\newcommand{\norm}[1]{\left\Vert#1\right\Vert}
\newcommand{\R}{\mathbb{R}}
\newcommand{\im}{\mathrm{Im}\,}         %image of a function
\newcommand{\lie}[1]{\mathfrak{#1}}     %Lie algebras
\newcommand{\h}{\mathbb{H}}
\newcommand{\C}{\mathbb{C}}
\newcommand{\hook}{\lrcorner\,}
\newcommand{\Sp}{\mathrm{Sp}}
\newcommand{\SO}{\mathrm{SO}}
\newcommand{\so}{\mathfrak{so}}
\newcommand{\GL}{\mathrm{GL}}
\newcommand{\dfn}[1]{\emph{#1}}
\newcommand{\id}{\mathrm{Id}}   % the identity
\newcommand{\gl}{\lie{gl}}
\newcommand{\Span}[1]{\operatorname{Span}\left\{#1\right\}}
\DeclareMathOperator{\tr}{tr}
\DeclareMathOperator{\End}{End}
\DeclareMathOperator{\Hom}{Hom}
\DeclareMathOperator{\ad}{ad}
\DeclareMathOperator{\Ad}{Ad}
\DeclareMathOperator{\coker}{Coker}
\DeclareMathOperator{\ric}{Ric}
\theoremstyle{plain}
\newtheorem{proposition}{Proposition}
\newtheorem{theorem}[proposition]{Theorem}
\newtheorem{lemma}[proposition]{Lemma}
\newtheorem{corollary}[proposition]{Corollary}
\theoremstyle{definition}
\newtheorem{definition}[proposition]{Definition}
\newtheorem{example}[proposition]{Example}
\theoremstyle{remark}
\newtheorem*{remark}{Remark}
\title{Intrinsic torsion in quaternionic contact geometry}
\author{Diego Conti}
\begin{document}

\maketitle
\begin{abstract}
We investigate quaternionic contact (qc) manifolds from the point of view of intrinsic torsion. We argue that the natural structure group for this geometry is a non-compact Lie group $K$ containing $\Sp(n)\h^*$, and show that  any qc structure gives rise to a canonical $K$-structure with constant intrinsic torsion, except in seven dimensions, when this condition is equivalent to integrability in the sense of Duchemin.

We prove that the choice of a reduction to $\Sp(n)\h^*$ (or equivalently, a complement of the qc distribution) yields a unique $K$-connection satisfying natural conditions on torsion and curvature.

We show that the choice of a compatible metric on the qc distribution determines a canonical reduction to $\Sp(n)\Sp(1)$ and a canonical $\Sp(n)\Sp(1)$-connection whose curvature is almost entirely determined by its torsion. We show that its Ricci tensor, as well as the Ricci tensor of the Biquard connection,  has an interpretation in terms of intrinsic torsion.
\end{abstract}

\renewcommand{\thefootnote}{\fnsymbol{footnote}} 
\footnotetext{\emph{MSC class}: 53C26, 53C10, 53C17}
\renewcommand{\thefootnote}{\arabic{footnote}} 

Quaternionic contact geometry was introduced by Biquard in \cite{Biquard}; its model is the sphere, viewed as the conformal infinity of  quaternionic hyperbolic space. A quaternionic contact (qc) structure is  canonically defined on any $3$-Sasakian manifold, and on more general classes of hypersurfaces in quaternionic manifolds; see \cite{Duchemin:hypersurfaces,IvanovMinchevVassilev}. Explicit examples on Lie groups are also known (see \cite{deAndres:Quaternionic,ContiFernandezSantisteban:qc}). Aside from the link with quaternionic-k\"ahler geometry, a  motivating aspect of qc geometry is the presence of a conformal class of subriemannian metrics for which the Yamabe problem can be studied (see \cite{IvanovMinchevVassilev:Extremals}).

A qc structure on a manifold of dimension $4n+3$ is defined as a distribution of rank $4n$ that can locally  be written as $\ker \eta_1\cap \ker\eta_2\cap\ker\eta_3$, where the $2$-forms $d\eta^s$ define an almost quaternion Hermitian metric on the distribution at each point. Whilst the metric on the distribution is not fixed, its conformal class is determined.  Qc geometry is generally studied by fixing a metric in this class; when $n>1$, this determines a unique Riemannian metric and a metric connection, called  the Biquard connection \cite{Biquard}. For $n=1$,  a similar result holds; however, the resulting connection only has the same features as the Biquard connection when the qc structure is integrable in the sense of Duchemin \cite{Duchemin}.

These connections are defined by torsion conditions which make them unique. The definition of the Riemannian metric involves the choice of a complement to the qc distribution, which is characterized by the existence of a compatible connection with the required torsion conditions. 
One is led to wonder to which extent  the choice of these conditions is canonical.
The literature shows the geometric significance of the curvature of the Biquard connection, and in particular of its Ricci tensor: for instance, it was shown in \cite{IvanovMinchevVassilev,IvanovMinchevVassilev:qcEinstein} that the traceless  Ricci is zero precisely when the qc structure is  $3$-Sasakian up to local homothety, and a Lichnerowicz-type result involving the  Ricci tensor was obtained in \cite{IvanovPetkovVassilev:sharp}. Further, the scalar curvature used in the study of the Yamabe problem is the trace of this Ricci tensor. It is natural to ask  why and whether the Biquard connection and its Ricci tensor are canonical objects of qc geometry.
\smallskip

This paper uses the language of special geometries: namely, of $G$-structures whose intrinsic torsion is partially prescribed.
The intrinsic torsion of a $G$-structure is a tensor representing the first order obstruction to its flatness; it is obtained from the torsion of any connection via a projection to $\coker\partial_G$, where
\[\partial_G\colon (\R^{4n+3})^*\otimes\lie{g}\to\Lambda^2(\R^{4n+3})^*\otimes \R^{4n+3}\]
is induced by the inclusion $\lie{g}\subset  (\R^{4n+3})^*\otimes \R^{4n+3}$. A qc structure cannot be flat in the sense of $G$-structures, for this would make the distribution integrable in the sense of the Frobenius theorem; the type of condition that we will consider is that the  intrinsic torsion take values  in a fixed  $G$-invariant subspace of $\coker\partial_G$.

Understanding what the group $G$ should be is one of the goals of this paper.  As a first step, letting $Q$ be the stabilizer of a point in the Grassmannian of $4n$-planes in $\R^{4n+3}$, we show that  a qc distribution consists in  a $Q$-structure with intrinsic torsion taking values in the orbit $Q\cdot \Theta_0^Q$, where $\Theta_0^Q$ is a distinguished element in $\coker\partial_Q$. The fact that we are dealing with a single $Q$-orbit suggests that a qc $Q$-structure has a canonical reduction, determined by the intrinsic torsion itself. We are led to consider a smaller structure group $B$, namely the stabilizer in $Q$ of $\Theta_0^Q$, obtaining a notion of qc $B$-structure.

The definition gives obvious constraints on the intrinsic torsion of a qc $B$-structure, which we refine using the Bianchi identity and a  calculation involving certain equivariant linear maps. We show that a qc $B$-structure takes values in $B\cdot \Theta_0^B$ when $n>1$; if $n=1$, this is only true up to a $12$-dimensional, irreducible representation of $\SO(4)$, denoted by $S^{5,1}$ in \cite{Duchemin}. In fact, in the course of the paper we prove that this component of the intrinsic torsion is zero if and only if the structure is integrable in the sense of \cite{Duchemin} (an empty condition for $n>1$).

Restricting now to the integrable case, we can repeat the argument and further reduce the structure group to the stabilizer of $\Theta_0^B$, which has the form
\[K=\Sp(n)\h^*\ltimes (\R^{4n})^*,\]
where $(\R^{4n})^*$ is the canonical representation of $\Sp(n)\h^*$ considered in quaternionic geometry, acting as a subgroup of $\Hom(\R^3,\R^{4n})$. Using the Bianchi identity again, we prove that the intrinsic torsion of an integrable qc $K$-structure is constant. In other words, the corresponding $K$-orbit of $\coker\partial_K$ contains a single point, and we cannot repeat the usual procedure. We take this as evidence of the fact that $K$ is the ``natural'' structure group of qc geometry. We emphasize that a $K$-structure does not involve either the choice of a metric on the qc distribution nor of a complement.

The fact that the intrinsic torsion is constant suggests that there could be a canonical connection with constant torsion, obtained by inverting the map
\[\partial_K\colon (\R^{4n+3})^*\otimes\lie{k}\to \im\partial_K.\]
This cannot be done for two reasons:  $\partial_K$ is not injective, so the torsion condition does not define a unique connection, and  secondly $K$ is not reductive, so $\partial_K$ does not even have a $K$-equivariant right inverse. Nonetheless, we are able to show that a canonical connection exists on any $\Sp(n)\h^*$-reduction of a qc $K$-structure; it is characterized by having constant torsion and satisfying natural conditions on the curvature. It will be called the qc connection.

The torsion condition gives strong restrictions on the curvature via the Bianchi identity. A long computation with highest weight vectors allows us to compute the space in which the curvature takes values. This curvature is a stronger invariant than the qc conformal curvature tensor introduced in \cite{IvanovMinchevVassilev}, since it obstructs the existence of a local diffeomorphism with the Heisenberg group that preserves not only the qc distribution, but also the choice of a complement.

In the last part of the paper, we consider integrable qc structures with  a fixed compatible metric on the associated distribution. These can be characterized as $\Sp(n)\Sp(1)$-structures satisfying an intrinsic torsion condition; we refer to them as  quaternionic-contact metric (qcm) structures.

The qc connection has a metric analogue that we call the qcm connection; the two are related via a projection, and  the results on the curvature of the former  carry over to the latter. In fact, the curvature is entirely determined by the intrinsic torsion and its covariant derivative, except for the component $S^4E\subset \lie{sp}(n)\otimes\lie{sp}(n)$, which correponds to the curvature space of hyperk\"ahler manifolds of dimension $4n$.  This leads to a new proof of a result of \cite{IvanovVassilev}, relating closedness of the fundamental four-form to the vanishing of the traceless Ricci tensor for $n>1$.

We show that the intrinsic torsion of a qcm structure consists of three components: one is a  symmetric  tensor which can be identified with the Ricci tensor, one obstructs the integrability of the complement (or ``vertical'' distribution), and one is trivial, determined by the definitions. 

Comparing our results with the literature, we recover the existence of both the Biquard and Duchemin connection on integrable qcm manifolds of arbitrary dimensions. We show that the ``horizontal'' part of the curvature of the Biquard connection is determined linearly by the qcm curvature and torsion; this indicates that formulae using this part of the curvature can  be equally expressed in terms of the qcm curvature. In particular, we prove that the Ricci tensor of the Biquard connection can also be identified with the symmetric part of the intrinsic torsion.

\section{Representations of $\Sp(n)\Sp(1)$}
\label{sec:representations}
The structure group $\Sp(n)\Sp(1)$ plays a central r\^ole in qc geometry; whilst its representation theory is well understood (see  \cite{Swann:Aspects,Salamon:QuaternionicManifolds}), it will be useful to write down some explicit formulae for use in subsequent computations.

Consider the usual inclusion $\Sp(n)\subset\Sp(2n,\C)$
obtained by identifying $\h^n$ with $\C^{2n}$ in such a way that multiplication on the left by $j$ is $\C$-linear. This inclusion induces an identification of $\lie{sp}(2n,\C)$ with the complexification of $\lie{sp}(n)$.

Denoting by $E_{ij}$ the elementary matrix with $1$ at the entry $(i,j)$, the Cartan subalgebra of $\lie{sp}(n)$ is given as
\[\Span{H_1,\dotsc, H_n}, \quad H_k=iE_{kk}\]
and it maps to the standard Cartan subalgebra of $\lie{sp}(2n,\C)$ by
\[H_k \to i(E_{kk}-E_{n+k,n+k}).\]
The weight lattice of the latter is generated by
\[L_k\colon\lie{h}^*\to \C, \quad L_k(H_j)=i\delta_{kj}.\]
We shall denote by $E$ the standard representation 
\[E=\C^{2n}=\Span{v_1,\dotsc, v_{2n}}\]
of $\lie{sp}(2n,\C)$, so that $v_i$ has weight $L_i$ and $v_{n+i}$ has weight $-L_i$. 

The second factor of the product $\Sp(n)\Sp(1)$ has $\lie{sp}(1)$ as its Lie algebra. We shall fix a generator $M$ of its weight lattice with $M(H_1)=-i$, and denote by 
\[H=\C^2=\Span{h_1,h_2}\]
 the standard representation, so that $h_1$ has weight $-M$ and $h_2$ has weight $M$. We can think of $H$ as the $\lie{sp}(1)$-representation given by left multiplication on the quaternions, the identification being given by 
\[a+jb\to ah_1+b h_2, \quad a,b\in\C.\]

We fix the standard ordering for the roots of $\lie{sp}(n)$, and declare $M$ to be positive. This is summarized in Tables~\ref{table:spnroots} and~\ref{table:sp1roots}, which also contain a generator for each root space.
\begin{table}[tb]
\caption{Roots of $\lie{sp}(n)$}
 \label{table:spnroots}
\[\begin{array}{l|l|l|l}
\lie{sp}(2n,\C)&\C\otimes\lie{sp}(n)&\text{root}&>0\\
   \hline
E_{i,n+j}+E_{j,n+i}  & -\frac12(E_{ij}+E_{ji})j+\frac12i\otimes (E_{ij}+E_{ji})k & L_i+L_j,  i\neq j &\text{yes}\\
E_{i,n+i}	&   -\frac12E_{ii}j+\frac12i\otimes E_{ii}k &  2L_i&\text{yes}\\
E_{i,j}-E_{n+j,n+i} & \frac12(E_{ij}-E_{ji})-\frac12i\otimes (E_{ij}+E_{ji})i & L_i-L_j&  i< j\\
E_{n+i,j}+E_{n+j,i}  & \frac12(E_{ij}+E_{ji})j+\frac12i\otimes (E_{ij}+E_{ji})k & -L_i-L_j,  i\neq j & \text{no}\\
E_{n+i,i}	&   \frac12E_{ii}j+\frac12i\otimes E_{ii}k &  -2L_i&\text{no}\\
  \end{array}\]
\end{table}
\begin{table}[tb]
\caption{Roots of $\lie{sp}(1)$}
 \label{table:sp1roots}
\[\begin{array}{l|l|l|l}
\lie{sp}(2,\C)&\C\otimes \lie{sp}(1)&\text{root}&>0\\
   \hline
E_{1,2}	&   -\frac12j+\frac12i\otimes k &  -2M&\text{no}\\
\hline
E_{2,1}	&   \frac12j+\frac12i\otimes k &  2M&\text{yes}\\
  \end{array}\]
\end{table}
The isomorphism $\R^4=\h$ determined by the standard basis $\{1,i,j,k\}$ can be extended to an identification $\R^{4n}=\C^{2n}=\h^n$ via
\[e_{4(j-1)+1}=v_j, \quad e_{4(j-1)+2}=iv_j, \quad e_{4(j-1)+3}=v_{n+j}, \quad e_{4(j-1)+4}=-iv_{n+j}.\]
Here $\{e_1,\dotsc, e_{4n}\}$ is the standard basis of $\R^{4n}$; the dual basis will be denoted by $e^1,\dotsc, e^{4n}$. This induces a representation of $\Sp(n)\h^*$, and hence of its subgroup $\Sp(n)\Sp(1)$,  via
\[(\Sp(n)\times \h^*)\times \h^n\to\h^n, \quad (g,p)\cdot v= gvp^{-1}.\]

This is a real representation whose complexification is well known to be isomorphic to $E\otimes H$, or $EH$ (we shall omit tensor products signs in this context). By choosing a highest weight vector in both representations and applying subsequently negative roots to both sides, one obtains the explicit isomorphism of Table~\ref{table:isowithEH}.
\begin{table}
 \caption{Isomorphism of $\h^n\otimes\C$ with $EH$}
\label{table:isowithEH}
\[\begin{array}{l|l|l}
\R^{4n}\otimes\C & EH & \text{weight}\\
\hline
e_{4(j-1)+1}-ie_{4(j-1)+2}&  v_jh_2& L_j+M\\
e_{4(j-1)j+1}+ie_{4(j-1)+2}&  -v_{n+j}h_1 & -L_j-M\\
e_{4(j-1)+3}-ie_{4(j-1)+4}&  v_j h_1 & L_j-M\\
e_{4(j-1)+3}+ie_{4(j-1)+4}&  v_{n+j} h_2 & -L_j+M
\end{array}\]
\end{table}
Since $\R^{4n}$ is isomorphic to its dual via $e_i\to e^i$, we shall also identify $EH$ with $(\R^{4n})^*$ as an $\Sp(n)\Sp(1)$-module. Thus, we will represent both 
$e_{1}-ie_{2}$ and $e^{1}-ie^{2}$ by $v_1h_2$; this has the consequence that $v_1h_2\hook v_1h_2=0$, but $v_1h_2\hook v_{n+1}h_1=-2$.
\smallskip

We also identify $\Lambda^2(\R^{4n})^*\cong \so(4n)$ by making $\Lambda^2(\R^{4n})^*$ act on $\R^{4n}$ as 
\[\Lambda^2(\R^{4n})^*\times \R^{4n}\to\R^{4n}, \quad (\alpha, v)\to v\hook\alpha.\]
In other words, $e^{i}\wedge e^j$ is identified with $e^i\otimes e_j-e^j\otimes e_i=E_{ji}-E_{ij}$.
Then the action of $\h^*$ on $\R^{4n}$ induces the Lie algebra homomorphism
\begin{equation}
\label{eqn:sp1actionasforms}
\begin{gathered}
Lie(\h^*)=\lie{sp}(1)\oplus\R \to \Span{\id}\oplus\lie{so}(4n), \\
 1\to -\id,\quad i\to -\omega_1,\quad j\to-\omega_2,\quad k\to-\omega_3,
 \end{gathered}
\end{equation}
where (as in \cite{redbook}) the $\omega_s$ satisfy
\begin{gather*}
\omega_1=\frac12 i(v_jh_2\wedge v_{n+j}h_1 + v_jh_1\wedge v_{n+j}h_2),\\
\omega_2+i\omega_3= v_jh_2\wedge v_{n+j}h_2, \quad
\omega_2-i\omega_3=v_jh_1\wedge v_{n+j}h_1.
\end{gather*}
Here and in the sequel, $e^{ij}$ or $e^{i,j}$ stands for the wedge product $e^i\wedge e^j$, and summation over double indices is implied.

It is easy to deduce  that the subspace
\[\Span{\omega_1,\omega_2,\omega_3}\subset \Lambda^2(\R^{4k})^*\]
is fixed under $\h^*$ action; in fact, it is isomorphic to  $\im\h=\lie{sp}(1)$ as an $\h^*$-module via \eqref{eqn:sp1actionasforms}, where $\h^*$ acts on $\im \h$ via
\[\widetilde{\Ad}\colon\h^*\to\End_\R(\im \h), \quad \widetilde{\Ad}(p) q=pq\overline{p}.\]

The tensor product of  two complex representations of $\Sp(n)$ and $\Sp(1)$ has a real structure when both factors have a real or quaternionic structure; all real representations of $\Sp(n)\Sp(1)$ can be written in this way. We denote by $\Lambda^k_0E$ the irreducible representation of $\Sp(n)$ with highest weight $L_1+\dotsc + L_k$, and more generally let $V_{l_1,\dots, l_k}$ be the irreducible representation with highest weight $l_1L_1+\dots + l_kL_k$. It will be understood that $\Lambda^k_0E$ and $V_{l_1,\dots, l_k}$ represent the zero vector space when $n<k$. We have the following
decompositions (see  \cite{Swann:Lancaster}):
\begin{gather*}
 S^2E\otimes E = S^3E\oplus E \oplus V_{21}, \quad \Lambda^2_0E\otimes E = \Lambda^3_0E\oplus E \oplus V_{21},\\
\Lambda^3(EH)=\begin{cases}\Lambda^3_0E S^3H+V_{21}H+E(S^3H+H)& n>1 \\ EH & n=1\end{cases}.
\end{gather*}
We shall also need:
\begin{lemma}
The following isomorphisms of  $\Sp(n)$-modules hold:
\begin{gather*}
\Lambda^2_0E\otimes S^2 E  = V_{31}+V_{211}+\Lambda^2_0E + S^2 E ,\\
S^2E\otimes S^2E = S^4E+V_{31}+V_{22}+S^2E+\Lambda^2_0E+\R.
\end{gather*}
 \end{lemma}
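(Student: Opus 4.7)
The plan is to reduce both identities to standard $\Sp(n,\C)$ tensor-product combinatorics. Translating notation, one has $S^2E = V_2$ and $\Lambda^2_0E = V_{11}$, so that both decompositions are Clebsch--Gordan type computations for the symplectic group. I would use the Newell--Littlewood formula
\[V_\lambda\otimes V_\mu = \bigoplus_\nu\Bigl(\sum_{\delta,\sigma,\tau} c^\lambda_{\delta\sigma}\, c^\mu_{\delta\tau}\, c^\nu_{\sigma\tau}\Bigr) V_\nu,\]
in which the $c$'s are the ordinary Littlewood--Richardson coefficients and $\delta$ (which encodes contractions with the symplectic form) runs over all partitions. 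In the stable range this is exact; for small $n$ the paper's convention $V_{l_1,\dots,l_k}=0$ whenever $k>n$ absorbs the usual modification rules and keeps the formula valid in all dimensions.

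For $V_{11}\otimes V_2$ the only non-trivial contributions come from $\delta=\emptyset$ and $\delta=(1)$, since any larger $\delta$ must satisfy $\delta\subset[11]$ and $\delta\subset[2]$, forcing $\delta\subset(1)$. The term $\delta=\emptyset$ reduces to the ordinary $\GL$ product and equals $V_{31}\oplus V_{211}$ by Pieri, while $\delta=(1)$ contracts one box from each factor and yields $V_1\otimes V_1 = V_2\oplus V_{11}$. Summing gives the first identity. For $V_2\otimes V_2$ the contributing contractions are $\delta=\emptyset$ (contributing the symmetric $\GL$-product $V_4\oplus V_{31}\oplus V_{22}$), $\delta=(1)$ (contributing $V_2\oplus V_{11}$), and $\delta=(2)$ (contributing the trivial summand $\R$ from the pair of empty partitions). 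Assembling yields the second identity.

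The computation is largely bookkeeping; the only real obstacle is confirming that no further $\delta$ contributes, which is immediate once one observes that $c^\lambda_{\delta\sigma}$ vanishes whenever $\delta\not\subset\lambda$, so the sum is automatically finite and small. As a sanity check, I would verify the dimensions by Weyl's formula in the first genuinely stable case $n=3$, where no summand is zero, and independently exhibit explicit highest-weight vectors for each candidate summand: for instance $(v_1\wedge v_2)\otimes v_1^2\in\Lambda^2_0E\otimes S^2E$ has weight $3L_1+L_2$ and is annihilated by all positive root generators of Table~\ref{table:spnroots}, giving the $V_{31}$ factor directly, and analogous constructions using contractions with the symplectic form produce the lower summands.
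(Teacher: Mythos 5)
Your computation is correct and reaches the stated decompositions, but by a genuinely different route from the paper. The paper's proof is a two-sided squeeze: it exhibits an explicit highest weight vector for each summand on the right-hand side (giving a lower bound on multiplicities) and then invokes the Weyl Character Formula to check that the dimensions add up (giving the upper bound). Your Newell--Littlewood argument instead computes the full decomposition, with multiplicities, in one pass: the sum over contractions $\delta\subset(1)$ (resp.\ $\delta\subset(2)$) is exactly as you describe, Pieri gives $s_{(1,1)}s_{(2)}=s_{(3,1)}+s_{(2,1,1)}$ and $s_{(2)}s_{(2)}=s_{(4)}+s_{(3,1)}+s_{(2,2)}$, and the assembled answers match the lemma. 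What your approach buys is that no dimension count is needed and the multiplicity-one statements come for free; what it costs is reliance on a combinatorial theorem that is only literally valid in the stable range $\ell(\lambda)+\ell(\mu)\le n$, i.e.\ $n\ge 3$ for the first identity and $n\ge 2$ for the second. The paper's method, by contrast, uses only the explicit weight bases already set up in its Section~1 and a closed dimension formula, so it is more self-contained, at the price of being pure bookkeeping.

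The one step I would not let stand as written is the claim that the paper's convention $V_{l_1,\dots,l_k}=0$ for $k>n$ ``absorbs the usual modification rules.'' The King modification rules for $\Sp(2n)$ are not simple vanishing statements: out-of-range partitions can contribute with signs or be identified with smaller partitions, and whether the naive zero convention gives the right answer has to be checked case by case. Here it does work for $n=2$ (e.g.\ $5\cdot 10=35+0+5+10$ and $10\cdot 10 = 35+35+14+10+5+1$), but for $n=1$ the first identity is degenerate ($\Lambda^2_0E=0$, while the right-hand side still contains $S^2E$) --- a defect already present in the paper's own statement, and harmless in its applications, but one your argument should acknowledge rather than wave away. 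Since you already propose the dimension check via Weyl's formula as a sanity check, the cleanest fix is to promote it to the actual argument for $n\le 2$ and reserve Newell--Littlewood for the stable range.
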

\begin{proof}
The fact that each module on the right hand side appears in the tensor product can be shown by exhibiting a highest weight vector. Moreover, the Weyl Character Formula (see (24.19) in \cite{FultonHarris}) gives
\begin{gather*}
\dim V_{211} = \frac12(n+1)(2n+1)(2n-1)(n-2), \quad 
\dim V_{l,1}=\frac{l(2n-2)}{l+2n-1}\binom{l+2n}{l+1},\\
\dim V_{l,2}=\frac{(l^2+2ln-2n-1)}{2}\binom{l+2n-2}{l+1}.
\end{gather*}
It is now a matter of showing that dimensions add up.
\end{proof}

\section{Distributions as $G$-structures}
\label{sec:gstructures}
In this section we show that qc structures can be characterized in terms of intrinsic torsion using the language of  $G$-structures. The structure group considered in this section, denoted by $Q$, is  inherent in the definition of qc structures, but we will see in later sections that smaller structure groups capture the geometry more completely.

A \dfn{qc structure} on a manifold of dimension $4n+3$ is  a distribution $\mathcal{D}$ of rank $4n$ which can locally be defined as 
\[\mathcal{D}=\ker \eta_1\cap \ker \eta_2\cap \ker \eta_3,\]
where the $\eta_s$ are one-forms such that  $(d\eta_1,d\eta_2,d\eta_3)$ restricted to $\mathcal{D}$ are compatible with an almost quaternion Hermitian metric. The latter condition can be rephrased by requiring the existence   at each point $x$ of a frame
\[u\colon \R^{4n}\to \mathcal{D}_x, \quad d\eta_s(u(e_a),u(e_b))=\omega_s(e_a,e_b),\]
where  the $\omega_s$ are as in Section~\ref{sec:representations}. The qc structure is said to be \dfn{integrable} (in the sense of Duchemin) if in addition at each point there are vectors $R_s$ such that
\[(R_s\hook d\eta^r)|_{\mathcal{D}}+(R_r\hook d\eta^s)|_{\mathcal{D}}=0;\]
this condition turns out to be automatic for $n>1$ (see \cite{Duchemin}). Notice that this is not related to integrability of the distribution $\mathcal{D}$, nor to integrability in the sense of $G$-structures.

Whether qc or not, a codimension three distribution can be viewed as a $Q$-structure, where 
\[Q=\GL(4n,\R)\times\GL(3,\R)\ltimes\Hom(\R^3,\R^{4n})\] 
is the stabilizer of a point in the Grassmannian of $4n$-planes in $\R^{4n+3}$.
We shall denote by $T$ the $Q$-module obtained by letting $Q$ act on $\R^{4n+3}$ via
\[(g,p,h)\colon \begin{pmatrix}v\\ w\end{pmatrix} \to \begin{pmatrix} gv+h(p(w))\\ p(w)\end{pmatrix},\]
denoting by  $e_1,\dotsc, e_{4n+3}$ the standard basis of $T$, and by $e^1,\dotsc, e^{4n+3}$ the dual basis of $T^*$.

We shall write 
\[T=\R^{4n}\oplus\R^3=V\oplus W;\]
notice that $V$ and $W^*$ are $Q$-submodules of $T$ and $T^*$ respectively, but $W$ and $V^*$, despite having a natural $Q$-module structure, are only $\GL(4n,\R)\times\GL(3,\R)$-submodules. On the other hand, $T/V$ is a $Q$-module isomorphic to $W^{**}$.
It will be convenient to denote by $w_s$ the image in $T/V$ of $e_{4n+s}$, and the $e^{4n+s}$ by $w^s$. We shall use the contracted notation $w^{rs}$, $w_{rs}$ for the wedge product of these elements as well.

The intrinsic torsion of a $Q$-structure takes values in a $Q$-module defined as the cokernel of the map
\[\partial_Q\colon T^*\otimes\lie{q}\to \Lambda^2T^*\otimes T\]
obtained by restriction from the map
\[\partial\colon T^*\otimes\lie{gl}(T)\to \Lambda^2T^*\otimes T, \quad e^k\otimes (e^i\otimes e_j)\to e^{ki}\otimes e_j.\]
\begin{lemma}
 The alternating map $\partial_Q$ 
fits into the exact sequence of $Q$\nobreakdash-modules
\[0\to S^2T^*\otimes V+S^2W^*\otimes T\to T^*\otimes\lie{q}\xrightarrow{\partial_Q} \Lambda^2T^*\otimes T\to \Lambda^2V^*\otimes\frac TV\to 0.\]
\end{lemma}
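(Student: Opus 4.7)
The plan is to verify the two ends of the stated sequence separately, using the basic short exact sequence
\[0\to S^2T^*\otimes T\to T^*\otimes\lie{gl}(T)\xrightarrow{\partial}\Lambda^2T^*\otimes T\to 0,\]
where $\partial$ alternates the first two tensor factors, combined with the observation that inside $T^*\otimes T\cong\lie{gl}(T)$ the subalgebra $\lie q$ consists of endomorphisms preserving $V$, equivalently
\[\lie q=T^*\otimes V+W^*\otimes T,\]
with $W^*\subset T^*$ the annihilator of $V$. Relative to an auxiliary splitting $T=V\oplus\tilde W$ with $\tilde W\cong T/V$ this refines to a direct sum decomposition $\lie q=(T^*\otimes V)\oplus(W^*\otimes\tilde W)$, the two summands being distinguished by the image in $T$.

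For the kernel, I would take an arbitrary $x\in\ker\partial_Q=(T^*\otimes\lie q)\cap(S^2T^*\otimes T)$ and use this direct sum to split $x=x_V+x_{\tilde W}$ according to the third factor. The piece $x_V$ lies in $T^*\otimes T^*\otimes V$, and its symmetry in the first two factors places it in $S^2T^*\otimes V$. The piece $x_{\tilde W}$ is forced by the description of $\lie q$ to sit in $T^*\otimes W^*\otimes\tilde W$; combined with symmetry in the first two factors and the elementary identity $S^2T^*\cap(T^*\otimes W^*)=S^2W^*$, this yields $x_{\tilde W}\in S^2W^*\otimes\tilde W\subset S^2W^*\otimes T$. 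The reverse inclusion of the whole left-hand space into $\ker\partial_Q$ is immediate from $T^*\otimes V\subset\lie q$, $W^*\otimes T\subset\lie q$ and $S^2T^*\subset\ker\partial$.

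For the cokernel, I would compute $\partial(T^*\otimes\lie q)$ directly from $\lie q=T^*\otimes V+W^*\otimes T$. Using $\partial(\gamma\otimes\beta\otimes u)=(\gamma\wedge\beta)\otimes u$, the first summand produces all of $\Lambda^2T^*\otimes V$, while the second produces $(T^*\wedge W^*)\otimes T=(V^*\wedge W^*+\Lambda^2W^*)\otimes T$. Against the bigraded decomposition
\[\Lambda^2T^*\otimes T=(\Lambda^2V^*+V^*\wedge W^*+\Lambda^2W^*)\otimes(V\oplus\tilde W),\]
these two pieces collectively exhaust every summand except $\Lambda^2V^*\otimes\tilde W$, yielding the announced cokernel $\Lambda^2V^*\otimes(T/V)$. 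No step is especially delicate; the whole argument is bookkeeping against the bigradings of $T^*$ and $T$, and the only mild subtlety is that the auxiliary splitting $T=V\oplus\tilde W$ is not $Q$-equivariant, though this is harmless since the statement asserts only an equality of vector spaces.
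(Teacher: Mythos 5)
Your proof is correct and follows essentially the same route as the paper's: both reduce the claim to linear-algebra bookkeeping against the $V$/$W$ bigradings of $T^*$ and $T$, using the block description of $\lie{q}$ as endomorphisms preserving $V$. The paper organizes the computation into six $\GL(4n,\R)\times\GL(3,\R)$-blocks (three isomorphisms and three short exact sequences), whereas you group $\lie{q}$ as $T^*\otimes V+W^*\otimes T$ and intersect with $S^2T^*\otimes T$ directly --- a slightly more compact packaging of the same argument.
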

\begin{proof}
Decomposing $T^*\otimes\lie{q}$ into $\GL(4k,\R)\times\GL(3,\R)$-submodules, $\partial_Q$ determines three isomorphisms
\begin{gather*}
 W^*\otimes \lie{gl}(4n,\R) \xrightarrow{\cong}  W^*\otimes V^*\otimes V, \quad
V^*\otimes \lie{gl}(3,\R) \xrightarrow{\cong} W^*\otimes V^*\otimes W,\\
 V^*\otimes \Hom(\R^3,\R^{4n})  \xrightarrow{\cong} W^*\otimes V^*\otimes V, 
\end{gather*}
and three exact sequences
\begin{gather*}
0 \to S^2V^*\otimes V \to V^*\otimes \lie{gl}(4k,\R) \xrightarrow{\partial_Q} \Lambda^2V^*\otimes V\to 0\\
0\to S^2W^*\otimes W\to  W^*\otimes \lie{gl}(3,\R)  \xrightarrow{\partial_Q} \Lambda^2W^*\otimes W\to 0\\
0\to S^2W^*\otimes V\to  W^*\otimes \Hom(\R^3,\R^{4k})   \xrightarrow{\partial_Q} \Lambda^2W^*\otimes V\to 0
\end{gather*}
Thus, the cokernel of  $\partial_Q$ is $\Lambda^2V^*\otimes W$, and the kernel is
\[S^2V^*\otimes V+S^2W^*\otimes T + W^*\otimes V^*\otimes V,\]
which is a $Q$-submodule of $S^2T^*\otimes T$ that can be written as the sum of the two  submodules $S^2T^*\otimes V$ and $S^2W^*\otimes T$ (intersecting non-trivially).
 \end{proof}

This shows that the intrinsic torsion is a map
\[\Theta^Q\colon P\to \Lambda^2V^*\otimes \frac{T}{V}.\]
We can now characterize qc geometry as follows:
\begin{proposition}
\label{prop:qcGstructures}
The distribution associated to a $Q$-structure is integrable if and only if $\Theta^Q$ is identically zero. 
It is qc if and only if $\Theta^Q$ takes values in the $Q$-orbit of 
\[\omega_1\otimes w^{23} + \omega_2\otimes w^{31} + \omega_3\otimes w^{12}.\]
\end{proposition}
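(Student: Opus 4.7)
The plan is to interpret $\Theta^Q$ concretely as the Frobenius/Levi form of $\mathcal{D}$, after which both assertions fall out of the definitions.

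First I would compute $\Theta^Q$ in a local adapted coframe. Choose locally one-forms $\eta^1,\eta^2,\eta^3$ with $\mathcal{D}=\ker\eta^1\cap\ker\eta^2\cap\ker\eta^3$ and complete to a coframe $(e^1,\ldots,e^{4n},\eta^1,\eta^2,\eta^3)$; this is a local section $\sigma$ of the $Q$-structure $P$. Because $\lie{q}$ stabilises $V\subset T$, the connection matrix $\omega^i_j$ of any $Q$-connection $\omega$ has vanishing lower-left block in the splitting $T=V\oplus W^{**}$. Hence the structure equations $de^i = -\omega^i_j\wedge e^j+T^i$ yield
\[
(d\eta^s)\big|_{\mathcal{D}\otimes\mathcal{D}} \;=\; T^{4n+s}\big|_{\mathcal{D}\otimes\mathcal{D}}\qquad (s=1,2,3),
\]
and these are precisely the components of the torsion that survive the projection to $\coker\partial_Q = \Lambda^2V^*\otimes(T/V)$; the preceding lemma on $\ker\partial_Q$ certifies that this projection is well defined, i.e.\ independent of the auxiliary connection. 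Under the identification implicit in the statement, one therefore has
\[
\sigma^*\Theta^Q \;=\; (d\eta^1)|_{\mathcal{D}}\otimes w^{23} + (d\eta^2)|_{\mathcal{D}}\otimes w^{31} + (d\eta^3)|_{\mathcal{D}}\otimes w^{12}.
\]

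The first assertion is then immediate from the Frobenius theorem: $\mathcal{D}$ is involutive iff every $d\eta^s$ vanishes on $\mathcal{D}\otimes\mathcal{D}$, iff $\Theta^Q\equiv 0$.

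For the second assertion, the qc condition is by definition the requirement that at each point one can choose the $\eta^s$ and a frame $u\colon\R^{4n}\to\mathcal{D}_x$ with $u^*\bigl((d\eta^s)|_{\mathcal{D}}\bigr) = \omega_s$ for every $s$. By the coframe computation above, this is equivalent to the existence of $\sigma\in P_x$ at which $\Theta^Q(\sigma)$ equals the displayed element $\omega_1\otimes w^{23}+\omega_2\otimes w^{31}+\omega_3\otimes w^{12}$. Since any two adapted coframes at a point differ by a $Q$-element acting equivariantly on the cokernel, this is in turn equivalent to $\Theta^Q$ taking values in the $Q$-orbit of that element.

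The main obstacle I anticipate is the bookkeeping of the identifications: checking that the $Q$-equivariant projection of the torsion from $\Lambda^2T^*\otimes T$ onto $\Lambda^2V^*\otimes(T/V)$ matches the symbolic convention used in the statement, and that the freedom in redefining the $\eta^s$ (a $\GL(3,\R)$ change of basis together with an additive term in $V^*\otimes W$, both sitting inside $Q$) is absorbed exactly by the induced $Q$-action on the cokernel. Once the identifications are pinned down, both claims are formal.
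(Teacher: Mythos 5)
Your proposal is correct and follows essentially the same route as the paper: both identify the projection of the torsion to $\coker\partial_Q=\Lambda^2V^*\otimes(T/V)$ with the restriction of $d\eta^s$ to $\mathcal{D}$ (using that the connection term $\omega_W\wedge\theta_W$ dies on $\mathcal{D}$ because $\lie{q}$ preserves $V$), then invoke Frobenius for the first claim and the pointwise definition of qc plus $Q$-equivariance of $\Theta^Q$ for the second. The bookkeeping you flag as a concern is exactly what the paper's Lemma on $\ker\partial_Q/\coker\partial_Q$ settles, so no gap remains.
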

\begin{proof}
Let $P$ be a $Q$-structure on $M$. We can represent $\lie{q}$ as a space of block matrices, and decompose connection form, tautological form and torsion as
 \[\omega=\begin{pmatrix} \omega_V & * \\ 0 & \omega_W \end{pmatrix}, \quad \theta=\begin{pmatrix}\theta_V\\\theta_W\end{pmatrix}, \quad \Theta=\begin{pmatrix}\Theta_V\\ \Theta_W\end{pmatrix}.\] 
The $Q$-structure $P$ determines at each point $x\in M$ a projection 
\[h\colon \Lambda^2T^*_xM\to \Lambda^2 (\mathcal{D}_x)^*,\]
where $\mathcal{D}$ is the distribution determined by $Q$. Then, working with a local section $s$ of $P$, we can identify the intrinsic torsion with
\[h(s^*\Theta_W)= h(s^* (d\theta_W+\omega_W\wedge\theta_W))=h(s^*d\theta_W).\]
By the Frobenius theorem,  $\mathcal{D}$ is integrable if and only if  the ideal generated by $s^*\theta_{4n+1}$, $s^*\theta_{4n+2}$ and $s^*\theta_{4n+3}$ is a differential ideal; this is equivalent to 
\[h(s^*(d\theta_W))=0.\]
On the other hand,  $h(s^*\Theta_W)$ is zero if and only if $\Theta^Q$ is zero.

Similarly, for the second part of the statement, the qc condition is equivalent to 
\[h(s^*\Theta_W)=\omega_1\otimes w_{1}+\omega_2\otimes w_{2}+\omega_2\otimes w_{3}\]
for an appropriately chosen section $s$.
\end{proof}

A recurring phenomenon in the study of $G$-structures is that the intrinsic torsion is determined by the exterior derivative of some invariant forms. The structure group $Q$ has no invariant forms; there is, however, an invariant {\em vector-bundle-valued} differential form whose exterior covariant derivative determines the intrinsic torsion. 

This relation is best expressed in  the language of tensorial forms (see e.g. \cite{KobayashiNomizu}). If $P$ is a $Q$-structure and $S$ a $Q$-module, a differential form in $\Omega^k(P,S)$ is called pseudotensorial if it is invariant under the natural action of $Q$ on $\Omega^k(P,S)\cong\Omega^k(P)\otimes S$. It is called tensorial if in addition it is horizontal, i.e. the interior product with any fundamental vector field is identically zero.

Given a connection and a pseudotensorial form $\alpha$ in $\Omega^k(P,S)$, we denote by $D\alpha$ its exterior covariant derivative, as a tensorial $k+1$-form. In particular, if $\theta$ is the tautological form, $\Theta=D\theta$ is the torsion. To  any tensorial form $\alpha$ in $\Omega^k(P,S)$ one can associate an equivariant map 
\[\alpha_\theta\colon P\to \Lambda^kT^*\otimes S, \quad \bigl\langle \alpha_\theta,\frac1{k!}\theta\wedge\dotsm\wedge\theta\bigr\rangle=\alpha,\]
where the angle brackets represent the standard contraction 
\[\Lambda^kT^*\otimes\Lambda^kT\to\R, \quad \langle \eta^1\wedge\dots\wedge\eta^k,X_1\wedge\dots\wedge X_k\rangle = \det(\eta^i(X_j)).\]
With this choice of constants, if $G$ is the trivial group and $\alpha=\theta_1\wedge\dotsb\wedge\theta_k$, then $\alpha_\theta$ is the constant map $\alpha_\theta\equiv e^{1,\dots,k}$.

We shall denote by $\nabla\alpha$ the covariant derivative of $\alpha$, i.e.
\[ \nabla\alpha=D(\alpha_\theta)\in \Omega^1(P,\Lambda^kT^*\otimes S).\]
Given two tensorial forms $\alpha\in\Omega^{h}(P,T)$, $\beta\in\Omega^k(P,S)$, where $\alpha= \alpha^i\otimes e_i$, one can define the interior product $\alpha\hook\beta$ as the tensorial, $S$-valued $h+k-1$-form
\[ (\alpha\hook\beta)_u= \alpha_u^i\wedge (X_i\hook\beta_u), \quad \pi_{*u}(X_i)=u(e_i);\]
for $h=0$, this is the usual interior product. In this notation, if $\alpha$ is a tensorial $k$-form, then
\begin{equation}
 \label{eqn:Dalphanablaalpha}
D\alpha=\bigl\langle \nabla\alpha,\frac1{k!}\theta\wedge\dotsb\wedge\theta\bigr\rangle+\Theta\hook\alpha.
\end{equation}

The structure group $Q$ fixes a tensor
\[w^{123}\otimes w_{123}\in\Lambda^3W^*\otimes\Lambda^3(T/V)\subset\Lambda^3 T^*\otimes\kappa^*,\]
where we have set $\kappa=\Lambda^3W^*$.
Accordingly, on a $Q$-structure $P$ the associated tensorial $3$-form
\[\sigma\in\Omega^{3}(P,\kappa^*), \quad \sigma_\theta\equiv w^{123}\otimes w_{123}\]
is $Q$-invariant, hence parallel.

\begin{proposition}
\label{prop:dsigma22}
Fix a connection on a $Q$-structure $P$. Then the intrinsic torsion $\Theta^Q$
is given by the composition
\[P\xrightarrow{(D\sigma)_\theta} \Lambda^2T^*\wedge\Lambda^2W^*\otimes\kappa^*\xrightarrow{p} \Lambda^2V^*\otimes \Lambda^2W^*\otimes\kappa^*\xrightarrow{c}\Lambda^2V^*\otimes \frac{T}{V}\]
where $p$ is induced by the restriction map $\Lambda^2T^*\to\Lambda^2V^*$ and $c$ is the contraction induced by interior product.
\end{proposition}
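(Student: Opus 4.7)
The plan is to exploit the fact that $\sigma$ is parallel for \emph{any} $Q$\dash connection. Indeed, the tensor $w^{123}\otimes w_{123}$ is fixed by $Q$: the $\GL(3,\R)$\dash determinant factors on $\Lambda^3W^*$ and $\Lambda^3(T/V)$ cancel, while $\Hom(\R^3,\R^{4n})$ acts trivially on both $W^*$ and $T/V$. Hence the equivariant map $\sigma_\theta\equiv w^{123}\otimes w_{123}$ is constant, $\nabla\sigma=0$, and formula~\eqref{eqn:Dalphanablaalpha} collapses to the purely algebraic identity
\[D\sigma=\Theta\hook\sigma.\]

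Next I would unwind this interior product. Writing $\sigma=\theta^{4n+1}\wedge\theta^{4n+2}\wedge\theta^{4n+3}\otimes w_{123}$ and using $\theta^j(X_i)=\delta^j_i$, only the indices $i=4n+s$ contribute to $X_i\hook\sigma$; the three nonzero terms give, at the level of the equivariant map,
\[(D\sigma)_\theta=\sum_{s=1}^3(\Theta^{4n+s})_\theta\wedge w^{rt}\otimes w_{123},\]
where $(s,r,t)$ runs over the cyclic permutations of $(1,2,3)$. Each summand lies in $\Lambda^2T^*\wedge\Lambda^2W^*\otimes\kappa^*$, confirming the codomain of the first arrow in the proposition.

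To finish, I would apply $p$ and $c$. The projection $p$ keeps only the $\Lambda^2V^*$\dash block of each $(\Theta^{4n+s})_\theta$, leaving $w^{rt}\otimes w_{123}$ untouched; the contraction $c$ is the interior product $w^{rt}\hook w_{123}=w_s$ for $(r,t,s)$ cyclic. The composition therefore equals $\sum_s(\Theta^{4n+s})_\theta|_{\Lambda^2V^*}\otimes w_s$. This agrees with $\Theta^Q$ because, by the exact sequence of the preceding lemma, the canonical projection $\Lambda^2T^*\otimes T\to\Lambda^2V^*\otimes T/V$ is precisely restriction on the first factor composed with $e_{4n+s}\mapsto w_s$ (and $e_i\mapsto 0$ for $i\le 4n$) on the second.

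The only step needing care is the bookkeeping of cyclic signs in two places: the expansion of $X_{4n+s}\hook\theta^{4n+1,4n+2,4n+3}$ and the interior product $w^{rt}\hook w_{123}$. These fortunately combine to give matching cyclic conventions, so the final formula is manifestly symmetric in $(s,r,t)$; beyond that, the argument is linear and essentially immediate once parallelism of $\sigma$ is noted.
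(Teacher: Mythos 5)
Your proposal is correct and follows the same route as the paper: observe that $\sigma$ is parallel because $w^{123}\otimes w_{123}$ is $Q$-fixed, apply \eqref{eqn:Dalphanablaalpha} to get $D\sigma=\Theta\hook\sigma$, note that this forces $(D\sigma)_\theta$ to land in $\Lambda^2T^*\wedge\Lambda^2W^*\otimes\kappa^*$, and then check that $c\circ p$ recovers the projection of $\Theta$ to $\Lambda^2V^*\otimes T/V=\coker\partial_Q$. The only difference is that you spell out explicitly the contraction and sign bookkeeping that the paper dismisses as ``straightforward to verify,'' and your computation of the cyclic terms $\Theta^{4n+s}\wedge w^{rt}$ and of $w^{rt}\hook w_{123}=w_s$ is accurate.
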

\begin{proof}
By \eqref{eqn:Dalphanablaalpha}, since $\sigma$ is parallel,
\[\Theta\hook\sigma= D\sigma;\]
on the other hand $(\Theta\hook\sigma)_\theta$ is obtained from $\Theta_\theta\otimes\sigma_\theta$ via a contraction
\[(\Lambda^2T^*\otimes W)\otimes\Lambda^3W^*\otimes\kappa^*\to \Lambda^2T^*\wedge \Lambda^2W^*\otimes\kappa^*.\]
This shows that $(D\sigma)_\theta$ takes values in $\Lambda^2T^*\wedge\Lambda^2W^*\otimes\kappa^*$, so the composition appearing in the statement is well defined.

It is now straightforward to verify that  $c\circ p\circ (D\sigma)_\theta$ coincides with the projection to $\coker\partial_B$ of the torsion $\Theta$.
\end{proof}

This link between intrinsic torsion and tensorial forms is a recurrent feature of qc geometry; it will be used in Sections~\ref{sec:integrability} and \ref{sec:kintrinsictorsion} to prove vanishing conditions on the intrinsic torsion via the Bianchi identity.

\section{Examples}
\label{sec:example}
In this section we recall three explicit examples of qc structures, which will be used in the rest of the paper for reference. They can be seen as the ``space forms'' of qc geometry, corresponding to the case of positive, negative and zero scalar curvature.
\begin{example}
Consider the sphere as a homogeneous space $G/H$, where \[G=\Sp(n+1)\Sp(1), \quad H=\Sp(n)\Sp(1).\] The Lie algebra of $G$ is
\[\lie{g}=\biggl\{\biggl(\begin{pmatrix} a & b \\- \overline{b}^T & d\end{pmatrix} , q\biggr) \mid a\in\lie{sp}(n), b\in\h^n, d,q\in\im \h\biggr\},\]
and $\lie{h}$ is defined by $b=0$, $d=q$.

We set
\begin{gather*}
w_1=(iE_{nn},-i)\quad w_2=(jE_{nn},-j),\quad w_3=(kE_{nn},-k)\\
e_{4l+1}=-E_{n,l+1}+E_{l+1,n}, \quad e_{4l+2}=i(E_{n,l+1}+E_{l+1,n}), \quad e_{4l+3}=j(E_{n,l+1}+E_{l+1,n}), \\ e_{4l+4}=k(E_{n,l+1}+E_{l+1,n}),\quad 0\leq l\leq n-1.
 \end{gather*}
Then $e_1,\dotsc, e_{4n},w_1,w_2,w_3$ define a frame on a complement $\lie{m}$ of $\lie{h}$ in $\lie{g}$, hence an $\Sp(n)\Sp(1)$-structure; this is invariant under $H$, so it defines a global structure on $G/H$. 
We compute
\[de^a |_{\Lambda^2\lie{m}} =  (e_a\hook \omega_s)\wedge w^s, \quad dw^s|_{\Lambda^2\lie{m}} =\omega_s.\]
The projection to $\lie{h}$ defines a connection; its torsion is 
\[\Theta= (e_a\hook \omega_s)\wedge w^s\otimes e_a + \omega_s\otimes w_s.\]
This shows immediately that $\Theta^Q=\omega_s\otimes w_s$, so by Proposition~\ref{prop:qcGstructures} this is a qc structure. Consistently with Proposition~\ref{prop:dsigma22},
\[D\sigma = (\omega_1\wedge w^{23}+\omega_2\wedge w^{31}+\omega_3\wedge w^{12})\otimes w_{123}.\]
We note for future reference that the curvature of this connection is 
 \[\Omega=-\sum_{a<b} e^{ab}\otimes e^a\wedge e^b - \sum_{a<b,s} e^{ab}\otimes e_a\hook\omega_s \wedge e_b\hook\omega_s -(\omega_s-2w_s\hook w^{123})\otimes \omega_s.\]
\end{example}

\begin{example}
A similar example is the homogeneous space $G/H$, where
\[G=\Sp(n,1)\Sp(1), \quad H=\Sp(n)\Sp(1).\]
In this case we choose a complement $\lie{m}$ spanned by
\begin{gather*}
w_1=(-iE_{nn},i),\quad w_2=(-jE_{nn},j),\quad w_3=(-kE_{nn},k),\\
e_{4l+1}=E_{n,l+1}+E_{l+1,n}, \quad e_{4l+2}=i(-E_{n,l+1}+E_{l+1,n}), \\ e_{4l+3}=j(-E_{n,l+1}+E_{l+1,n}), \quad e_{4l+4}=k(-E_{n,l+1}+E_{l+1,n}).
 \end{gather*}
The connection defined by the projection has  torsion 
\[\Theta= -(e_a\hook \omega_s)\wedge w^s\otimes e_a + \omega_s\otimes w_s\]
and curvature 
 \[\Omega=\sum_{a<b} e^{ab}\otimes e^a\wedge e^b + \sum_{a<b,s} e^{ab}\otimes e_a\hook\omega_s \wedge e_b\hook\omega_s +(\omega_s+2w_s\hook w^{123})\otimes \omega_s.\]
\end{example}

\begin{example}
The remaining example to consider is the Heisenberg group, which is characterized by the existence of a left-invariant basis of one-forms $e^1,\dotsc, e^{4n+3}$ with
\[de^a=0, \quad de^{4n+s}=\omega_s,\]
where the $\omega_s$ are defined in terms of the $e^a$ in the usual way. In this case we can use the $(-)$ connection, i.e. the connection for which the indicated frame is parallel. It has torsion $\Theta_0$ and curvature zero.
\end{example}

A common feature of these examples is the presence of a natural connection, which will play a r\^ole in Section~\ref{sec:posit}. Other homogeneous examples appear in  \cite{deAndres:Quaternionic, ContiFernandezSantisteban:qc}. 
Hypersurfaces in quaternionic manifolds also give rise to qc structures under certain conditions, as shown in  \cite{Duchemin:hypersurfaces,IvanovMinchevVassilev}.

\section{Qc dialectics}
In the language of Proposition~\ref{prop:qcGstructures}, a qc structure is  a $Q$-structure whose 
intrinsic torsion  is in the $Q$-orbit of $\Theta_0^Q$, which we define as the image in  $\Lambda^2V^*\otimes \frac{T}{V}$ of
\[\Theta_0=\omega_1\otimes e_{4n+1} + \omega_2\otimes e_{4n+2} + \omega_3\otimes e_{4n+3}\in \Lambda^2T^*\otimes T.\]
This means that any qc structure has a natural reduction to $B$, where $B$ denotes the stabilizer in $Q$ of $\Theta_0^Q$. In this section we determine $B$ and study  the space $\coker\partial_B$ in which the intrinsic torsion of $B$-structures takes values.

The forms $\omega_s$ are fixed by the action of $\Sp(n)$. More generally, we say three elements  $\gamma_1$, $\gamma_2$, $\gamma_3$ of $\Lambda^2\R^{4n}$ are compatible with an $\Sp(n)$-structure if 
 some linear isomorphism of $\R^{4n}$  maps each $\gamma_s$ in $\omega_s$.
\begin{lemma}
\label{lemma:stabilizertriplet}
Let $\Gamma\subset\Lambda^2\R^{4n}$ be the space spanned by three $2$-forms compatible with an $\Sp(n)$-structure. Then the structure is uniquely determined by $\Gamma$, up to $\h^*$ action.
\end{lemma}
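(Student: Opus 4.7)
The plan is to encode each compatible $\Sp(n)$-structure as a hyper-Hermitian datum $(g, I_1, I_2, I_3)$---a Euclidean metric on $\R^{4n}$ together with three skew-adjoint complex structures satisfying the quaternion relations---for which $\omega_s = g(I_s\cdot,\cdot)$ spans $\Gamma$.  Given two such data $(g, I_s)$ and $(g', I'_s)$, I would write $g' = g\circ A$ and $\omega'_s = M^t_s\,\omega_t$ to package the ambiguity as an endomorphism $A$ of $\R^{4n}$ and a matrix $M\in\GL(3,\R)$; comparing $\omega'_s = g'I'_s$ with $\omega'_s = M^t_s g I_t$ then determines the new complex structures via $I'_s = A^{-1}M^t_s I_t$.

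The heart of the argument is to impose the quaternion identities $(I'_s)^2=-\id$ and $I'_1I'_2 = I'_3$ (together with cyclic variants) on these expressions.  Using $I_tI_u=-\delta_{tu}\id+\epsilon_{tuv}I_v$ and the standard identity $\omega_t g^{-1}\omega_t = -g$, the three diagonal conditions force $A$ to commute with each $I_t$ and to satisfy $A^2 = c\,\id$ for some positive $c$; combined with the positivity and $g$-symmetry of $A$, this pins it down to $A = c^{1/2}\id$.  The off-diagonal identities translate into $M^T M = c I$ together with the cyclic cross-product condition $M_1\times M_2 = c^{1/2}M_3$, forcing $M = c^{1/2}R$ with $R\in \SO(3)$, that is $M\in\mathrm{CO}^+(3)$.

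With $A = c^{1/2}\id$ and $M\in\mathrm{CO}^+(3)$ in hand, the surjectivity of $\widetilde{\Ad}\colon \h^*\to \mathrm{CO}^+(3)$---readable from \eqref{eqn:sp1actionasforms}, as $\Sp(1)$ double-covers $\SO(3)$ and the positive real scalars cover $\R_{>0}$---produces $p\in\h^*$ with $|p|^2 = c^{1/2}$ and $\widetilde{\Ad}(p) = M$.  The induced $\h^*$-action on $(g, I_s)$ then yields precisely $(g', I'_s)$, proving the lemma.

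The main obstacle is the noncommutative linear algebra in the second paragraph: extracting from $\sum_{t,u} M^t_s M^u_s I_t A^{-1} I_u = -A$ the fact that $A$ commutes with every $I_t$ requires some care, since $A^{-1}$ appears conjugated by $I_t$ and $I_u$ rather than multiplied by scalars.  Once this is established, the orientation condition $\det M > 0$ emerges automatically from the signed cyclic identity $I'_1I'_2 = I'_3$, which would fail for $R\in \mathrm{O}(3)\setminus\SO(3)$.
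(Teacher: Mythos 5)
Your route is genuinely different from the paper's, and it is viable, but as written it stops short of being a proof: the step you yourself label ``the main obstacle'' --- deducing from $\sum_{t,u}M^t_sM^u_s\,I_tA^{-1}I_u=-A$ that $A$ commutes with every $I_t$ --- is exactly where the mathematical content of the lemma sits, and you assert it rather than establish it. It can be closed as follows. Set $J_s=\sum_tM^t_sI_t$ and let $m_s$ be the $s$-th column of $M$, so $J_s^2=-\abs{m_s}^2\id$ and the $s$-th diagonal condition reads $J_sA^{-1}J_s=-A$, i.e.\ $\hat J_sA^{-1}\hat J_s^{-1}=\abs{m_s}^{-2}A$ for the normalized $\hat J_s=J_s/\abs{m_s}$. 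Taking determinants shows $\abs{m_s}=\mu$ is independent of $s$; comparing two such conditions shows $A$ commutes with $\hat J_s^{-1}\hat J_r=\langle\hat m_s,\hat m_r\rangle\id-\sum_v(\hat m_s\times\hat m_r)_vI_v$, hence with $\sum_v(\hat m_s\times\hat m_r)_vI_v$ for all $r\neq s$; since the three cross products $\hat m_1\times\hat m_2$, $\hat m_2\times\hat m_3$, $\hat m_3\times\hat m_1$ are linearly independent, $A$ commutes with each $I_t$, and then $J_sA^{-1}J_s=A^{-1}J_s^2=-\mu^2A^{-1}=-A$ gives $A^2=\mu^2\id$, whence $A=\mu\,\id$ by positivity. (Note this already yields $A$ scalar, which is stronger than the commutation-plus-$A^2=c\,\id$ you invoke, and is what you actually need before the off-diagonal identities can be reduced to conditions on $M$ alone.) With that supplied, the rest of your argument --- orthogonality and equal length of the columns of $M$ from $I'_1I'_2=I'_3$, positivity of $\det M$ from the sign in the cyclic identity, and surjectivity of $\widetilde{\Ad}$ onto $\R_{>0}\times\SO(3)$ --- is correct, modulo harmless normalization conventions in matching $\abs{p}$ to the scalars.

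For comparison, the paper avoids this noncommutative computation entirely: it never introduces the metric or the matrix $A$ (which is in any case redundant, since $g$ is recovered from the triple $\omega_s$), but instead observes that inversion $\gamma\mapsto\gamma^{-1}$ is, up to the factor $\norm{\gamma}^2$, a \emph{linear} map on $\Gamma$, so that $\gamma^\dagger=\norm{\gamma}^2\gamma^{-1}$ satisfies the polarization identity $\alpha^\dagger\beta+\beta^\dagger\alpha=\langle\alpha,\beta\rangle\id$. For a second compatible triple in $\Gamma$ one has $\alpha^{-1}\beta+\beta^{-1}\alpha=0$, and playing the two identities against each other forces the second triple to be an orthogonal, equal-norm, positively oriented basis of $\Gamma$ for the inner product defined by the first --- which is precisely the $\mathrm{CO}^+(3)=\widetilde{\Ad}(\h^*)$ ambiguity. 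Your approach buys a very concrete description of the ambiguity $(A,M)$ at the cost of the matrix manipulation above; the paper's buys brevity by packaging that manipulation into the single identity $(a\gamma_1+b\gamma_2+c\gamma_3)^{-1}=(a^2+b^2+c^2)^{-1}(a\gamma_1^{-1}+b\gamma_2^{-1}+c\gamma_3^{-1})$.
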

\begin{proof}
 Let $\gamma_1$, $\gamma_2$, $\gamma_3$ be compatible with an $\Sp(n)$-structure. Each two-form $\gamma_s$ defines an isomorphism
\[\gamma_s\colon \R^{4k}\to (\R^{4k})^*, \quad X\to X\hook\omega_s.\]
Three complex structures are induced on $\R^{4n}$ by
\[J_3=-\gamma_2^{-1}\gamma_1 =  \gamma_1^{-1}\gamma_2\]
and cyclic permutations.
It follows that 
\begin{equation*}
 (a\gamma_1+b\gamma_2+c\gamma_3)^{-1}=\frac1{a^2+b^2+c^2}a\gamma_1^{-1}+b\gamma_2^{-1}+c\gamma_3^{-1}.
\end{equation*}
We can therefore define  a linear map on the space $\Gamma$ spanned by the $\gamma_s$,
\[\Gamma\to \Hom((\R^{4n})^*,\R^{4n}), \quad \gamma\to \gamma^\dagger = \norm{\gamma}^2\gamma^{-1}, \quad \gamma\neq0.\]
By construction
\[\alpha^\dagger\beta + \beta^\dagger\alpha=\langle\alpha,\beta\rangle\id, \quad \alpha,\beta\in\Gamma.\]

Take three elements $\alpha,\beta,\gamma$ in $\Gamma$ and assume they are also compatible with an $\Sp(n)$-structure. Then
\[\alpha^{-1}\beta+\beta^{-1}\alpha=\norm{\alpha}^{-2} \alpha^\dagger\beta +\norm{\beta}^{-2} \beta^\dagger\alpha=0,\]
leading to
\[\norm{\alpha}^{-2} \alpha^\dagger\beta -\norm{\beta}^{-2} \alpha^\dagger\beta +\norm{\beta}^{-2}\langle \alpha,\beta\rangle\id=0.\]
Now observe that $\alpha^\dagger\beta$ is not a multiple of the identity whenever $\alpha,\beta$ are linearly independent.
Thus, $\alpha,\beta$ are orthogonal with the same norm. 

Summing up, $\alpha$, $\beta$ and $\gamma$ form an orthogonal basis of $\Gamma$ of elements with the same norm; this basis is positevely oriented by construction, and so uniquely determined up to $\h^*$ action.
\end{proof}

Recall from Section~\ref{sec:representations} that both $\h^n$ and $\im\h$ (and therefore its dual $(\im\h)^*$) are equipped with a left $\h^*$-action. The identification $\R^{4n}=\h^n$ induces 
\[\rho\colon \GL(n,\h)\times\h^*\to\GL(4n,\R), \quad \rho(g,p)(v)=gvp^{-1}.\]
\begin{proposition}
The stabilizer in $Q$ of $\Theta_0^Q$ is the group
\[B=\Sp(n)\h^*\ltimes\Hom(W,V),\] 
where the first factor represents the image of the homomorphism
 \[\iota\colon\Sp(n)\times\h^*\to \GL(V)\times\GL(W),\quad 
(g,p) \to (\rho(g,p),\widetilde\Ad(p))\]
having implicitly identified $W$ with $(\im\h)^*$.
\end{proposition}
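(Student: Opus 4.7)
The plan is to analyze the stabilizer of $\Theta_0^Q$ by exploiting the semidirect product structure $Q = (\GL(V)\times\GL(W))\ltimes\Hom(W,V)$, peeling off one factor at a time.

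First, I would dispose of the nilpotent factor. From the explicit formula for the $Q$-action on $T$, the $h$-component of $(g,p,h)$ only contributes through the $V$-part of $h(pw)\in V$, so it acts trivially both on $V\subset T$ and on the quotient $T/V$. Hence $\Hom(W,V)$ acts trivially on $\Lambda^2V^*\otimes T/V$ and is contained in the stabilizer; the problem reduces to identifying the stabilizer inside the Levi factor $\GL(V)\times\GL(W)$.

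A pair $(g,p)\in\GL(V)\times\GL(W)$ stabilizes $\Theta_0^Q=\sum_s\omega_s\otimes w_s$ iff $\sum_s(g\cdot\omega_s)\otimes(pw_s)=\sum_s\omega_s\otimes w_s$. Expanding $pw_s$ in the basis $\{w_t\}$ and using that $\{w_t\}$ is a basis of $T/V$ forces each $g\cdot\omega_s$ to lie in $\Gamma=\Span{\omega_1,\omega_2,\omega_3}$, and additionally requires the matrices of $g|_\Gamma$ (in the basis $\{\omega_s\}$) and of $p$ (in the basis $\{w_s\}$) to be inverse transposes of each other. The pushforward $g\cdot\Sigma_0$ of the standard $\Sp(n)$-structure $\Sigma_0$ is then compatible with $\{g\cdot\omega_s\}$, which still spans $\Gamma$. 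Lemma~\ref{lemma:stabilizertriplet} therefore produces $q\in\h^*$ such that $g\cdot\Sigma_0=\rho(1,q)\cdot\Sigma_0$; hence $\rho(1,q)^{-1}g$ stabilizes $\Sigma_0$, so it lies in $\rho(\Sp(n)\times\{1\})$, giving $g=\rho(a,q)$ for some $a\in\Sp(n)$. Thus $g\in\rho(\Sp(n)\times\h^*)=\Sp(n)\h^*$.

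To conclude, I would identify $p$: by~\eqref{eqn:sp1actionasforms}, the action of $\rho(a,q)$ on $\Gamma$ is trivial in $a$ and coincides with $\widetilde{\Ad}(q)$ under the identification $\Gamma\cong\im\h$. The inverse-transpose constraint then forces $p$ to act on $W$ as $\widetilde{\Ad}(q)^{-T}$, which under the identification $W\cong(\im\h)^*$ is precisely the contragredient representation, i.e., $\widetilde{\Ad}(q)$ acting on the dual space. This matches the definition of $\iota$ and yields $B=\Sp(n)\h^*\ltimes\Hom(W,V)$. The main technical point is bookkeeping the dual identifications ($\Gamma$ with $\im\h$ and $W$ with $(\im\h)^*$) so that the inverse-transpose relation linking $g|_\Gamma$ and $p$ becomes a single $\widetilde{\Ad}(q)$-action on both factors, as recorded in the statement of $\iota$.
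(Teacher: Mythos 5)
Your proof is correct and follows essentially the same route as the paper: you reduce to the Levi factor $\GL(V)\times\GL(W)$, invoke Lemma~\ref{lemma:stabilizertriplet} to force $g\in\Sp(n)\h^*$ from the fact that $g$ preserves $\Gamma=\Span{\omega_1,\omega_2,\omega_3}$, and then use the pairing that $\Theta_0^Q$ sets up between $\Gamma$ and $W$ (your ``inverse transpose'' condition, the paper's ``isomorphism $W^*\to\Gamma$'') to pin down the $\GL(W)$-component as the contragredient of $\widetilde\Ad(q)$. The only difference is expository: you make the triviality of the $\Hom(W,V)$-action and the matrix bookkeeping explicit where the paper leaves them implicit.
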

\begin{proof}
It is clear that $\Sp(n)\ltimes\Hom(W,V)$ fixes $\Theta_0^Q$. As for $\h^*$, observe that $\iota$ makes
\[\Span{\omega_1,\omega_2,\omega_3}\subset\Lambda^2V^*\] isomorphic to $\im \h$ as a representation of $\h^*$ (see Section~\ref{sec:representations}). Thus, identifying $W$ with $(\im\h)^*$, $\Theta_0^Q$ is in the trivial submodule of 
$\im \h\otimes (\im\h)^*$.

Conversely, we must show that the stabilizer of $\Theta_0^Q$ in $\GL(V)\times\GL(W)$ is $\iota(\Sp(n)\times\h^*)$. In fact, the stabilizer of 
$\Span{\omega_1,\omega_2,\omega_3}$
in $\GL(V)\times\GL(W)$ is $\Sp(n)\h^*\times\GL(W)$ by Lemma~\ref{lemma:stabilizertriplet}. On the other hand $\Theta_0^Q$ determines an isomorphism
\[W^*\to \Span{\omega_1,\omega_2,\omega_3}\]
and the subgroup of $\Sp(n)\h^*\times\GL(W)$ that fixes this isomorphism is precisely the image of $\iota$.
\end{proof}

\begin{remark}
The identification of $W$ with $(\im\h)^*$ has the consequence that scalars $\lambda\in\R^*\subset B$ act on $T$ as $\lambda^{-2}\id_W+\lambda^{-1}\id_V$.
\end{remark}

We can now refine the second part of Proposition~\ref{prop:qcGstructures} in the following way:
\begin{corollary}
Every qc $Q$-structure has a unique $B$-reduction $P$
such that \[\Theta^Q(u)=\Theta_0, \quad u\in P.\]
\end{corollary}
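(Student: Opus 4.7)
The plan is to construct $P$ as the level set of the intrinsic torsion map and then check that it has all required properties; this is the standard reduction procedure available when the intrinsic torsion lies in a single orbit. Let $P_Q$ denote the given qc $Q$-structure. By Proposition~\ref{prop:qcGstructures}, the equivariant map
\[\Theta^Q\colon P_Q\to \Lambda^2V^*\otimes \tfrac{T}{V}\]
takes values in the single orbit $Q\cdot\Theta_0^Q$, which is $Q$-equivariantly identified with $Q/B$ since $B$ was defined precisely as the stabilizer of $\Theta_0^Q$. Thus $\Theta^Q$ factors as an equivariant submersion onto $Q/B$, and I would take $P:=(\Theta^Q)^{-1}(\Theta_0^Q)\subset P_Q$ (interpreting $\Theta^Q(u)=\Theta_0$ as an equation in $\Lambda^2V^*\otimes T/V$ via the canonical projection).

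Next I would verify that $P$ is a principal $B$-subbundle. The equivariance relation $\Theta^Q(u\cdot g)=g^{-1}\cdot\Theta^Q(u)$ immediately gives: if $u\in P$, then $u\cdot g\in P$ iff $g\in B$. Hence $B$ acts freely on $P$ and no larger subgroup of $Q$ preserves $P$. Surjectivity over $M$ is equally quick: for $x\in M$ and any $u\in (P_Q)_x$, the hypothesis gives some $g\in Q$ with $\Theta^Q(u)=g\cdot\Theta_0^Q$, whence $u\cdot g\in P\cap (P_Q)_x$. Smoothness of $P$ is inherited from the fact that the orbit $Q\cdot\Theta_0^Q\subset\Lambda^2V^*\otimes T/V$ is a locally closed submanifold (being the image of $Q/B$), so that $\Theta_0^Q$ is a regular value of $\Theta^Q$ viewed as a map into the orbit.

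Uniqueness is then tautological: any $B$-reduction $P'\subset P_Q$ on which $\Theta^Q\equiv\Theta_0^Q$ is by definition contained in $(\Theta^Q)^{-1}(\Theta_0^Q)=P$, and two $B$-subbundles of $P_Q$ over the same base coincide as soon as one is contained in the other. I do not foresee any real obstacle; the only substantive point is the identification of the orbit with $Q/B$, which is guaranteed by the preceding proposition, and the resulting smoothness of $P$.
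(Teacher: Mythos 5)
Your argument is correct and is exactly the standard orbit-reduction procedure that the paper relies on implicitly (the corollary is stated without proof, as an immediate consequence of Proposition~\ref{prop:qcGstructures} and the identification of $B$ as the stabilizer of $\Theta_0^Q$; the same level-set construction is spelled out later in the proof of Theorem~\ref{thm:canonicalKreduction}). Your reading of $\Theta^Q(u)=\Theta_0$ as an equation in $\Lambda^2V^*\otimes T/V$ via the projection to $\Theta_0^Q$ is also the intended one.
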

We shall refer to such a structure as a \dfn{qc $B$-structure}. This leads us to consider the intrinsic torsion of $B$-structures. Consider the diagram
\[\xymatrix{T^*\otimes\lie{b}\ar[d]\ar[r]^{\partial_B}& \Lambda^2T^*\otimes T\ar[d]\ar[r] & \coker\partial_B\ar[r]\ar[d]^r&0\\ T^*\otimes\lie{q}\ar[r]^{\partial_Q}&\Lambda^2T^*\otimes T\ar[r] &  \Lambda^2V^*\otimes \frac{T}V \ar[r] & 0}\]
By construction $r$ is $B$-equivariant, and maps the intrinsic torsion of a $B$-structure to its $Q$-intrinsic torsion, i.e. the intrinsic torsion of the induced $Q$-structure.  Accordingly, a qc $B$-structure has intrinsic torsion in $r^{-1}(\Theta_0^Q)$. In fact, we will see in Section~\ref{sec:integrability}  that the intrinsic torsion of a qc $B$-structure is forced to lie in a much smaller space. For the moment, we use the above diagram to study $\coker\partial_B$.

\begin{lemma}
\label{lemma:projectiontorsion}
The projection $ \Lambda^2T^*\otimes T\to\coker\partial_B$ induces by restriction $B$-equivariant maps
\[p_{VVV}\colon  \Lambda^2V^*\otimes V \to \coker\partial_B \quad
p_{VWW}\colon V^*\otimes W^*\otimes W \to\coker\partial_B
\]
such that
\[\ker r=\im p_{VVV}+\im p_{VWW} \quad\text{(not a direct sum)}.\]
\end{lemma}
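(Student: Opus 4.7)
The plan is to identify $\ker r$ with the quotient $\im\partial_Q/\im\partial_B$, decompose $\Lambda^2T^*\otimes T$ into the six summands $A\otimes B$ obtained from $\Lambda^2T^*=\Lambda^2V^*\oplus V^*\wedge W^*\oplus\Lambda^2W^*$ and $T=V\oplus W$, and show that among the five summands other than $\Lambda^2V^*\otimes W$ (which is $\coker\partial_Q$ by the preceding lemma), three are contained in $\im\partial_B$, while the remaining two are exactly the sources of $p_{VVV}$ and $p_{VWW}$.

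The two summands with a $V$-factor in the target are handled via the abelian sub-module $\Hom(W,V)\subset\lie{b}$, which sits inside $W^*\otimes V\subset T^*\otimes T$. Applying $\partial_B$ shows that $V^*\otimes\Hom(W,V)$ maps isomorphically onto $V^*\wedge W^*\otimes V$ and that $W^*\otimes\Hom(W,V)$ surjects onto $\Lambda^2W^*\otimes V$, so both summands lie in $\im\partial_B$. For the third summand $\Lambda^2W^*\otimes W$ I exploit the $\lie{sp}(1)\subset\lie{b}$ part, which acts on $W\cong\im\h$ via $\widetilde{\Ad}$, yielding an isomorphism $\lie{sp}(1)\cong\lie{so}(W)$. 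A direct computation will show that $\partial\colon W^*\otimes\lie{so}(W)\to\Lambda^2W^*\otimes W$ is an isomorphism of $9$-dimensional spaces. The main technical subtlety, which I expect to be the principal obstacle, is that elements of $\lie{sp}(1)$ also act nontrivially on $V$, so $\partial_B(\alpha\otimes X)$ for $\alpha\in W^*$ and $X\in\lie{sp}(1)$ picks up an extra contribution in $V^*\wedge W^*\otimes V$. This correction is however absorbed by the previous step, which already shows $V^*\wedge W^*\otimes V\subset\im\partial_B$.

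With the three containments established, I verify that the projection $\Lambda^2T^*\otimes T\to\coker\partial_B$ restricts to well-defined, $B$-equivariant maps $p_{VVV}$ and $p_{VWW}$ out of $\Lambda^2V^*\otimes V$ and $V^*\otimes W^*\otimes W$. These spaces inherit natural $B$-module structures from the sub-quotient identifications $V\hookrightarrow T$, $W^*\hookrightarrow T^*$, $T^*\to V^*$, $T\to W$; the ambiguity in lifting a representative back to $\Lambda^2T^*\otimes T$ always falls into $V^*\wedge W^*\otimes V+\Lambda^2W^*\otimes V+\Lambda^2W^*\otimes W$, all known to lie in $\im\partial_B$, so $p_{VVV}$ and $p_{VWW}$ are canonical. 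Both images visibly lie in $\ker r$ because their sources have no $\Lambda^2V^*\otimes W$ component; conversely, any representative of $\ker r$ has its components in the three absorbed summands killed modulo $\im\partial_B$, leaving only contributions in $\Lambda^2V^*\otimes V$ and $V^*\otimes W^*\otimes W$, whence the equality $\ker r=\im p_{VVV}+\im p_{VWW}$.
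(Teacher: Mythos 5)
Your proposal is correct and follows essentially the same route as the paper: the three containments you establish ($V^*\wedge W^*\otimes V$ and $\Lambda^2W^*\otimes V$ via $\Hom(W,V)\subset\lie{b}$, and $\Lambda^2W^*\otimes W$ via $\lie{sp}(1)$ acting as $\so(W)$, with the extra $V^*\wedge W^*\otimes V$ contribution absorbed) are exactly the paper's inclusion $\im\partial_B\supset\Lambda^2W^*\otimes T+V^*\otimes W^*\otimes V$, and the factorization through quotient modules to get equivariance of $p_{VVV}$ and $p_{VWW}$, together with the identification $\ker r=\im\partial_Q/\im\partial_B$, matches the paper's argument.
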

\begin{proof}
We first prove that
\begin{equation}
\label{eqn:coker}
\im\partial_B\supset \Lambda^2W^*\otimes T+V^*\otimes W^*\otimes V.
\end{equation}
Indeed, the maps
\[\partial_B\colon V^*\otimes \Hom(W,V)  \to \Lambda^{1,1}\otimes V,\quad 
\partial_B\colon W^*\otimes \Hom(W,V)  \to \Lambda^{0,2}\otimes V\]
are obviously surjective; moreover, the composition map
\[W\otimes \lie{sp}(1) \xrightarrow{\partial_B}  \Lambda^2W^*\otimes W+(V\otimes W)\otimes V \to \Lambda^2W^*\otimes W\]
is an isomorphism. This proves  \eqref{eqn:coker}.

The projection $\Lambda^2T^*\otimes T\to\coker\partial_B$
induces a $B$-equivariant map
\[p\colon\Lambda^2T^*\otimes V\to\coker\partial_B;\]
by \eqref{eqn:coker}, $p$ factors through
\[\Lambda^2T^*\otimes V\to\Lambda^2V^*\otimes V\to \coker\partial_B.\]
Composing on the left with the inclusion $\Lambda^2V^*\otimes V\to \Lambda^2T^*\otimes V$ (which is not $B$-equivariant) gives an equivariant map.

Similarly, the map
\[T^*\wedge W^*\otimes T\to\coker\partial_B\]
factors through
\[T^*\wedge W^*\otimes T\to V^*\otimes W^*\otimes W\to \coker\partial_B.\]

Finally, \eqref{eqn:coker} implies that
the map
\[\Lambda^2V^*\otimes T+ V^*\otimes W^*\otimes W \to \coker\partial_B\]
is surjective, yielding the final part of the statement.
\end{proof}
\begin{remark}
It is not possible to construct an analogous $B$-equivariant map 
\[p_{VVW}\colon \Lambda^2V^*\otimes W\to\coker\partial_B.\]
Indeed, the smallest $B$-module in $\Lambda^2T^*\otimes T$ containing $\Lambda^2V^*\otimes W$ is $\Lambda^2V^*\otimes T$, and the image of $\Lambda^2V^*\otimes T$ in the cokernel is bigger than the image of $\Lambda^2V^*\otimes W$.
\end{remark}

It follows from the above remark that we cannot think of $\Theta_0^Q$ as a ``component'' of the $B$-intrinsic torsion: we have to express the relation in terms of a short exact sequence.
\begin{proposition}
\label{prop:kernelr}
There is an exact sequence of $B$-modules
\begin{equation}
\label{eqn:nonsplittingseq}
0\to W_1\oplus W_2\xrightarrow{i} \coker\partial_B\xrightarrow{r}\coker\partial_Q\to 0
 \end{equation}
where 
\begin{align*}
W_1&=V^*\otimes \lie{sp}(n)^\perp\cong \begin{cases}(V_{21}+\Lambda^3_0E+2E)(S^3H+H), &  n>1,\\ ES^3H+EH, & n=1,\end{cases}\\
W_2&=V^*\otimes S^2_0(W)\cong ES^3H+ES^5H,
\end{align*}
and the restriction of $i$ to each component is given by restricting  the $B$-equivariant alternating maps
\begin{gather*}
\partial_1\colon V^*\otimes \gl(V)\to \Lambda^2V^*\otimes V,\quad
\partial_2\colon V^*\otimes \gl(W)\to V^*\otimes W^*\otimes W.
\end{gather*}
\end{proposition}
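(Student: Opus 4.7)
The plan is to leverage Lemma~\ref{lemma:projectiontorsion}, which already provides $\ker r = \im p_{VVV} + \im p_{VWW}$, and then to identify this kernel with $W_1\oplus W_2$ via the map $i$. Writing $M_1 = \Lambda^2V^*\otimes V$, $M_2 = \Lambda^2V^*\otimes W$, and $M_4 = V^*\wedge W^*\otimes W$, the inclusion \eqref{eqn:coker} already absorbs the remaining summands of $\Lambda^2T^*\otimes T$, so $\coker\partial_B\cong M_2\oplus (M_1\oplus M_4)/X$, where $X$ is the projection of $\im\partial_B$ onto $M_1\oplus M_4$. Since $r$ extracts the $M_2$-factor, this gives $\ker r\cong (M_1\oplus M_4)/X$.

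First I would establish two $B$-equivariant isomorphisms that make the rest of the analysis tractable. The map $\partial_1\colon V^*\otimes\so(V)\to M_1$ is an isomorphism: both sides have the same dimension $4n\binom{4n}{2}$, and injectivity follows from the standard fact that no element of $V^{*\otimes 3}$ can be simultaneously symmetric in positions $(1,2)$ and antisymmetric in positions $(2,3)$. Separately, $\partial_2\colon V^*\otimes\gl(W)\to M_4$ is tautologically an isomorphism, given the natural identification of $V^*\wedge W^*$ with $V^*\otimes W^*$.

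Next I would describe $X$ explicitly. Since $\lie{b} = \lie{sp}(n)\h^*\ltimes\Hom(W,V)$ contains no component mapping $V\to W$, the $M_4$-contribution of $\partial_B$ comes only from $V^*\otimes\h^*$ via an injective map $\phi_2$ induced by the linear isomorphism $\h^*\to \so(W)+\R\id_W\subset\gl(W)$. The $M_1$-contribution splits as $\partial_1(V^*\otimes\lie{sp}(n))$ plus $\phi_1(V^*\otimes\h^*)$, where $\phi_1$ is induced by the injection $\h^*\hookrightarrow\gl(V)$, and the remaining summands of $\lie{b}$ affect only the components already quotiented out. Hence
\[X = \partial_1(V^*\otimes\lie{sp}(n))\oplus 0 \;+\; \{(\phi_1(\beta_h),\phi_2(\beta_h)) : \beta_h\in V^*\otimes\h^*\}.\]

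To conclude, I would transport the quotient through $\Psi := \partial_1\oplus\partial_2$ into $V^*\otimes\so(V)\oplus V^*\otimes\gl(W)$, exploiting the splittings $V^*\otimes\so(V) = V^*\otimes\lie{sp}(n)\oplus W_1$ and $V^*\otimes\gl(W) = V^*\otimes(\so(W)+\R\id_W)\oplus W_2$. In this picture $\Psi^{-1}(X)$ is the sum of $V^*\otimes\lie{sp}(n)\oplus 0$ and the graph of an injective linear map $V^*\otimes(\so(W)+\R\id_W)\to W_1$; a standard graph-quotient argument then collapses the quotient to $W_1\oplus W_2$. A final bookkeeping verifies that $i|_{W_1}$ and $i|_{W_2}$ correspond to the two inclusions into this direct sum, giving simultaneously the injectivity of $i$ and the exactness of the sequence. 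The main obstacle throughout is the non-splitness of $X$ caused by the scalar $\R\subset\h^*$, which acts on $T$ as $-\id_V-2\id_W$ and so couples the $M_1$ and $M_4$ contributions; verifying that this coupling collapses cleanly, rather than producing unexpected extra quotients, is the technical heart of the argument and is what forces the appearance of $\lie{sp}(n)^\perp$ (not $\lie{sp}(n)\h^{*\perp}$) inside $\so(V)$ in the description of $W_1$.
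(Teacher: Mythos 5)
Your argument is essentially the paper's own proof. Both reduce, via Lemma~\ref{lemma:projectiontorsion} and the inclusion \eqref{eqn:coker}, to computing the quotient of $\Lambda^2V^*\otimes V\oplus V^*\otimes W^*\otimes W$ by $\partial_B(V^*\otimes\lie{sp}(n)\h)$, and both rest on the same two facts: that $\partial_1$ restricts to an isomorphism $V^*\otimes\so(V)\to\Lambda^2V^*\otimes V$ (so that the cokernel of $\partial_1$ on $V^*\otimes\lie{sp}(n)$ is $V^*\otimes\lie{sp}(n)^\perp$) and that the $\h$-contribution is carried injectively into $V^*\otimes W^*\otimes W$ by $\partial_2$. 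Your graph-quotient is precisely the paper's snake-lemma argument, applied to $0\to V^*\otimes\lie{sp}(n)\to V^*\otimes\lie{sp}(n)\h\to V^*\otimes\h\to 0$, written out by hand, and your closing observation about why $\lie{sp}(n)^\perp$ rather than $(\lie{sp}(n)+\lie{sp}(1))^\perp$ survives is the right way to understand the outcome. (Minor quibbles: injectivity of the map $V^*\otimes(\so(W)+\R\id_W)\to W_1$ whose graph you quotient by is irrelevant to the graph-quotient collapse, and what you write as $V^*\otimes\h^*$ should be $V^*\otimes\h$, the Lie algebra.)

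The one piece of the statement your argument does not deliver is the $B$-module structure. Every splitting you use ($T=V\oplus W$, $\so(V)=\lie{sp}(n)\oplus\lie{sp}(n)^\perp$, $\gl(W)=(\so(W)+\R\id_W)\oplus S^2_0W$) is only $\Sp(n)\h^*$-equivariant, so a priori you have identified $\ker r$ with $W_1\oplus W_2$ only as $\Sp(n)\h^*$-modules. To conclude that \eqref{eqn:nonsplittingseq} is an exact sequence of $B$-modules with the stated irreducible decompositions, you still need to check that the unipotent factor $\Hom(W,V)\subset B$ acts trivially on $\ker r$; this holds because for $h\in\Hom(W,V)$ the operator $h-\id$ sends both $\Lambda^2V^*\otimes V$ and $V^*\wedge W^*\otimes W$ into $V^*\wedge W^*\otimes V+\Lambda^2W^*\otimes T\subset\im\partial_B$, so the action descends trivially to the cokernel, and the central scalars act by a character. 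This is exactly the short final paragraph of the paper's proof; with it added, your argument is complete.
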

In this statement, $\lie{sp}(n)^\perp$ denotes the orthogonal complement of $\lie{sp}(n)$ in $\so(4n)$.

\begin{proof}
As an  $\Sp(n)\h^*$-module, the Lie algebra of $B$ decomposes as
\[\lie{b}=\lie{sp}(n)\oplus\lie{sp}(1)\oplus\R\oplus\Hom(W,V),\]
where the inclusion of $\lie{sp}(1)\oplus\R$ in $\gl(T)$ is given by
\[p \to (R_{-p},\widetilde\ad(p)),\quad  p\in\lie{sp}(1); \quad \lambda\to (-\lambda\id, -2\lambda\id), \quad \lambda\in\R.\]
By Lemma~\ref{lemma:projectiontorsion}
\[\ker r = \frac{\Lambda^2V^*\otimes V + V^*\otimes W^*\otimes W}{\im \partial_B\cap (\Lambda^2V^*\otimes V + V^*\otimes W^*\otimes W)} =\frac{\Lambda^2V^*\otimes V + V^*\otimes W^*\otimes W}{\partial_B(V^*\otimes \lie{sp}(n)\h)};\]
using the fact that $\partial_2$ is injective, the snake lemma applied to 
\[\xymatrix{
0\ar[r]&V^*\otimes\lie{sp}(n)\ar[r]\ar[d]^{\partial_{1}} & V^*\otimes\lie{sp}(n)\h\ar[d]^{\partial}\ar[r]& V^*\otimes\h\ar[r]\ar[d]^{\partial_2}&0\\
0\ar[r]& \Lambda^2V^*\otimes V\ar[r]&\Lambda^2V^*\otimes V+V^*\otimes W^*\otimes W\ar[r]& V^*\otimes W^*\otimes W\ar[r] &0
 }\]
yields
\[0\to W_1\to \ker r\to W_2\to 0.\]
This sequence splits by Lemma~\ref{lemma:projectiontorsion}. 
Now observe that $\Hom(W,V)$ acts trivially on $W_1$ and $W_2$, and the component $\R$ acts as a multiple of the identity on $\Lambda^2V^*\otimes V + V^*\otimes W^*\otimes W$. The decomposition of $W_1$ and $W_2$ into irreducibile $B$-modules is therefore the same as the decomposition into $\Sp(n)\Sp(1)$-modules.

\end{proof}
  
\begin{remark}
\label{remark:snake}
An alternative description can be obtained by applying the snake lemma to 
\[\xymatrix{
0\ar[r]&T^*\otimes\lie{b}\ar[r]\ar[d]^{\partial_{\lie{b}}} & T^*\otimes\lie{q}\ar[d]^{\partial_{\lie{q}}}\ar[r]& T^*\otimes\frac{\lie{q}}{\lie{b}}\ar[r]\ar[d]&0\\
0\ar[r]& \Lambda^2T^*\otimes T\ar[r]&\Lambda^2T^*\otimes T\ar[r]& 0
 }\]
giving an exact sequence
\[\ker\partial_Q\xrightarrow{\alpha} T^*\otimes\frac{\lie{q}}{\lie{b}}\to \coker\partial_B\to\coker\partial_Q\to 0\]
This means that 
\[\coker\alpha=W_1+W_2,\]
i.e. we can think of the $W_i$ as the components of $\coker\alpha$.
\end{remark}

\begin{corollary}
\label{cor:kerpartialB}
The kernel\ of $\partial_B$ is $S^2W^*\otimes V+ (S^2ES^2H+S^2H)$, where the second summand  lies diagonally in $W^*\otimes(\lie{sp}(n)+\h)+V^*\otimes\Hom(W,V)$.
\end{corollary}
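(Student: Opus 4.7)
The plan is to identify $\ker\partial_B$ with the first prolongation $\lie{b}^{(1)} = (T^*\otimes\lie{b})\cap (S^2T^*\otimes T)$ under the inclusion $\lie{b}\subset\gl(T)$, and analyze the symmetrization constraint directly using the splittings $T=V\oplus W$ and $\lie{b}=\lie{sp}(n)\oplus\lie{sp}(1)\oplus\R\oplus\Hom(W,V)$, remembering that $\lie{sp}(1)\oplus\R$ sits diagonally in $\gl(V)\oplus\gl(W)$.

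Writing a generic element of $T^*\otimes\lie{b}$ as a tuple $(a,b,c,d,e,f)$ matching the six summands, I would group the induced components of $T^*\otimes T^*\otimes T$ by type $(T^*)_\alpha(T^*)_\beta T_\gamma$ with $\alpha,\beta,\gamma\in\{V,W\}$ and examine the symmetry constraint case by case. The $b$-component produces a $V^*W^*W$-type term via its $\gl(W)$-action, with no $W^*V^*W$-partner available in $\lie{b}$ (since $\Hom(V,W)\not\subset\lie{b}$); symmetry forces $b=0$. The $a$-component is constrained to lie in $\lie{sp}(n)^{(1)}\subseteq\so(4n)^{(1)}=0$, so $a=0$ as well. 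The surviving conditions are: $f\in S^2W^*\otimes V$; a symmetry relation between the $V^*W^*V$- and $W^*V^*V$-slots that forces $c$ to equal the transpose of $\Phi:=d+e_V$, with $\Phi\in W^*\otimes[\lie{sp}(n)+\lie{sp}(1)+\R]\subset W^*\otimes\gl(V)$; and $e_W\in S^2W^*\otimes W$.

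The key step is this last constraint on $e$. Working in a basis $w_1,w_2,w_3$ of $W=\im\h$ and writing $e=\sum_i w^i\otimes(p_i,\lambda_i)$ with $p_i\in\lie{sp}(1)$ and $\lambda_i\in\R$, one has $e_W(w_j,w_k) = [p_j,w_k]-2\lambda_j w_k$, so the symmetry condition reads $[p_j,w_k]-[p_k,w_j] = 2\lambda_j w_k-2\lambda_k w_j$. Expanding with $p_i=\sum_j p_i^j w_j$ produces the linear equations $p_i^i=0$, $p_i^j+p_j^i=0$, and $p_i^j-p_j^i = 2\epsilon_{ijk}\lambda_k$, so that the antisymmetric matrix $p$ is entirely determined by $\lambda$. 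Hence the valid $e$ form a $3$-dimensional subspace parametrized by $\lambda\in W^*\otimes\R\cong S^2H$.

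Assembling the pieces, $\ker\partial_B$ decomposes as $S^2W^*\otimes V$ (from $f$), plus the diagonal image of $W^*\otimes\lie{sp}(n)\cong S^2ES^2H$ (from $d$ with swap-partner in $c$), plus the diagonal image of the $S^2H$-subspace of $W^*\otimes(\lie{sp}(1)\oplus\R)$ just identified (from the constrained $e$ with its $c$-partner). Dimensions add to $24n+(6n^2+3n)+3 = 6n^2+27n+3$, which agrees with $\dim(T^*\otimes\lie{b})-\dim\im\partial_B$ computed via Proposition~\ref{prop:kernelr}, confirming that we have exhausted the kernel. The main obstacle is the explicit basis computation producing the $S^2H$-identification: the source $W^*\otimes(\lie{sp}(1)\oplus\R)$ and target $\Lambda^2W^*\otimes W$ share common isotypic components of multiplicity greater than one, so representation-theoretic dimension counting alone cannot isolate the kernel.
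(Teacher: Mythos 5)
Your proof is correct, and it reaches the same three pieces of the kernel via the same structural observations, but it establishes completeness by a genuinely different mechanism. The paper's proof is a dimension count: Proposition~\ref{prop:kernelr} already pins down $\dim\ker\partial_B$, so it suffices to exhibit enough elements --- namely $S^2W^*\otimes V$, the fact that $\partial(V^*\otimes\Hom(W,V))=V^*\otimes W^*\otimes V$ absorbs the mixed part of $\partial(W^*\otimes(\lie{sp}(n)+\h))$, and the kernel of $\partial_2$ on $W^*\otimes\h$ --- and stop. You instead compute the first prolongation $\lie{b}^{(1)}$ from scratch, exhausting the symmetry constraints slot by slot; this forces you to supply two inputs the paper never needs explicitly: the vanishing $\lie{sp}(n)^{(1)}\subset\so(4n)^{(1)}=0$ to kill the $V^*\otimes\lie{sp}(n)$ component, and the observation that $\lie{sp}(1)\oplus\R$ injects into $\gl(W)$ to kill the $V^*\otimes(\lie{sp}(1)\oplus\R)$ component. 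Your explicit $3\times 3$ computation identifying the $S^2H$ inside $W^*\otimes(\lie{sp}(1)\oplus\R)$ (antisymmetric $p$ determined by $\lambda$) is the concrete form of the paper's step ``taking the kernel of $\partial_2$ in $W^*\otimes(\lie{sp}(n)+\h)$'', and is consistent with the isomorphism $W^*\otimes\lie{sp}(1)\cong\Lambda^2W^*\otimes W$ noted in the proof of Lemma~\ref{lemma:projectiontorsion}. What your route buys is self-containedness: the result no longer depends on the snake-lemma bookkeeping behind Proposition~\ref{prop:kernelr}, and your appeal to that proposition becomes a consistency check rather than a logical ingredient. What it costs is the case analysis; the paper's argument is shorter precisely because the cokernel computation has already been paid for. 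One small presentational caveat: when you conclude $a\in\lie{sp}(n)^{(1)}$ you are implicitly using that $b=0$ has already been forced by the $V^*W^*W$ slot (otherwise the $VVV$ constraint only says $a+b_V$ is symmetric); your ordering makes this fine, but it is worth stating that the two steps are not independent.
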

\begin{proof}
By Proposition~\ref{prop:kernelr}, the kernel of $\partial_B$ is an $\Sp(n)\Sp(1)$-module of the same dimension as $S^2W^*\otimes V+ S^2H(S^2E\oplus\R)$, and clearly it contains $S^2W\otimes V$. The restriction of $\partial$ to $V^*\otimes \Hom(W,V)$ is injective with image $V^*\otimes W^*\otimes V$, so it contains  $\partial_1(W^*\otimes(\lie{sp}(n)+\h))$. Taking the kernel of $\partial_2$ in $W^*\otimes (\lie{sp}(n)+\h)$, we find  $S^2W^*\otimes V+ (S^2ES^2H+S^2H)$.
\end{proof}

In later sections we shall have to work with certain invariant maps. Since the decomposition of $W_1+W_2$ into $\Sp(n)\Sp(1)$-modules contains some modules with multiplicity greater than one, it is clear that Schur's lemma will not be sufficient in order to study  these maps. Thus, we shall have to be more explicit. Since we refer to $\Sp(n)\Sp(1)$-modules, we can identify $T$, $V$ and $W$  with their duals through the metric here.

Assume first $n>1$. The space $V\otimes \Lambda^2V$ contains three copies of $EH$, corresponding to the highest weight vectors
\begin{align*}
\alpha_1
 &=  v_1h_2\otimes (v_{n+j}h_1\wedge v_jh_2+v_{n+j}h_2\wedge v_jh_1)
 +2 v_1h_1\otimes  v_jh_2\wedge v_{n+j}h_2\\
\alpha_2&=v_{n+j}h_2\otimes (v_jh_2\wedge v_1h_1+v_1h_2\wedge v_jh_1)-v_jh_2\otimes(v_{n+j}h_2\wedge v_1h_1+v_1h_2\wedge v_{n+j}h_1)\\
\alpha_3&= v_{n+j}h_2\otimes (v_1h_1\wedge v_jh_2 + v_1h_2\wedge v_jh_1)-v_{j}h_2\otimes (v_1h_1\wedge v_{n+j}h_2+v_1h_2\wedge v_{n+j}h_1)\\
& -2v_{n+j}h_1\otimes (v_1h_2\wedge v_jh_2) +2v_{j}h_1\otimes (v_1h_2\wedge v_{n+j}h_2).
\end{align*}
Similarly, $V\otimes \Lambda^2V$  contains the linearly independent highest weight vectors 
\[
\beta_1 
 =  v_1h_2\otimes ( v_jh_2\wedge v_{n+j}h_2), \quad
\beta_2= v_jh_2\otimes (v_1h_2\wedge v_{n+j}h_2)-v_{n+j}h_2 \otimes  (v_1h_2\wedge v_jh_2),
\]
each generating a submodule isomorphic to  $ES^3H$.

We shall denote by $\tilde\beta_i$, $\tilde\alpha_i$ the images of these vectors under the isomorphism
\begin{equation*}
\widetilde{\cdot}\colon V\otimes \Lambda^2V\to \Lambda^2V\otimes V, \quad \widetilde{v\otimes\eta}= \eta\otimes v.
\end{equation*}
Each of $V\otimes S^2_0W$ and $V\otimes \Lambda^2W$ contains an $ES^3H$, with highest weight vectors
\begin{gather*}
\beta_3 =v_1h_2\otimes (w_1\otimes (w_2+iw_3)+ (w_2+iw_3)\otimes w_1)-2iv_1h_1\otimes (w_2+iw_3)\otimes (w_2+iw_3),\\
\beta_4=v_1h_2\otimes w_1\otimes (w_2+iw_3)-v_1h_2 \otimes (w_2+iw_3)\otimes w_1.
 \end{gather*}
Finally, $V\otimes W\otimes W$  contains two copies of $EH$, generated by
\begin{gather*}
\alpha_4=v_1h_2\otimes (w^1\otimes w_1+w^2\otimes w_2+w^3\otimes w_3),\\
\alpha_5=v_1h_2\otimes (w^2\otimes w_3-w^3\otimes w_2) +v_1h_1\otimes (w^1\otimes (w_2+iw_3)-(w^2+iw^3)\otimes w_1).
\end{gather*}

If $n=1$, all these highest weight vectors remain well defined, although they do not generate distinct modules, as $\alpha_3=\alpha_1$ and $\beta_2=\beta_1$. Accordingly, we can drop the assumption on $n$ for the rest of the section.
\begin{lemma}
\label{lemma:formulae}
We have
\begin{align*}
\partial(\alpha_1) &=\frac12\tilde\alpha_3-\frac32\tilde\alpha_2, &
 \partial(\alpha_2) &=-\tilde\alpha_1-\frac12\tilde\alpha_3+\frac12\tilde\alpha_2,\\
 \partial(\alpha_3)&=\tilde\alpha_1-\frac12\tilde\alpha_3-\frac32\tilde\alpha_2,\\
\partial \beta_1&=
-\tilde\beta_2, &
\partial\beta_2
&=\tilde\beta_2-2\tilde\beta_1,
\end{align*}
and $\partial(V^*\otimes (\lie{sp}(n)+\h))\subset\im \partial_B$ contains
\[ \partial(\alpha_2), \quad \tilde\alpha_2+\tilde\alpha_3+8\alpha_4, \quad 8i\alpha_5-\tilde\alpha_3+3\tilde\alpha_2, \quad
\tilde\beta_2+2i\beta_4.\]
\end{lemma}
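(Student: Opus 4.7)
The plan is to verify the five displayed identities by direct expansion, and then to exhibit explicit preimages in $V^*\otimes(\lie{sp}(n)+\h)$ for each of the four listed elements of $\im\partial_B$. First I unravel $\partial$ on the spaces at hand: using the metric to identify $V\cong V^*$ and the identification $\Lambda^2V^*\cong\so(V)$ of Section~\ref{sec:representations}, an element $u\otimes(\eta\wedge\zeta)$ of $V\otimes\Lambda^2V$, viewed inside $V^*\otimes\gl(V)$, satisfies
\[\partial\bigl(u\otimes(\eta\wedge\zeta)\bigr)=(u\wedge\eta)\otimes\zeta-(u\wedge\zeta)\otimes\eta,\]
and an analogous formula governs $\partial$ on $V\otimes W\otimes W$ once a $W$-factor is realized as acting on $W$ inside $\gl(W)$. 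Given these formulas, the verification of $\partial(\alpha_i)$ and $\partial(\beta_i)$ is a finite, term-by-term calculation on the explicit highest weight vectors listed just before the statement; by weight considerations, the image of each $\alpha_i$ lies in $\Span{\tilde\alpha_1,\tilde\alpha_2,\tilde\alpha_3}$ and of each $\beta_i$ in $\Span{\tilde\beta_1,\tilde\beta_2}$, so only a handful of coefficients must actually be read off.

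For the four elements claimed to lie in $\partial(V^*\otimes(\lie{sp}(n)+\h))$, the first, $\partial(\alpha_2)$, is immediate once one verifies that $\alpha_2$ itself belongs to $V^*\otimes\lie{sp}(n)$, i.e.\ that the skew form in the last two factors commutes with the three complex structures---this follows by inspection from Table~\ref{table:isowithEH}. For the remaining three combinations $\tilde\alpha_2+\tilde\alpha_3+8\alpha_4$, $8i\alpha_5-\tilde\alpha_3+3\tilde\alpha_2$, and $\tilde\beta_2+2i\beta_4$, I would exploit the fact that, as spelled out in the proof of Proposition~\ref{prop:kernelr}, each element $p\in\lie{sp}(1)\oplus\R\subset\lie{b}$ acts on both $V$ (via $R_{-p}$ or $-\lambda\id$) and on $W$ (via $\widetilde\ad(p)$ or $-2\lambda\id$). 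Consequently, $\partial$ applied to a highest weight vector in $V^*\otimes(\lie{sp}(1)+\R)$ has one part in $\Lambda^2V^*\otimes V$ (expressible in the $\tilde\alpha_i$) and one part in $V^*\otimes W^*\otimes W$ (expressible in $\alpha_4$ or $\alpha_5$), coupled in a fixed numerical ratio. Choosing the appropriate preimages---for example combinations involving $v_1h_2\otimes i\in V^*\otimes\lie{sp}(1)$ together with scalar elements of $V^*\otimes\R$---and applying $\partial$, one obtains the stated combinations, possibly after subtracting a correction in $\partial(V^*\otimes\lie{sp}(n))$ already known to lie in $\im\partial_B$ by the first part of the lemma.

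The main difficulty is bookkeeping rather than ideas. The complex structure on $V$, the identifications in Tables~\ref{table:isowithEH} and~\ref{table:spnroots}, and the formulas for $\omega_s$ in \eqref{eqn:sp1actionasforms} together produce many signs and factors of $i$ that must be carefully tracked. However, each calculation takes place in a finite-dimensional weight space and is heavily constrained: once the total weight and the choice of $\Sp(n)\Sp(1)$-summand are fixed, only a few scalar coefficients remain free, so the computation is both finite and self-checking.
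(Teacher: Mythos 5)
Your proposal takes essentially the same route as the paper: the five identities are checked by direct expansion of $\partial$ on the listed highest weight vectors (the paper dismisses this as a straightforward computation), and the membership statements are obtained exactly as you describe, via the diagonal action of $\lie{sp}(1)\oplus\R\subset\lie{b}$ on $V$ and $W$ — the paper's explicit preimages are $4i\alpha_5-\alpha_1$, $\beta_1-2i\beta_4$ and $v_1h_2\otimes(-2\id_V-4\id_W)$, together with the observation that $\alpha_2\in V\otimes S^2E\subset V\otimes(\C\otimes\lie{sp}(n))$. The only cosmetic slip is that the "corrections" you invoke come from the first part of the lemma applied to $\alpha_1$ and $\beta_1$, which lie in $V\otimes\lie{sp}(1)$ rather than $V\otimes\lie{sp}(n)$; this does not affect the argument.
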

 \begin{proof}
The first part is a straightforward computation.
For the second part, observe that by Section~\ref{sec:representations} $\omega_2+i\omega_3\in\lie{sp}(1)\otimes\C$ acts on $W\cong\im\h$ as
\[2(-w^3\otimes w_1+w^1\otimes w_3-iw^1\otimes w_2+iw^2\otimes w_1)=-2i(w^1\otimes (w_2+iw_3)-(w^2+iw^3)\otimes w_1),\]
and $\omega_1$ acts as
\[-2w^2\otimes w_3+2w^3\otimes w_2.\]
It follows that $4i\alpha_5-\alpha_1$,   $\beta_1-2i\beta_4$ lie in $V\otimes\lie{b}$.

Now recall that $-\id_V-2\id_W$ lies in $\lie{b}$, and
\[\id_V=\frac12\left(-v_jh_2\otimes v_{n+j}h_1-v_{n+j}h_1\otimes v_jh_2+v_jh_1\otimes v_{n+j}h_2+v_{n+j}h_2\otimes v_jh_1\right),\]
giving 
\[\im\partial_B\ni\partial(v_1h_2\otimes (-2\id_V-4\id_W))=-\frac12\tilde\alpha_2-\frac12\tilde\alpha_3-4\alpha_4.\qedhere\]
\end{proof}
\begin{proposition}
\label{prop:basis}
The components isomorphic to $EH$ and $ES^3H$ inside $W_1$,  $\partial_1(W_1)$ and $W_2$ are identified by 
\[
W_1\ni \beta_1,\beta_2, \alpha_1, \alpha_3, \quad \partial_1(W_1)\ni \tilde\beta_1,\tilde\beta_2, \tilde\alpha_1-\tilde\alpha_3,\tilde\alpha_1-3\tilde\alpha_2, \quad  W_2\ni \beta_3;\\ 
\]
moreover  the following equivalences modulo $\im\partial_B$ hold:
\begin{gather*}
\begin{aligned}
 \tilde\alpha_1 &\equiv \frac38(\tilde\alpha_1-\tilde\alpha_3)- \frac18(\tilde\alpha_1-3\tilde\alpha_2), &
 \tilde\alpha_2 &\equiv \frac18(\tilde\alpha_1-\tilde\alpha_3)- \frac38(\tilde\alpha_1-3\tilde\alpha_2), \\
 \tilde\alpha_3 &\equiv -\frac58(\tilde\alpha_1-\tilde\alpha_3)- \frac18(\tilde\alpha_1-3\tilde\alpha_2), &
 \alpha_4 &\equiv \frac1{16}(\tilde\alpha_1-\tilde\alpha_3)+ \frac1{16}(\tilde\alpha_1-3\tilde\alpha_2), \\
 \alpha_5 &\equiv \frac i8(\tilde\alpha_1-\tilde\alpha_3)- \frac i8(\tilde\alpha_1-3\tilde\alpha_2).
\end{aligned}
\end{gather*}
\end{proposition}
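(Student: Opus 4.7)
The proposition splits into two independent parts: identifying specific generators for the $EH$ and $ES^3H$ isotypical summands of $W_1$, $\partial_1(W_1)$ and $W_2$; and deriving the equivalences modulo $\im\partial_B$. Both reduce to explicit linear algebra given the setup of Section~\ref{sec:representations} and the computations in Lemma~\ref{lemma:formulae}, and I would handle them in turn.

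For the module memberships, the key tool is the $\Sp(n)\Sp(1)$-decomposition $\Lambda^2(EH) = \Lambda^2 E \cdot S^2 H \oplus S^2 E \cdot \Lambda^2 H$, in which the first summand is $\lie{sp}(n)^\perp$ and the second is $\lie{sp}(n) = S^2E$. Decomposing each elementary wedge $v_ah_c\wedge v_bh_d$ into its symmetric-in-$E$/antisymmetric-in-$H$ and antisymmetric-in-$E$/symmetric-in-$H$ parts, one checks by direct substitution that every two-form coefficient of $\alpha_1,\alpha_3,\beta_1,\beta_2$ lies in $\lie{sp}(n)^\perp$; consequently these four highest-weight vectors lie in $W_1 = V^*\otimes\lie{sp}(n)^\perp$. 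By contrast the coefficient of $v_{n+j}h_2$ in $\alpha_2$ collapses to a nonzero multiple of $(v_j\cdot v_1)\otimes(h_1\wedge h_2)\in\lie{sp}(n)$, so $\alpha_2\notin W_1$. Since $W_1$ contains exactly two copies of $EH$ and two of $ES^3H$ (from the decomposition in Proposition~\ref{prop:kernelr}) and the four vectors are linearly independent, this yields the stated identification. That $\beta_3\in W_2 = V^*\otimes S^2_0 W$ is immediate by inspection of its $W\otimes W$-factors, each of which is symmetric and traceless. The generators of $\partial_1(W_1)$ then follow by assembling Lemma~\ref{lemma:formulae}: $\partial(\alpha_3)-\partial(\alpha_1)=\tilde\alpha_1-\tilde\alpha_3$ and $\partial(\alpha_3)+\partial(\alpha_1)=\tilde\alpha_1-3\tilde\alpha_2$, while $-\partial(\beta_1)=\tilde\beta_2$ and $-\tfrac12(\partial(\beta_1)+\partial(\beta_2))=\tilde\beta_1$.

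For the equivalences, the four distinguished elements of $\im\partial_B$ provided by Lemma~\ref{lemma:formulae} translate directly into
\[
\tilde\alpha_1 \equiv \tfrac12(\tilde\alpha_2 - \tilde\alpha_3), \quad \alpha_4 \equiv -\tfrac18(\tilde\alpha_2 + \tilde\alpha_3), \quad \alpha_5 \equiv \tfrac{i}{8}(3\tilde\alpha_2 - \tilde\alpha_3).
\]
Setting $A=\tilde\alpha_1-\tilde\alpha_3$ and $B=\tilde\alpha_1-3\tilde\alpha_2$ and using the first congruence to eliminate $\tilde\alpha_1$ yields the $2\times 2$ system $A\equiv\tfrac12\tilde\alpha_2-\tfrac32\tilde\alpha_3$, $B\equiv-\tfrac52\tilde\alpha_2-\tfrac12\tilde\alpha_3$, whose determinant is $-4$; inverting gives $\tilde\alpha_2\equiv\tfrac18A-\tfrac38B$ and $\tilde\alpha_3\equiv-\tfrac58A-\tfrac18B$, and back-substitution produces the remaining expressions for $\tilde\alpha_1,\alpha_4,\alpha_5$. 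The only delicate step in the whole argument is the bookkeeping in the first part---correctly sorting the three $EH$-summands of $V^*\otimes\so(V)$ into those belonging to $V^*\otimes\lie{sp}(n)$ versus $V^*\otimes\lie{sp}(n)^\perp$; once this is settled, the rest is mechanical substitution.
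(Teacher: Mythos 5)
Your proof is correct and follows essentially the same route as the paper: both arguments hinge on sorting the highest weight vectors according to whether their two-form part lies in $\lie{sp}(n)=S^2E\Lambda^2H$ or in $\lie{sp}(n)^\perp=\Lambda^2E S^2H$ (so that $\partial(\alpha_2)\in\im\partial_B$ while $\alpha_1,\alpha_3,\beta_1,\beta_2$ land in $W_1$), and then on inverting the linear relations supplied by Lemma~\ref{lemma:formulae}. The only difference is cosmetic: the paper uses the finer splitting $V\otimes\lie{sp}(1)+V\otimes S^2E+V\otimes\Lambda^2_0ES^2H$ and leaves the $2\times2$ inversion as ``a straightforward computation,'' which you carry out explicitly.
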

\begin{proof}
With respect to the splitting  
\[V\otimes \Lambda^2V=V\otimes \lie{sp}(1) + V\otimes S^2E+ V\otimes \Lambda_0^2ES^2H,\]
$\alpha_1,\beta_1$ lie in the first component, $\alpha_2$ in the second, $n\alpha_3-\alpha_1$,   $n\beta_2-\beta_1$  in the third. Hence, by Lemma~\ref{lemma:formulae}, $2\tilde\alpha_1+\tilde\alpha_3-\tilde\alpha_2$ is in the image of $\partial_B$. The rest of the statement is now a straightforward computation.
\end{proof}

\section{Intrinsic torsion conditions}
\label{sec:integrability}
In this section we study the intrinsic torsion of $B$-structures, establishing formulae to compute the ``components'' of the intrinsic torsion in terms of the exterior covariant derivative of suitable invariant tensorial forms. Then we specialize to  the qc case, showing that the intrinsic torsion lies in a specific invariant subspace.

The first problem is that  $\coker\partial_B$ is not completely reducible as a $B$-module, i.e. the sequence \eqref{eqn:nonsplittingseq} does not split. To work around this problem, we shall employ a reduction to  $\Sp(n)\h^*$, which amounts to choosing an arbitrary complement of $\mathcal{D}$ at each point. Nonetheless, we are still thinking of $B$ as the structure group of qc geometry, and the main result of this section is stated in terms of $B$-structures.

Under $\Sp(n)\h^*$, we have the decomposition
\[\Lambda^2T^*\otimes T = \im(\partial_B) \oplus\Lambda^2V^*\otimes W\oplus \partial_1(W_1)\oplus\partial_2(W_2).\]
Accordingly, the torsion of a connection splits into components as
\begin{equation}
 \label{eqn:torsiondecompositionB}
\Theta = \Theta_*+\Theta^Q +\Theta_1 +\Theta_2,
\end{equation}
and the qc condition reads
\[\Theta^Q=\Theta_0.\]
Moreover, the reduction makes $\Lambda T^*$ into a bigraded vector space,
\[\Lambda T^*=\bigoplus_{p,q}\Lambda^{p,q}, \quad \Lambda^{p,q}=\Lambda^p V^*\otimes \Lambda^q W^*.\]
Given $\alpha\in \Lambda T^*\otimes S$, we shall denote by $\alpha^{p,q}$ its component in $\Lambda^{p,q}\otimes S$. This notation carries over to tensorial forms, i.e.
\[(\alpha_\theta)^{p,q} = (\alpha^{p,q})_\theta, \quad \alpha\in\Omega^{p+q}(P,S).\]
We shall also need to consider the projection
\[\pi_{-1}\colon \im\partial_B\to \Lambda^{1,1}\otimes W,\]
and set
\[\Theta_{-1}=\pi_{-1}(\Theta_*).\]

\smallskip
We can now prove a  result analogous to Proposition~\ref{prop:dsigma22}. The first step is choosing two $B$-invariant tensorial forms, namely
\begin{align*}
\eta&\in\Omega^1(P,T/V), & \eta_\theta&\equiv w^1\otimes w_1+w^2\otimes w_2+w^3\otimes w_3,\\
\gamma&\in\Omega^5(P,T/V\otimes\kappa^*), & \gamma_\theta&\equiv \omega_s\wedge w^{123}\otimes (w_s\otimes w_{123}), 
\end{align*}
 where as usual $\kappa=\Lambda^3W^*$.
A direct computation with highest weight vectors, together with Lemma~\ref{lemma:formulae}, give:
\begin{lemma}
\label{lemma:f0}
The kernel of the map
\begin{align*}
z\colon \Lambda^2T^*\otimes T&\to  \Lambda^3T^*\otimes W, \quad \alpha\to  \alpha\hook ( \omega_s\wedge w^{123})\otimes w_s
\end{align*}
contains (and equals when $n>1$)
\[\partial_B\Bigl(T\otimes (\lie{sp}(n)+\Hom(W,V))+W^*\otimes\h\Bigr)+\partial_2(W_2)+2EH+ES^3H,\]
where $2EH+ES^3H\subset \partial_B(V^*\otimes\h)+\partial_1(W_1)$ contains the highest weight vectors
\begin{gather*}
(\tilde\alpha_2+\tilde\alpha_3+8\alpha_4)+2(\tilde\alpha_1-\tilde\alpha_3)+2(\tilde\alpha_1-3\tilde\alpha_2),\\
(8i\alpha_5-\tilde\alpha_3+3\tilde\alpha_2)-(\tilde\alpha_1-\tilde\alpha_3)+(\tilde\alpha_1-3\tilde\alpha_2),\quad
(\tilde\beta_2+2i\beta_4)-\tilde\beta_2.
\end{gather*}
Regardless of $n$,  the restriction  $z|_{\partial_1(W_1)}$ is injective, and $z|_{\im\partial_B}=g\circ \pi_{-1}$, where
\[g(\alpha)=\alpha\wedge(\Theta_0\hook\sigma)+\Theta_0\wedge(\alpha\hook\sigma).\]
\end{lemma}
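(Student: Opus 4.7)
The plan is to exploit that $z$ is $\Sp(n)\Sp(1)$-equivariant: since $\omega_s\otimes w_s$ and $w^{123}$ are $\Sp(n)\Sp(1)$-invariant, the map $z$ commutes with the group action and its kernel is an $\Sp(n)\Sp(1)$-submodule of $\Lambda^2T^*\otimes T$. I would then analyze $\ker z$ one isotypic component at a time, working in terms of the bigrading $\Lambda^{p,q}\otimes T$ and making essential use of the explicit highest weight vectors catalogued in Lemma~\ref{lemma:formulae} and Proposition~\ref{prop:basis}.

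For the containment direction, most of the listed pieces are killed by $z$ for elementary, almost degree-theoretic reasons. On $\partial_B(T\otimes\Hom(W,V))$ the $T$-valued part lies in $V$, so after hooking against $\omega_s\wedge w^{123}$ the $V$-leg has no place to contract. On $\partial_B(T\otimes\lie{sp}(n))$, the $\lie{sp}(n)$-action preserves each $\omega_s$ individually, and after summing over $s$ and hooking into $w^{123}$ one gets cancellation. The component $\partial_B(W^*\otimes\h)$ saturates the $\Lambda^3W^*$-leg, and $\partial_2(W_2)\subset V^*\otimes W^*\otimes W$ vanishes by a similar matching of $W^*$-factors. For the more subtle $2EH+ES^3H$ portion I would compute $z$ on the concrete highest weight vectors coming from Lemma~\ref{lemma:formulae} (namely $\partial(\alpha_2)$ and the elements $\tilde\alpha_2+\tilde\alpha_3+8\alpha_4$, $8i\alpha_5-\tilde\alpha_3+3\tilde\alpha_2$, $\tilde\beta_2+2i\beta_4$ coming from $V^*\otimes(\lie{sp}(n)+\h)$), re-express the results in the basis of $\partial_1(W_1)$ identified in Proposition~\ref{prop:basis}, and solve the resulting linear system to extract the exact combinations that vanish in $\ker z$. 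Equality for $n>1$ will then follow by a dimension count, playing the claimed generators against the decompositions in Proposition~\ref{prop:kernelr}.

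For the injectivity of $z|_{\partial_1(W_1)}$, I would evaluate $z$ on the basis of the $EH+ES^3H$ portion of $\partial_1(W_1)$ given by Proposition~\ref{prop:basis}, namely $\tilde\beta_1$, $\tilde\beta_2$, $\tilde\alpha_1-\tilde\alpha_3$, $\tilde\alpha_1-3\tilde\alpha_2$, and check that their images are linearly independent; on the remaining multiplicity-one components of $W_1$, Schur's lemma plus a single nonzero check on a highest weight vector will suffice. For the identity $z|_{\im\partial_B}=g\circ\pi_{-1}$, the starting observation is that $\omega_s\wedge w^{123}\otimes w_s$ can be identified with $\Theta_0\wedge\sigma$ after unpacking the $\kappa^*$-valued contraction, so $g(\alpha)=\alpha\wedge(\Theta_0\hook\sigma)+\Theta_0\wedge(\alpha\hook\sigma)$ is exactly the Leibniz expansion of the map $\alpha\mapsto\alpha\hook(\Theta_0\wedge\sigma)$ viewed as a derivation of the wedge product. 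I would verify the identity on $\Lambda^{1,1}\otimes W$ (where $\pi_{-1}$ is the identity) using the tensorial-form interior-product identities from Section~\ref{sec:gstructures}, and check that both sides vanish on the other bidegree components of $\im\partial_B$, which is already implicit in the first part of the proof.

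The main obstacle is the bookkeeping in the $2EH+ES^3H$ component: pinning down the precise coefficients in combinations such as $(\tilde\alpha_2+\tilde\alpha_3+8\alpha_4)+2(\tilde\alpha_1-\tilde\alpha_3)+2(\tilde\alpha_1-3\tilde\alpha_2)$ requires careful coordination between the $\partial$-formulas of Lemma~\ref{lemma:formulae} and the equivalences modulo $\im\partial_B$ in Proposition~\ref{prop:basis}, and a single misplaced sign or factor derails the dimension count that closes equality for $n>1$. That Lemma~\ref{lemma:formulae} and Proposition~\ref{prop:basis} appear to have been set up precisely to support this calculation suggests that the intended proof reduces to a linear-algebra computation glued together from those two results.
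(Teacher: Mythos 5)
Your overall strategy for identifying $\ker z$ --- equivariance, bidegree bookkeeping, explicit evaluation on the highest weight vectors of Lemma~\ref{lemma:formulae} and Proposition~\ref{prop:basis}, closed off by a dimension count for $n>1$ --- is what the paper intends (it offers no more than ``a direct computation with highest weight vectors''), and most of your containment checks are sound. Two of your stated reasons are off, though: $\partial_B(T^*\otimes\Hom(W,V))$ is killed not because ``the $V$-leg has no place to contract'' (a $V$-valued leg contracts perfectly well with $\omega_s\in\Lambda^2V^*$) but because the form part already carries a $W^*$ factor that wedges to zero against $w^{123}$; and the vanishing on $\partial_2(W_2)$ comes from the tracelessness of $S^2_0W$, since on $V^*\otimes W^*\otimes W$ the map $z$ factors through the trace via $w^j\wedge(w_k\hook w^{123})=\delta_{jk}w^{123}$.

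The genuine gap is in your treatment of $z|_{\im\partial_B}=g\circ\pi_{-1}$. First, $g$ is \emph{not} the Leibniz expansion of $\alpha\mapsto\alpha\hook(\Theta_0\wedge\sigma)$: that expansion is $(\alpha\hook\Theta_0)\wedge\sigma+\Theta_0\wedge(\alpha\hook\sigma)$, whereas $g(\alpha)=\alpha\wedge(\Theta_0\hook\sigma)+\Theta_0\wedge(\alpha\hook\sigma)$, and the first summands differ. Concretely, for $\alpha=e^i\wedge w^j\otimes w_k\in\Lambda^{1,1}\otimes W$ one computes $z(\alpha)=\delta_{jk}\sum_s e^i\wedge\omega_s\wedge w^{123}\otimes w_s$, which is only the second summand $\Theta_0\wedge(\alpha\hook\sigma)$, while $g(\alpha)$ carries the additional nonzero term $\alpha\wedge(\Theta_0\hook\sigma)=e^i\wedge\omega_j\wedge w^{123}\otimes w_k$. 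So your plan of ``verifying the identity on $\Lambda^{1,1}\otimes W$, where $\pi_{-1}$ is the identity'' fails at the first step, and the companion claim that both sides vanish on the other bidegree components of $\im\partial_B$ is also false: the only elements of $\im\partial_B$ with nonzero $\pi_{-1}$ come from $\partial_B(V^*\otimes\h)$, and their $\Lambda^{2,0}\otimes V$ parts $e^i\wedge e^j\otimes R_{-p}e_j$ are \emph{not} annihilated by $z$ --- which is precisely why the first part of the lemma lists only $W^*\otimes\h$ rather than $T^*\otimes\h$ among the annihilated subspaces. The identity holds only because, for $p\in\lie{sp}(1)\oplus\R$, the $(2,0)$ part contributes $\sum_s e^i\wedge(p\cdot\omega_s)\wedge w^{123}\otimes w_s$, which by the invariance of $\sum_s\omega_s\otimes w_s$ reassembles into exactly the missing term $\pi_{-1}(\partial(e^i\otimes p))\wedge(\Theta_0\hook\sigma)$. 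You need to carry out this cross-term computation on $\partial_B(V^*\otimes\h)$ explicitly; the componentwise argument you propose does not close.
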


\begin{proposition}
\label{prop:Bintrinsic}
Le $P$ be a  $B$-structure; for every connection on $P$ and every reduction to $\Sp(n)\h^*$, 
\begin{align*}
 \Theta_2+\Theta_{-1}&=(D\eta)^{1,1},&
\Theta^Q&=(D\eta)^{2,0},\\
(\Theta_2+\Theta_{-1})\hook\sigma&=(D\sigma)^{1,3},&
\Theta^Q\hook\sigma&=(D\sigma)^{2,2},\\
\Theta_1\hook\gamma&=(D\gamma)^{3,3}-g(\Theta_{-1}), &
\Theta^Q\hook\gamma&=(D\gamma)^{4,2}.
\end{align*}
These equations determine the intrinsic torsion in the sense that in each equation (except the last one for $n=1$) the components of the intrinsic torsion appearing on the left hand side are determined by the right hand side.
\end{proposition}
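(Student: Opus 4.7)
The starting point is that $\eta$, $\sigma$, $\gamma$ are by construction $B$-invariant tensorial forms, hence also $\Sp(n)\h^*$-invariant, and therefore parallel with respect to any $\Sp(n)\h^*$-connection. By \eqref{eqn:Dalphanablaalpha} this reduces every right-hand side to a pure torsion contraction: $D\alpha=\Theta\hook\alpha$ for each $\alpha\in\{\eta,\sigma,\gamma\}$. The plan is then to compute these contractions bidegree by bidegree in the bi-grading $\Lambda^{p,q}=\Lambda^pV^*\otimes\Lambda^qW^*$ induced by the $\Sp(n)\h^*$-reduction, using the decomposition $\Theta=\Theta_*+\Theta^Q+\Theta_1+\Theta_2$ from \eqref{eqn:torsiondecompositionB}. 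Each summand lives in a prescribed bidegree and has $T$-values in $V$ or $W$; a crucial observation is that $\im\partial_B$ has no component in $\Lambda^{2,0}\otimes W$, since $\lie{b}$ contains no $\Hom(V,W)$.

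The $\eta$ and $\sigma$ equations then follow directly from this bookkeeping. For $\eta$, the contraction $\Theta\hook\eta$ is just the $W$-valued part of $\Theta$, whose $(2,0)$ and $(1,1)$ bidegree components are $\Theta^Q$ and $\Theta_{-1}+\Theta_2$ respectively. For $\sigma$, $\Theta\hook\sigma$ again sees only the $W$-valued part of $\Theta$ but with bidegree shifted by $(0,2)$. For $\gamma$, the $(4,2)$-component of $\Theta\hook\gamma$ comes solely from $\Theta^Q$ (through the $w^{123}$ factor), giving the last identity. The $(3,3)$-component receives contributions from $\Theta_1$ and from the $\Lambda^{2,0}\otimes V$ piece of $\Theta_*$ (both through $\omega_s$), and from $\Theta_2$ and $\Theta_{-1}$ (both through $w^{123}$). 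The $\Theta_2$-contribution vanishes, since it reduces to the $W^*\otimes W$-trace of the $S^2_0(W)$-valued tensor $\Theta_2$, which is zero by definition; and the combined $\Theta_*$-contribution equals $g(\Theta_{-1})$ by Lemma~\ref{lemma:f0}, which identifies $z|_{\im\partial_B}$ with $g\circ\pi_{-1}$. This yields the fifth identity.

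For the determinacy claim, the $\eta$-equations are tautological; for the $\sigma$-equations one uses that the three $w_u\hook w^{123}$ are linearly independent in $\Lambda^{0,2}$, so $\hook\sigma$ is injective on $\Lambda^{2,0}\otimes W$; and for the $\gamma$-equation involving $\Theta_1$, one uses that $z|_{\partial_1(W_1)}$ is injective (Lemma~\ref{lemma:f0}). The last equation requires injectivity of the map $\alpha\mapsto(\alpha\wedge\omega_s)_{s=1,2,3}$ on $\Lambda^2V^*$, which holds for $n>1$ but fails for $n=1$, where the anti-self-dual summand of $\Lambda^2V^*=\Lambda^2_+\oplus\Lambda^2_-$ is killed by all three $\omega_s$, producing precisely the stated exception. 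The principal technical obstacle is the $(3,3)$-bidegree identification for $\gamma$, whose combination into $g(\Theta_{-1})$ relies on Lemma~\ref{lemma:f0}; everything else is bidegree bookkeeping.
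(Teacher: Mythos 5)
Your proposal is correct and follows essentially the same route as the paper: both reduce every right-hand side to $D\alpha=\Theta\hook\alpha$ via parallelism of the invariant tensorial forms, read off the $\eta$- and $\sigma$-identities by bidegree bookkeeping in $\Lambda^{p,q}$, and handle the $\gamma$-identities and the determinacy of $\Theta_1$ and $\Theta^Q$ through Lemma~\ref{lemma:f0} (your trace argument for the vanishing of the $\Theta_2$-contribution is just an explicit re-derivation of the inclusion $\partial_2(W_2)\subset\ker z$). The only genuine variation is your treatment of the $n=1$ exception via the splitting $\Lambda^2V^*=\Lambda^2_+\oplus\Lambda^2_-$ and Lefschetz-type injectivity of $\alpha\mapsto(\alpha\wedge\omega_s)_s$ for $n>1$, which is a more concrete substitute for the paper's appeal to the ``equals when $n>1$'' clause of Lemma~\ref{lemma:f0}, and is a welcome addition.
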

\begin{proof}
By \eqref{eqn:Dalphanablaalpha} $D\eta=\Theta\hook\eta$; more precisely,
\[(D\eta)^{2,0}=\Theta^{2,0}\hook\eta, \quad (D\eta)^{1,1}=\Theta^{1,1}\hook\eta.\]
These interior products correspond respectively to the contractions 
\[\begin{split}
(\Lambda^{2,0}\otimes W)&\otimes (W^*\otimes W) \to \Lambda^{2,0}\otimes W,\\
(\Lambda^{1,1}\otimes W )&\otimes (W^*\otimes W) \to \Lambda^{1,1}\otimes W.
 \end{split}\]
Moreover, since $\eta$ is the identity in $W^*\otimes W=\Hom(W,W)$, contraction with $\eta$ gives rise to two isomorphisms. Thus, $(D\eta)^{1,1}$ determines the component $\Lambda^{1,1}\otimes W$ of the torsion, which equals $\Theta_2+\Theta_{-1}$. Similarly, the  component $\Lambda^2V^*\otimes W$ can be read off $(D\eta)^{2,0}$, and the same arguments apply to $D\sigma$.

By the same token,
\[D\gamma=\Theta\hook\gamma;\]
since $(\Theta\hook\gamma)_\theta=z(\Theta_\theta)\otimes w_{123}$, and by  Lemma~\ref{lemma:f0} the restriction of $z$ to
\[\Lambda^{2,0}\otimes W\to \Lambda^{4,2}\otimes W\]
$z$ is injective, it follows that $(D\gamma)^{4,2}=\Theta^Q\hook\gamma$ determines $\Theta^Q$ if $n>1$.

Again by Lemma~\ref{lemma:f0},
\[(D\gamma)^{3,3}=\Theta_1\hook \gamma+g(\Theta_{-1}),\]
and this equation determines $\Theta_1$ because $z$ is injective on $\partial_1(W_1)$.
\end{proof}
\begin{example}
Going back to the example $\Sp(n+1)/\Sp(n)\Sp(1)$ of Section~\ref{sec:example}, observe that  $D$ is the horizontal part of $d$; therefore,
\begin{gather*}
D\sigma=(\omega_1\wedge w^{23}+ \omega_2\wedge e^{31}+\omega_3\wedge e^{12})\otimes w_{123},\\
D\eta=\omega_s\otimes w_s-2w_s\hook w^{123}\otimes w_s,\\
D\gamma= \omega_s\wedge (\omega_1\wedge w^{23}+ \omega_2\wedge e^{31}+\omega_3\wedge e^{12})\otimes (w_s\otimes w_{123}).
\end{gather*}
Working with the reduction to $\Sp(n)\h^*$ introduced in Section~\ref{sec:example}, Proposition~\ref{prop:Bintrinsic} gives
\[\Theta_1=\Theta_2=\Theta_{-1}=0, \quad \Theta^Q=\Theta_0.\]
\end{example}

We now turn to qc geometry. Let us consider the map
\[h\colon V^*\otimes W^*\otimes W\to \Lambda^{3,0}\otimes W\]
 obtained by tensoring the identity $W\to W$ with the map
\begin{equation*}
V^*\otimes W^*\to \Lambda^{3,0}, \quad  e^i\otimes w^j\to e^i\wedge\omega_j.
\end{equation*}

\begin{lemma}
\label{lemma:twomaps} 
The kernel of 
\[f\colon \partial_1(W_1)+\partial_2(W_2)\to \Lambda^{3,0}\otimes W, \quad f(\Theta_1,\Theta_2)=z(\Theta_1)-h(\Theta_2).\]
is isomorphic to
\[\begin{cases} ES^3H+ES^5H, & n=1 \\ ES^3H, & n>1\end{cases};\]
the component $ES^3H$ contains the  highest weight vector
\[4\tilde\beta_1+\tilde\beta_2+2i\beta_3=\partial_1(-2\beta_2-3\beta_1)+2i\partial_2(\beta_3).\]
\end{lemma}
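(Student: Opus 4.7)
The map $f$ is $\Sp(n)\Sp(1)$-equivariant, so the plan is to decompose source and target into isotypic components and analyze $f$ component by component, using Schur's lemma and the structural information of Lemma~\ref{lemma:f0} where possible, and resorting to explicit highest weight vector calculations only where multiplicities force it.

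First I would write down the source and target as $\Sp(n)\Sp(1)$-modules. Since $\partial_1|_{W_1}$ and $\partial_2|_{W_2}$ are injective (as part of Proposition~\ref{prop:kernelr}), the source $\partial_1(W_1)\oplus\partial_2(W_2)$ is isomorphic to $W_1\oplus W_2$. Reading off the decomposition from Proposition~\ref{prop:kernelr}, the module $ES^3H$ appears with multiplicity three for $n>1$ (namely $\tilde\beta_1,\tilde\beta_2\in\partial_1(W_1)$ and $\partial_2(\beta_3)\in\partial_2(W_2)$) and with multiplicity two for $n=1$ (since then $\beta_2=\beta_1$), while $ES^5H$ appears with multiplicity one inside $\partial_2(W_2)$. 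The target $\Lambda^{3,0}\otimes W = \Lambda^3(EH)\otimes S^2H$ is expanded from the decomposition of $\Lambda^3(EH)$ of Section~\ref{sec:representations} together with the Clebsch--Gordan rule $S^aH\otimes S^2H = S^{a+2}H + S^aH + S^{a-2}H$; for $n>1$ this produces $ES^3H$ with multiplicity two and $ES^5H$ with multiplicity one, while for $n=1$ it collapses to $ES^3H+EH$.

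Second, I would dispose of the isotypic components in which source multiplicity does not exceed target multiplicity. Lemma~\ref{lemma:f0} asserts that $z$ is injective on $\partial_1(W_1)$ and that $\partial_2(W_2)\subseteq\ker z$, so any $(\Theta_1,\Theta_2)\in\ker f$ obeys $h(\Theta_2)=z(\Theta_1)$; by equivariance the component in each type splits into a piece sourced from $\partial_1(W_1)$ (on which $z$ is injective) and a piece sourced from $\partial_2(W_2)$ whose image under $h$ must match the former. A comparison of multiplicities shows that the only isotypic components in which the source has genuine excess over the target are $ES^3H$ (for all $n$) and $ES^5H$ (only for $n=1$). In the $ES^5H$ case at $n=1$ the target is empty, so the whole $ES^5H$ summand of $\partial_2(W_2)$ is automatically in $\ker f$, contributing the claimed $ES^5H$ summand.

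The main obstacle, as expected, is the $ES^3H$ component. The plan is to evaluate $z(\tilde\beta_1)$, $z(\tilde\beta_2)$ and $h(\partial_2(\beta_3))$ by substituting the explicit weight vectors into the defining formulas (wedging with $\omega_s\wedge w^{123}$ for $z$, and $e^i\otimes w^j\mapsto e^i\wedge\omega_j$ for $h$), obtaining three highest weight vectors inside the (at most) two-dimensional space of $ES^3H$-highest weight vectors in the target; the unique linear relation among them identifies the kernel element. The answer is $4\tilde\beta_1+\tilde\beta_2+2i\,\partial_2(\beta_3)$, and its rewriting as $\partial_1(-2\beta_2-3\beta_1)+2i\,\partial_2(\beta_3)$ follows at once from Lemma~\ref{lemma:formulae}, since $\partial(-2\beta_2-3\beta_1)=-2(\tilde\beta_2-2\tilde\beta_1)+3\tilde\beta_2=4\tilde\beta_1+\tilde\beta_2$. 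For $n=1$ the same computation degenerates to a relation between $\tilde\beta_1$ and $\partial_2(\beta_3)$, again cutting out a single $ES^3H$. The hardest part is the explicit evaluation of $z$ and $h$ on these weight vectors and the verification that the resulting $3\times 2$ system has rank exactly two, so that the kernel in this isotypic component is exactly one copy of $ES^3H$ and no larger.
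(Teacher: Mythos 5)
The paper states this lemma without a written proof---it is presented, like Lemmas~\ref{lemma:f0} and~\ref{lemma:q}, as the outcome of a direct computation with highest weight vectors---and your plan reconstructs that computation along the expected lines. Your multiplicity counts for source and target are correct, the reduction to the $ES^3H$ and $ES^5H$ isotypic components via the injectivity of $z|_{\partial_1(W_1)}$ is sound, and the rewriting of the kernel vector via Lemma~\ref{lemma:formulae} is verified correctly. One simplification you overlook: once $z|_{\partial_1(W_1)}$ is known to be injective (Lemma~\ref{lemma:f0}), the $2\times 3$ matrix governing the $ES^3H$ component for $n>1$ automatically has rank exactly two, since its two columns coming from $z(\tilde\beta_1)$, $z(\tilde\beta_2)$ are already independent and the target multiplicity is two; so the explicit evaluation of $z$ and $h$ is needed only to determine the coefficients of the (automatically unique) relation, not to establish that the kernel is exactly one copy of $ES^3H$.

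There is, however, one genuine gap, and it sits exactly at the point that separates $n>1$ from $n=1$: the $ES^5H$ component for $n>1$. Your disposal of ``components where the source multiplicity does not exceed the target multiplicity'' is justified, where it works, by the injectivity of $z$ on $\partial_1(W_1)$; but the $ES^5H$ summand of the source lies entirely in $\partial_2(W_2)$, where $f$ acts as $-h$, and Lemma~\ref{lemma:f0} says nothing about $h$. Equal multiplicities (one and one) do not force injectivity: by Schur's lemma the restriction of $h$ to this component is either an isomorphism or zero, and you must check it is not zero. Concretely, the highest weight vector of $ES^5H\subset V^*\otimes S^2_0W$ is $v_1h_2\otimes (w^2+iw^3)\otimes(w_2+iw_3)$, whose image under $h$ is $v_1h_2\wedge(\omega_2+i\omega_3)\otimes(w_2+iw_3)=v_1h_2\wedge v_jh_2\wedge v_{n+j}h_2\otimes (w_2+iw_3)$; this vanishes identically when $n=1$ (consistently with the target containing no $ES^5H$ in that case) but is nonzero for $n>1$. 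Without this check your argument does not exclude $ES^5H$ from the kernel when $n>1$, which is precisely the case distinction the lemma asserts.
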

Notice that $W_1\oplus W_2$ has a unique submodule isomorphic to $ES^5H$; from now on,  $ES^5H$ will indicate this submodule unless otherwise specified. The component isomorphic to $ES^3H$ identified in the lemma will be denoted by $\widetilde{ES^3H}$, and we will denote by $\Theta_0^B$ the image of $\Theta_0$ in $\coker\partial_B$.
\begin{theorem}
\label{thm:reductiontheorem}
A $B$-structure is qc if and only if the intrinsic torsion takes values in
\[\begin{cases}\Theta_0^B + \widetilde{ES^3H}, & n>1\\
   \Theta_0^B + \widetilde{ES^3H}+ES^5H,& n=1
\end{cases}\]
\end{theorem}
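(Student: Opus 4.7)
The easy direction is immediate. If the intrinsic torsion of a $B$-structure lies in $\Theta_0^B + \widetilde{ES^3H}$ (or also $+ES^5H$ when $n=1$), then since $\widetilde{ES^3H}$ and $ES^5H$ are both contained in $\ker r = W_1\oplus W_2$ by Proposition~\ref{prop:kernelr}, its image under $r$ is $\Theta_0^Q$, so the underlying $Q$-structure is qc by Proposition~\ref{prop:qcGstructures}.

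For the converse, assume $P$ is a qc $B$-structure, so its $Q$-intrinsic torsion equals $\Theta_0^Q$. Proposition~\ref{prop:kernelr} then implies the $B$-intrinsic torsion has the form $\Theta_0^B + i(\Theta_1,\Theta_2)$ for some $(\Theta_1,\Theta_2) \in W_1\oplus W_2$. Lemma~\ref{lemma:twomaps} already identifies $\ker f$ with $\widetilde{ES^3H}$ (plus $ES^5H$ when $n=1$), so the theorem reduces to the single linear condition $f(\Theta_1,\Theta_2) = z(\Theta_1) - h(\Theta_2) = 0$.

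My strategy for this is to fix any connection on $P$ and any reduction to $\Sp(n)\h^*$, writing the torsion as $\Theta = \Theta_* + \Theta_0 + \Theta_1 + \Theta_2$ with $\Theta^Q = \Theta_0$ and $\Theta_* \in \im\partial_B$, and then apply the first Bianchi identity $D\Theta = \Omega \wedge \theta$, where $\Omega$ takes values in $\lie{b}$. Projecting this identity to a carefully chosen piece of $\Lambda^3 T^*\otimes T$ — one on which both $\Theta_1$ and $\Theta_2$ contribute but to which $\Omega \wedge \theta$ cannot contribute nontrivially, because of the restricted action of $\lie{b}$ on $T$ — gives a linear equation that, after simplification via Lemma~\ref{lemma:formulae} and the description of $\im\partial_B$ in Lemma~\ref{lemma:f0} (handling the $\Theta_*$ contribution and its $\Theta_{-1}$ projection, exactly as in Proposition~\ref{prop:Bintrinsic}), reduces to the desired equation $z(\Theta_1) = h(\Theta_2)$.

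The main obstacle is the combinatorial bookkeeping. The torsion has four distinct components, and in any chosen bidegree the Bianchi identity combines them with contributions from the connection form and the $\lie{b}$-action; the equation $f(\Theta_1,\Theta_2)=0$ only emerges after using the explicit highest-weight-vector formulas of Lemma~\ref{lemma:formulae} together with the identification of $\ker z$ in Lemma~\ref{lemma:f0} to cancel the nuisance terms. Once this is done, the dimensional and highest-weight computation of $\ker f$ in Lemma~\ref{lemma:twomaps} completes the argument.
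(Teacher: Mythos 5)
Your first two paragraphs are correct and match the paper: the easy direction via $r$, the reduction of the hard direction to the single relation $z(\Theta_1)=h(\Theta_2)$, and the appeal to Lemma~\ref{lemma:twomaps} to identify $\ker f$ with $\widetilde{ES^3H}$ (plus $ES^5H$ for $n=1$) are exactly how the argument is organised. The divergence, and the gap, is in how you propose to obtain $z(\Theta_1)=h(\Theta_2)$. You project the first Bianchi identity $D\Theta=\Omega\wedge\theta$ onto a component of $\Lambda^3T^*\otimes T$ chosen so that $\Omega\wedge\theta$ drops out. But by \eqref{eqn:Dalphanablaalpha}, $D\Theta=\langle\nabla\Theta,\frac12\theta\wedge\theta\rangle+\Theta\hook\Theta$: the left-hand side is \emph{first order} in the torsion, and killing the curvature term is not enough — you must also show that the derivative term $\langle\nabla\Theta,\frac12\theta\wedge\theta\rangle$ does not contribute to your chosen component, or you obtain a differential identity involving $\nabla\Theta_1$, $\nabla\Theta_2$, $\nabla\Theta_*$ rather than the algebraic relation you need. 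Your proposal never addresses this term. The relevant component is forced on you (it must be $\Lambda^{3,0}\otimes T/V$, since that is where $\Theta_1\hook\Theta_0$ and $\Theta_0\hook\Theta_2$ land), and there the derivative term does happen to vanish — but only because of the qc hypothesis $\Theta^{2,0}_W=\Theta_0$ together with the fact that $\lie{sp}(n)+\h$ fixes $\Theta_0$ while $\Hom(W,V)$ moves $\Lambda^{2,0}\otimes W$ into $\Lambda^{1,1}\otimes W+\Lambda^{2,0}\otimes V$. That verification is the actual content of the step and cannot be waved through as ``bookkeeping''.

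The paper avoids this issue entirely by a structurally different differentiation: under the qc condition one has the identity of tensorial forms $\gamma=\Theta_0\wedge\sigma=D\eta\wedge\sigma$, and applying $D$ gives $D\gamma=D^2\eta\wedge\sigma+D\eta\wedge D\sigma$ with $D^2\eta\wedge\sigma=\Omega\wedge\eta\wedge\sigma=0$; since $\eta$, $\sigma$, $\gamma$ are $B$-invariant hence parallel, each of $D\eta$, $D\sigma$, $D\gamma$ is purely algebraic in the torsion (Proposition~\ref{prop:Bintrinsic}), so no $\nabla\Theta$ terms ever appear. Reading off the $(3,3)$-component, observing that $\Theta_2\hook\sigma=0$ because $\Theta_2$ is trace-free, and cancelling $g(\Theta_{-1})$ from both sides yields $z(\Theta_1)=h(\Theta_2)$ directly. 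If you want to keep your route, you must add the explicit check that $[\nabla\Theta]$ has no component in $V^*\otimes\Lambda^{2,0}\otimes(T/V)$ and then verify (via Lemma~\ref{lemma:f0}) that the residual $\Theta_*$-terms $[\Theta_*]^{2,0}_V\hook\Theta_0+\Theta_0\hook\Theta_{-1}$ cancel; as written, the argument does not close.
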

\begin{proof}
Choose an arbitrary connection. The qc condition $\Theta^Q=\Theta_0$ implies
\[\gamma = \Theta_0\wedge \sigma= D\eta\wedge\sigma,\]
whence
\[D\gamma=D^2\eta\wedge\sigma+D\eta\wedge D\sigma;\]
however, if $\Omega$ is the curvature, $D^2\eta\wedge\sigma=\Omega\wedge\eta\wedge\sigma=0$, so
\[D\gamma-D\eta\wedge D\sigma=0.\]
Decomposing into components, we get
\[(D\gamma)^{3,3}-(D\eta)^{1,1}\wedge(D\sigma)^{2,2}-(D\eta)^{2,0}\wedge (D\sigma)^{1,3}=0,\]
whence, by Proposition~\ref{prop:Bintrinsic},
\[(D\gamma)^{3,3}= \Theta_2\wedge (\Theta_0\hook\sigma)+\Theta_0\wedge (\Theta_2\hook\sigma) + \Theta_{-1}\wedge(\Theta_0\hook\sigma)+\Theta_0\wedge(\Theta_{-1}\hook\sigma).\]
Up to a contraction $\Lambda^3W^*\otimes \kappa^*\cong\R$, the map 
\[\Theta_2\to \Theta_2\hook\sigma\]
corresponds to the trace $V^*\otimes W^*\otimes W\to V^*$; by construction this is zero. 
It follows that
\[(D\gamma)^{3,3}= \Theta_2\wedge (\Theta_0\hook\sigma)+g(\Theta_{-1}),\]
so by Proposition~\ref{prop:Bintrinsic} $z(\Theta_1)=h(\Theta_2)$, and the statement follows from Lemma~\ref{lemma:twomaps}.
\end{proof}
\section{A further reduction}
Theorem~\ref{thm:reductiontheorem} relies on the decomposition \eqref{eqn:torsiondecompositionB}, which depends in turn on the choice of a reduction of the structure group to $\Sp(n)\h^*$. In this section we illustrate how the choice of this reduction affects the torsion, and show that it is  canonical {\em in part}; in other words, we obtain a canonical reduction to an intermediate group $K$, \[\Sp(n)\h^*\subset K\subset B.\]
More precisely, the splitting under $\Sp(n)\h^*$
\[\Hom(W,V)=EH \times ES^3H,\]
is also a product of abelian Lie groups, so that $EH$ and $ES^3H$ appear as subgroups of $B$. Notice that this $EH$ is isomorphic to $V^*$, rather than  $V$, as an $\Sp(n)\h^*$-module; however, if we identify $V$ and $V^*$, then
\begin{equation}
 \label{eqn:explicitEH}
EH=\bigl\{w^s\otimes v\hook\omega_s\mid v\in V\bigr\}.
\end{equation}
We set
\[K=\Sp(n)\h^*\ltimes EH.\]
We will see that an arbitrary $B$-structure has a canonical $K$-reduction induced by the choice of a complement of $\widetilde{ES^3H}$ in $W_1+W_2$, but in the qc case, thanks to Theorem~\ref{thm:reductiontheorem}, the reduction is independent of the choice of complement of $\widetilde{ES^3H}$.

The key fact is that the action of the subgroup $\Hom(W,V)\subset B$ on $\Theta_0$  is ``linearized'' when taking the quotient by $\im\partial_B$, as shown in the following lemma.
\begin{lemma}
\label{lemma:linearized}
The Lie group $\Hom(W,V)$ acts on $\Theta_0^B$ with stabilizer equal to $EH$; the orbit is $\Theta^B_0+\widetilde{ES^3H}$.
 \end{lemma}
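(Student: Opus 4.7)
My plan is to pass to $\coker\partial_B$, observe that the induced orbit map is linear, and identify the resulting $\Sp(n)\Sp(1)$-equivariant map by splitting $\Hom(W,V)$ into its two $\Sp(n)\h^*$-irreducible summands.

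The first step is to show that, modulo $\im\partial_B$, the action is linear in $h\in\Hom(W,V)$. An element $g(h)\in\Hom(W,V)$ acts on $T=V\oplus W$ by $(v,w)\mapsto(v+h(w),w)$; dually, on $V^*\subset T^*$ it acts by $\alpha\mapsto\alpha-\alpha\circ h$, so on $\omega_s\in\Lambda^2V^*$ it produces pieces in $\Lambda^{2,0}$, $\Lambda^{1,1}$ and $\Lambda^{0,2}$ of order $0,1,2$ in $h$ respectively. Expanding
\[g(h)\cdot\Theta_0\;=\;(g(h)\cdot\omega_s)\otimes(e_{4n+s}+h(w_s)),\]
every term of order $\geq 2$ in $h$ lands in $\Lambda^{0,2}\otimes T+\Lambda^{1,1}\otimes V$, which is contained in $\im\partial_B$ by the computation \eqref{eqn:coker} in the proof of Lemma~\ref{lemma:projectiontorsion}. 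Hence modulo $\im\partial_B$ only the linear terms survive. Moreover, $\Hom(W,V)$ acts trivially on $T/V$, so the orbit preserves $\Theta_0^Q\in\coker\partial_Q$, and the induced map
\[\mu\colon \Hom(W,V)\longrightarrow\ker r\subset\coker\partial_B\]
is $\Sp(n)\h^*$-equivariant and linear, with values in the two summands $p_{VVV}(\Lambda^{2,0}\otimes V)$ and $p_{VWW}(\Lambda^{1,1}\otimes W)$ of Lemma~\ref{lemma:projectiontorsion}.

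The second step is to show the $EH$ summand of $\Hom(W,V)$ lies in $\ker\mu$. An element of this summand has the explicit form $h_v=w^s\otimes(v\hook\omega_s)$ with $v\in V$, as in \eqref{eqn:explicitEH}. A direct calculation, using the Lie algebra formulas \eqref{eqn:sp1actionasforms} for the action of $\lie{sp}(1)$ on $V^*$, evaluates the linear action of $h_v$ on $\omega_s$ and yields
\[\mu(h_v)\;=\;\omega_s\otimes(v\hook\omega_s)\ +\ [\text{explicit term in }\Lambda^{1,1}\otimes W],\]
which I would recognize as $\partial_B$ applied to an element of the $S^2ES^2H+S^2H$ diagonal subspace of $\ker\partial_B$ from Corollary~\ref{cor:kerpartialB}. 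The key relation is the one at the end of the proof of Lemma~\ref{lemma:formulae}, namely $\partial(v\otimes(-\id_V-2\id_W))=-\tfrac12\tilde\alpha_2-\tfrac12\tilde\alpha_3-4\alpha_4$, once combined with the rest of the diagonal contribution coming from $V^*\otimes\Hom(W,V)$.

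The third step is formal. Since $\Hom(W,V)/EH\cong ES^3H$ is $\Sp(n)\Sp(1)$-irreducible, Schur's lemma implies that the restriction of $\mu$ to any $\Sp(n)\Sp(1)$-complement of $EH$ is either zero or injective with image an $ES^3H$-submodule of $\ker r$. To identify this image as the distinguished submodule $\widetilde{ES^3H}$ from Lemma~\ref{lemma:twomaps}, I would evaluate $\mu$ on a highest weight vector of $ES^3H\subset\Hom(W,V)$ and match the result with $4\tilde\beta_1+\tilde\beta_2+2i\beta_3$; the dimension count $\dim ES^3H=8n=\dim\Hom(W,V)-\dim EH$ then forces $\ker\mu=EH$ and $\im\mu=\widetilde{ES^3H}$. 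The main obstacle is the explicit verification in the second step: one has to organize the contributions coming from the $\Lambda^{2,0}\otimes V$ and $\Lambda^{1,1}\otimes W$ pieces and see their sum as the image under $\partial_B$ of a single diagonal kernel element. Once this is in place, both the stabilizer and orbit identifications follow from the formal argument of step three.
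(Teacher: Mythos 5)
Your overall strategy is the paper's own: linearize the action modulo $\im\partial_B$ (using that the quadratic and higher order terms fall into $\Lambda^{0,2}\otimes T+\Lambda^{1,1}\otimes V\subset\im\partial_B$), then evaluate the resulting equivariant linear map on highest weight vectors of the two summands $EH$ and $ES^3H$ of $\Hom(W,V)$, concluding by Schur's lemma and the explicit identification of $\widetilde{ES^3H}$ via $4\tilde\beta_1+\tilde\beta_2+2i\beta_3$. Steps one and three are fine.

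Step two, however, contains a concrete error as written. You propose to recognize $\mu(h_v)$ as ``$\partial_B$ applied to an element of the $S^2ES^2H+S^2H$ diagonal subspace of $\ker\partial_B$ from Corollary~\ref{cor:kerpartialB}''. This cannot work for two reasons. First, $\partial_B$ applied to anything in $\ker\partial_B$ is zero in $\Lambda^2T^*\otimes T$, whereas $\mu(h_v)$ is visibly nonzero there; what you actually need is that $\mu(h_v)$ lies in $\im\partial_B$, i.e.\ equals $\partial_B$ of some element of $T^*\otimes\lie{b}$ that is \emph{not} in the kernel. Second, the diagonal summand of Corollary~\ref{cor:kerpartialB} sits inside $W^*\otimes(\lie{sp}(n)+\h)+V^*\otimes\Hom(W,V)$, and $\partial$ maps that space into $\Lambda^{1,1}\otimes V+\Lambda^{0,2}\otimes W$ — bidegrees disjoint from $\Lambda^{2,0}\otimes V+\Lambda^{1,1}\otimes W$, where $\mu(h_v)$ lives. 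The correct source is $\partial\bigl(V^*\otimes(\lie{sp}(n)+\h)\bigr)$: since $\h=\lie{sp}(1)\oplus\R$ acts on both $V$ and $W$, an element $v\otimes p$ with $p\in\h$ contributes simultaneously to $\Lambda^{2,0}\otimes V$ and $\Lambda^{1,1}\otimes W$, and the relations listed in the second part of Lemma~\ref{lemma:formulae} (namely that $\tilde\alpha_2+\tilde\alpha_3+8\alpha_4$, $8i\alpha_5-\tilde\alpha_3+3\tilde\alpha_2$ and $\partial(\alpha_2)$ lie in $\im\partial_B$), as packaged in Proposition~\ref{prop:basis}, are exactly what shows that the image of the $EH$ highest weight vector, $\frac12\tilde\alpha_1-\alpha_4+i\alpha_5$, vanishes modulo $\im\partial_B$. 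With that substitution your argument goes through and coincides with the paper's proof.
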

\begin{proof}
The action of the Lie group $\Hom(W,V)$ induces an infinitesimal action of its Lie algebra, which coincides with $\Hom(W,V)$ itself. Denoting the former action by juxtaposition and the latter by $\cdot$, we see that
\[w^i\otimes e_j\cdot \alpha = -w^i\wedge e_j\hook\alpha, \quad \alpha\in\Lambda^2V^*.\]
Therefore
\[g\Theta_0=\Theta_0 + g\cdot\Theta_0 \mod \Lambda^{0,2}\otimes T + \Lambda^{1,1}\otimes V\subset \im\partial_B,\]
i.e. $g\Theta_0^B=\Theta_0^B + g\cdot\Theta_0^B$.
The Lie algebra action gives an $\Sp(n)\Sp(1)$-invariant map
\[\Hom(W,V)\otimes \Lambda^2T^*\otimes T\to \Lambda^2T^*\otimes T;\]
since $\Theta_0$ is invariant, the action on $\Theta_0$ gives an invariant map
\[\Hom(W,V)\to \Lambda^2T^*\otimes T.\]
Under this map, the highest weight vector 
\[(w^2+iw^3)\otimes v_1h_2\in ES^3H\subset \Hom(W,V)\]
has image 
\[\tilde\beta_1+\frac i2\beta_3 -\frac i2\beta_4\equiv \tilde\beta_1+\frac i2\beta_3 +\frac 14\tilde\beta_2,\]
which by definition lies in $\widetilde{ES^3H}$.

On the other hand the highest weight vector 
\[(w^2+iw^3)\otimes v_1h_1 + i w^1\otimes v_1h_2\in EH\subset \Hom(W,V)\]
has image  
\begin{multline*}
(\omega_2+i\omega_3)\otimes v_1h_1+i\omega_1\otimes v_1h_2 -v_1h_2\wedge w^s\otimes w_s\\
+iv_1h_1 \wedge (w^1\wedge(w_2+iw_3))+iv_1h_2\wedge(w_2\wedge w_3)
=\frac12\tilde\alpha_1-\alpha_4+i\alpha_5,
\end{multline*}
which by Proposition~\ref{prop:basis} lies in $\im\partial_B$.
\end{proof}

\begin{theorem}
\label{thm:canonicalKreduction}
For any fixed $\Sp(n)\h^*$-invariant complement $\widetilde{ES^3H}^\perp$ of $\widetilde{ES^3H}$ in $W_1+W_2$, every $B$-structure has a unique $K$-reduction such that the restriction of the $B$-intrinsic torsion takes values in $\Theta_0^B+\widetilde{ES^3H}^\perp$.
\end{theorem}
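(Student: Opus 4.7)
The plan is to construct $P'\subset P$ as the locus where $\Theta^B$ takes values in $\Theta_0^B+\widetilde{ES^3H}^\perp$, and to show this is a principal $K$-subbundle. The strategy is to upgrade Lemma~\ref{lemma:linearized} to a description of the $\Hom(W,V)$-action on the entire affine subspace $\Theta_0^B+\ker r\subset\coker\partial_B$ as a translation through a group homomorphism $\mu\colon\Hom(W,V)\to\widetilde{ES^3H}$ with kernel $EH$.

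The first step is to observe that $\Hom(W,V)\subset B$ acts trivially on $\ker r=W_1+W_2$, viewed as a subspace of $\coker\partial_B$. Indeed $\Hom(W,V)$ acts trivially on both $V$ and $T/V$, hence on every tensor space built from these; since $W_1$ and $W_2$ are images of such spaces under the $B$-equivariant maps $p_{VVV}$ and $p_{VWW}$ of Lemma~\ref{lemma:projectiontorsion}, they inherit a trivial $\Hom(W,V)$-action. As $\Hom(W,V)$ is abelian and acts linearly on $\coker\partial_B$, combining this with Lemma~\ref{lemma:linearized} gives
\[g\cdot\xi=\xi+\mu(g),\qquad \mu(g):=g\cdot\Theta_0^B-\Theta_0^B,\]
for every $\xi\in\Theta_0^B+\ker r$ and $g\in\Hom(W,V)$. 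The map $\mu$ is then a group homomorphism with image $\widetilde{ES^3H}$ and kernel $EH$, inducing an $\Sp(n)\h^*$-equivariant bijection $\Hom(W,V)/EH\xrightarrow{\cong}\widetilde{ES^3H}$.

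With this in hand, I would set $P'=\{u\in P:\Theta^B(u)\in\Theta_0^B+\widetilde{ES^3H}^\perp\}$ and verify the three required properties. For existence at each fibre: given $u_0\in P_x$ with $\Theta^B(u_0)\in\Theta_0^B+\ker r$ (which holds in the qc case by Theorem~\ref{thm:reductiontheorem}), split $\Theta^B(u_0)-\Theta_0^B=\xi_1+\xi_2$ along $\widetilde{ES^3H}\oplus\widetilde{ES^3H}^\perp$, pick $g\in\Hom(W,V)$ with $\mu(g)=\xi_1$, and compute $\Theta^B(u_0\cdot g)=g^{-1}\cdot\Theta^B(u_0)=\Theta^B(u_0)-\mu(g)=\Theta_0^B+\xi_2$. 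For $K$-invariance: $\Sp(n)\h^*$ fixes $\Theta_0^B$ and preserves $\widetilde{ES^3H}^\perp$ by hypothesis, while $EH\subset\ker\mu$ acts as the identity on $\Theta_0^B+\ker r$. For uniqueness: if $u$ and $u\cdot b$ both lie in $P'$ with $b=sh$, $s\in\Sp(n)\h^*$, $h\in\Hom(W,V)$, one reduces to $s=1$ by $\Sp(n)\h^*$-invariance, and then $\mu(h)\in\widetilde{ES^3H}\cap\widetilde{ES^3H}^\perp=\{0\}$ forces $h\in EH$, so $b\in K$.

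The main obstacle is the first step, namely verifying that $\ker r$ carries a trivial $\Hom(W,V)$-action so that the full $\Hom(W,V)$-action on $\Theta_0^B+\ker r$ collapses to the affine translation $\mu$. Once this is in place, $P'$ is simply the zero set of the smooth $\Sp(n)\h^*$-equivariant map sending $u$ to the $\widetilde{ES^3H}$-component of $\Theta^B(u)-\Theta_0^B$, and the three properties above exhibit it as the desired principal $K$-subbundle.
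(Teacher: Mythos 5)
Your proof is correct and follows essentially the same route as the paper's: you define the subset of frames where $\Theta^B-\Theta_0^B$ lands in $\widetilde{ES^3H}^\perp$ and use Lemma~\ref{lemma:linearized} to see that exactly one coset of $EH$ in $\Hom(W,V)$ moves a given frame into it. The one point you make explicit that the paper leaves implicit is that $\Hom(W,V)$ acts trivially on $\ker r$, which is precisely what justifies the affine-translation formula $g\cdot\xi=\xi+\mu(g)$ (and, in the paper's version, the step asserting that $f(ug)$ lies in $\widetilde{ES^3H}^\perp$ if and only if $\Theta^B(u)-g\Theta_0^B$ does).
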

\begin{proof}
Let $\Theta^B\colon P\to\coker\partial_B$ be the intrinsic torsion. By Lemma~\ref{lemma:linearized}, $\Theta_0^B$ has stabilizer $K$; consider the $K$-equivariant map 
\[f\colon P\to\coker\partial_B, \quad f(u)=\Theta^B(u)-\Theta_0^B.\]
Since $\Theta^Q(u)=r(\Theta^B(u))=\Theta_0^Q$,  the map $f$ takes values in $\ker r$, which by Proposition~\ref{prop:kernelr} equals $W_1+W_2$.

Now set 
\[\tilde P=\{u\in P\mid f(u)\in \widetilde{ES^3H}^\perp\}.\]
By construction, $\tilde P$ is closed under the action of $K$. Conversely, given any $u\in P$, $g\in\Hom(W,V)$,
\[f(ug)=\Theta^B(ug)-\Theta_0^B = g^{-1}(\Theta^B(u)-g\Theta_0^B)\]
lies in  $\widetilde{ES^3H}^\perp$ if and only if so does $\Theta^B(u)-g\Theta_0^B$. By  Lemma~\ref{lemma:linearized}, this condition is satisfied for exactly one $g\in ES^3H$. Thus, $\tilde P$ is a $K$-structure.
\end{proof}

Combining this result with Theorem~\ref{thm:reductiontheorem}, and specializing to the qc case, we find
\begin{corollary}
\label{cor:Kreduction}
Every qc $B$-structure has a unique $K$-reduction $P$ such that 
\[\begin{cases}\Theta^B(u)=\Theta_0^B, & n>1 \\ \Theta^B(u)\in \Theta_0^B+ES^5H,& n=1\end{cases}\]
for all $u$ in $P$. 
\end{corollary}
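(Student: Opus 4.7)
The plan is to combine Theorem~\ref{thm:reductiontheorem} (which confines the intrinsic torsion of a qc $B$-structure to $\Theta_0^B+\widetilde{ES^3H}$, or to $\Theta_0^B+\widetilde{ES^3H}+ES^5H$ when $n=1$) with Theorem~\ref{thm:canonicalKreduction} (which produces a canonical $K$-reduction whose torsion lies in $\Theta_0^B+\widetilde{ES^3H}^\perp$ for any chosen $\Sp(n)\h^*$-invariant complement $\widetilde{ES^3H}^\perp$ of $\widetilde{ES^3H}$ in $W_1+W_2$). Intersecting these two loci should leave exactly the subspace described in the statement.

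For $n>1$ I would choose an arbitrary complement $\widetilde{ES^3H}^\perp$ and let $P$ be the induced $K$-reduction; on $P$ the torsion lies in
\[(\Theta_0^B+\widetilde{ES^3H}^\perp)\cap(\Theta_0^B+\widetilde{ES^3H})=\{\Theta_0^B\}.\]
For $n=1$, one observes that $\widetilde{ES^3H}$ and $ES^5H$ are non-isomorphic irreducible submodules of $W_1+W_2$; they remain irreducible as $\Sp(n)\h^*$-modules since the extra $\R^+$ factor of $\h^*$ acts by scalars (cf.\ the proof of Proposition~\ref{prop:kernelr}), so by Schur's lemma their intersection is trivial and $ES^5H$ extends to an $\Sp(n)\h^*$-invariant complement $\widetilde{ES^3H}^\perp$ of $\widetilde{ES^3H}$. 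The resulting $K$-reduction $P$ then has torsion in
\[(\Theta_0^B+\widetilde{ES^3H}^\perp)\cap(\Theta_0^B+\widetilde{ES^3H}+ES^5H)=\Theta_0^B+ES^5H.\]

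Uniqueness in both cases is automatic: any $K$-reduction whose torsion satisfies the condition of the corollary also satisfies the defining property of Theorem~\ref{thm:canonicalKreduction} for the specific complement just constructed (for $n>1$ this holds trivially for every complement, since the torsion equals $\Theta_0^B$), and so coincides with $P$ by the uniqueness clause of that theorem. I expect no genuine obstacle; the corollary is a direct consequence of the two theorems together with the small representation-theoretic observation needed for the $n=1$ case.
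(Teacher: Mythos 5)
Your proof is correct and follows exactly the route the paper intends: the paper derives Corollary~\ref{cor:Kreduction} simply by ``combining'' Theorem~\ref{thm:canonicalKreduction} with Theorem~\ref{thm:reductiontheorem}, which is precisely your intersection argument. The only detail you add beyond the paper's one-line derivation is the (correct) observation that for $n=1$ one should choose the $\Sp(n)\h^*$-invariant complement $\widetilde{ES^3H}^\perp$ to contain the unique $ES^5H$ submodule of $W_1\oplus W_2$, which is legitimate since $ES^5H$ and $\widetilde{ES^3H}$ are non-isomorphic irreducibles.
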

We shall refer to such a $K$-structure as a \dfn{qc $K$-structure}. We shall say a qc $K$-structure $P$ is \emph{integrable} if $\Theta^B=\Theta^B_0$ identically on $P$. This condition is automatic when $n>1$. We shall see in Corollary~\ref{cor:integrableisintegrable} that this definition agrees with that of  \cite{Duchemin}.

\begin{remark}
An alternative approach more akin to Biquard's would be to fix a compatible metric on $\mathcal{D}$; in other words, by choosing an arbitrary reduction from $B$ to  $\tilde B=\Sp(n)\Sp(1)\ltimes \Hom(W,V)$. Proceeding as in Lemma~\ref{lemma:linearized}, we would see that $\im\partial_{\tilde B}$ does not contain
$\frac12\tilde\alpha_1-\alpha_4+i\alpha_5$, and so the construction of Corollary~\ref{cor:Kreduction} would give a ``canonical'' reduction to $\Sp(n)\Sp(1)$, depending only on the choice of the metric.
However, as we are mainly interested in the intrinsic geometry of qc structures, we will refrain from fixing a metric until Section~\ref{sec:posit}.
\end{remark}
\begin{remark}
Since the projection of $\widetilde{ES^3H}$ onto $W_2$ is injective, the $K$-reduction is characterized by the condition $\Theta_2=0$, or $\Theta_2\in ES^5H$ when $n=1$. In practice, if one finds a connection which satisfies this condition with respect to some frame, then the frame is compatible with the $K$-reduction of Corollary~\ref{cor:Kreduction}.
\end{remark}

\section{$K$-intrinsic torsion}
\label{sec:kintrinsictorsion}
Corollary~\ref{cor:Kreduction} motivates us to study the intrinsic torsion of qc $K$-structure. In fact, we have defined integrable qc $K$-structures by the condition that the restriction of the $B$-intrinsic torsion be constant, equal to $\Theta_0^B$. In this section we show that the $K$-intrinsic torsion of an integrable qc $K$-structure is also constant. 

It follows that a qc $K$-structure admits a family of connections whose torsion equals $\Theta_0$; this family is parametrized by sections of a bundle with fibre $\ker\partial_K$, which is identified by the following:
\begin{lemma}
\label{lemma:partialK}The kernel of $\partial_K$ is the $S^2H$ containing
\begin{multline}
\label{eqn:kerpartialK}
 v_jh_2\otimes ((w^2+iw^3)\otimes v_{n+j}h_1 + iw^1\otimes v_{n+j}h_2))
- v_{n+j}h_2\otimes ((w^2+iw^3)\otimes v_{j}h_1 + iw^1\otimes v_{j}h_2))\\
-(w^2+iw^3)\otimes(\id_V+2\id_W) + iw^1\otimes (\omega_2+i\omega_3)-i(w^2+iw^3)\otimes \omega_1 ,
\end{multline}
and the inclusion $V^*\otimes \frac{\lie{b}}{\lie{k}}\to T^*\otimes\frac{\lie{b}}{\lie{k}}$ induces an exact sequence of $K$-modules
\begin{equation}
 \label{eqn:cokerKB}
0\to S^2E S^2H\to V^*\otimes \frac{\lie{b}}{\lie{k}}\to \coker\partial_K\to\coker\partial_B\to 0
\end{equation}
\end{lemma}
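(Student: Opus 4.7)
The plan is to apply the snake lemma to the short exact sequence of $K$-modules
\[0\to T^*\otimes\lie{k}\to T^*\otimes\lie{b}\to T^*\otimes\lie{b}/\lie{k}\to 0\]
with vertical maps $\partial_K$, $\partial_B$ and the zero map to $\Lambda^2T^*\otimes T$, producing the six-term sequence
\[0\to\ker\partial_K\to\ker\partial_B\xrightarrow{\phi} T^*\otimes\lie{b}/\lie{k}\xrightarrow{\psi}\coker\partial_K\to\coker\partial_B\to 0.\]
The splitting $\Hom(W,V)=EH\oplus ES^3H$ identifies $\lie{b}/\lie{k}\cong ES^3H$ as an $\Sp(n)\h^*$-module, so the lemma reduces to showing (i) $\ker\phi = S^2H$ with the given explicit generator, and (ii) $\im\phi = W^*\otimes\lie{b}/\lie{k} + S^2E S^2H$, where the second summand is viewed inside $V^*\otimes\lie{b}/\lie{k}$. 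Indeed, (ii) forces $\coker\phi\cong V^*\otimes\lie{b}/\lie{k}/S^2E S^2H$, and feeding this into the snake-lemma sequence collapses it to the claimed four-term exact sequence.

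For (i), recall from Corollary~\ref{cor:kerpartialB} that $\ker\partial_B=S^2W^*\otimes V+(S^2E S^2H+S^2H)$, with the parenthesized summand sitting diagonally in $W^*\otimes(\lie{sp}(n)+\h)+V^*\otimes\Hom(W,V)$. Since $\lie{sp}(n),\h\subset\lie{k}$, the $W^*$-contribution is killed by $\phi$, so the diagonal summand feeds $V^*\otimes\lie{b}/\lie{k}$ only through its $V^*\otimes\Hom(W,V)$-component. The crucial computation is to exhibit a highest weight vector of $S^2H\subset\ker\partial_B$ whose $V^*\otimes\Hom(W,V)$-component already lies in $V^*\otimes EH$, so that the whole vector lies in $T^*\otimes\lie{k}$. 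Using the description~\eqref{eqn:explicitEH} of $EH$ and direct computation of $v\hook\omega_s$ for $v=v_jh_2$ and $v=v_{n+j}h_2$, the first line of~\eqref{eqn:kerpartialK} is precisely $v_jh_2\otimes(w^s\otimes v_{n+j}h_2\hook\omega_s)-v_{n+j}h_2\otimes(w^s\otimes v_jh_2\hook\omega_s)$, manifestly in $V^*\otimes EH$, while the second line belongs to $W^*\otimes\h\subset W^*\otimes\lie{k}$. Since $S^2H$ has multiplicity one in $\ker\partial_B$, this produces all of $\ker\phi$ and identifies its generator as claimed.

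For (ii), I would analyze the remaining two components of $\ker\partial_B$ separately. The $S^2E S^2H$ summand lies in $V^*\otimes\Hom(W,V)$, hence maps into $V^*\otimes\lie{b}/\lie{k}$; by Schur's lemma, combined with the observation that $V^*\otimes EH = EH\otimes EH$ has no $S^2E S^2H$ component, it maps isomorphically onto the unique $S^2E S^2H$ inside $V^*\otimes\lie{b}/\lie{k}$. The $S^2W^*\otimes V$ component lies in $W^*\otimes\Hom(W,V)$, and its image lies in $W^*\otimes\lie{b}/\lie{k}$; both spaces decompose as $ES^5H+ES^3H+EH$ with each irreducible of multiplicity one, so a $K$-equivariant morphism between them is either an isomorphism or kills an entire isotypic component. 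The main obstacle is to rule out the latter; this amounts to showing, on each of the three irreducibles, that the natural composition $S^2W^*\otimes V\hookrightarrow W^*\otimes\Hom(W,V)\twoheadrightarrow W^*\otimes\lie{b}/\lie{k}$ is nonzero, which I would verify by the same highest-weight-vector techniques as in Lemma~\ref{lemma:formulae} and Proposition~\ref{prop:basis}. With this in hand, $\im\phi=W^*\otimes\lie{b}/\lie{k}+S^2E S^2H$, completing the proof.
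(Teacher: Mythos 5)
Your overall architecture --- the snake lemma applied to $0\to T^*\otimes\lie{k}\to T^*\otimes\lie{b}\to T^*\otimes\lie{b}/\lie{k}\to 0$, followed by a component-by-component analysis of the projection $\phi\colon\ker\partial_B\to T^*\otimes\lie{b}/\lie{k}$ using Corollary~\ref{cor:kerpartialB} --- is essentially the paper's, and most steps are sound. But the step disposing of the $S^2ES^2H$ summand fails as written. You assert that $V^*\otimes EH\cong EH\otimes EH$ contains no copy of $S^2ES^2H$, so that Schur's lemma forces the diagonal $S^2ES^2H\subset\ker\partial_B$ to map isomorphically into $V^*\otimes\lie{b}/\lie{k}$. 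That multiplicity claim is false: $EH\otimes EH\supset S^2(EH)=S^2ES^2H+\Lambda^2_0E+\R$, so $V^*\otimes EH$ does contain an $S^2ES^2H$ --- the paper's own proof exhibits its highest weight vector $v_1h_2\otimes((w^2+iw^3)\otimes v_1h_1+iw^1\otimes v_1h_2)$. There are therefore two copies of $S^2ES^2H$ inside $W^*\otimes\lie{sp}(n)+V^*\otimes EH\subset T^*\otimes\lie{k}$, and one must genuinely compute that no linear combination of them lies in $\ker\partial$; representation theory alone cannot exclude that the $S^2ES^2H$ of $\ker\partial_B$ sits entirely inside $T^*\otimes\lie{k}$, which would change both the kernel in the first claim and the exact sequence in the second. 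This computation is exactly what the paper supplies at this point, and without it your claims (i) and (ii) are both unproved.

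Two smaller issues. First, you verify that the vector \eqref{eqn:kerpartialK} lies in $T^*\otimes\lie{k}$, but not that it lies in $\ker\partial$: Corollary~\ref{cor:kerpartialB} gives only the isotypic type of $\ker\partial_B$, not an explicit highest weight vector for its $S^2H$, so identifying the generator still requires parametrizing the $S^2H$-isotypic part of $T^*\otimes\lie{k}$ and finding the unique combination annihilated by $\partial$, as the paper does. Second, the lemma asserts that \eqref{eqn:cokerKB} is exact as a sequence of \emph{$K$-modules}, but the splitting $T^*=V^*\oplus W^*$ used to identify $\coker\phi$ with $(V^*\otimes\lie{b}/\lie{k})/S^2ES^2H$ is only $\Sp(n)\h^*$-invariant; you still need the observation that $EH\subset K$ acts trivially on $V^*\otimes\lie{b}/\lie{k}$ to conclude that the induced maps are $K$-equivariant.
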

\begin{proof}
If we set
\[T_1=W^*\otimes\Hom(W,V), \quad T_2=W^*\otimes \Hom(\lie{sp}(n)+\h)+V^*\otimes \Hom(W,V),\]
Corollary~\ref{cor:kerpartialB} implies $\ker\partial_B$ decomposes as the direct sum of $S^2W^*\otimes V\subset T_1$ and $S^2H(S^2E+\R)\subset T_2$.
Since $\partial(T_1)$ and $\partial(T_2)$ intersect trivially, $\ker\partial_K$ is also a direct sum, i.e.  
\[\ker\partial_K = \bigl(S^2W^*\otimes V\cap (T^*\otimes\lie{k})\bigr) + \bigl(S^2H(S^2E+\R)\cap  (T^*\otimes\lie{k})\bigr).\]
The first component is contained in  $W^*\otimes EH=E(H+S^3H)$, which contains 
\[(w_2+iw_3)\otimes (-iw_1\otimes v_1h_1-(w_2-iw_3)\otimes v_1h_2) +i w_1\otimes ((w^2+iw^3)\otimes v_1h_1 + i w^1\otimes v_1h_2),\]
\[(w_2+iw_3)\otimes(w_2+iw_3)\otimes v_1h_1 + i (w_2+iw_3)\otimes w^1\otimes v_1h_2.\]
Neither of these vectors is in the kernel of $\partial_K$, so $\ker\partial_K$ intersects $T_1$ trivially.

Now the components of type $S^2HS^2E$ inside $T_2\cap  (T^*\otimes\lie{k}) $ are identified by
\[(w^2+iw^3)\otimes (v_1h_2\wedge v_1h_1), \quad v_1h_2\otimes ((w^2+iw^3)\otimes v_1h_1 + i w^1\otimes v_1h_2),\]
so $\ker\partial_K$ contains no $S^2HS^2E$.
The $S^2H$ components inside $T_2\cap  (T^*\otimes\lie{k})$ contain 
\begin{gather*}
(w^2+iw^3)\otimes(\id_V+2\id_W), \quad w_1\otimes (\omega_2+i\omega_3)-(w^2+iw^3)\otimes \omega_1,\\
 v_jh_2\otimes ((w^2+iw^3)\otimes v_{n+j}h_1 + iw^1\otimes v_{n+j}h_2))
- v_{n+j}h_2\otimes ((w^2+iw^3)\otimes v_{j}h_1 + iw^1\otimes v_{j}h_2)). 
\end{gather*}
It is straightforward to verify that there is only one linear combination that goes to zero, up to multiple, namely \eqref{eqn:kerpartialK}.

The commutative diagram with exact rows 
\[\xymatrix{
0\ar[r]&T^*\otimes\lie{k}\ar[r]\ar[d]^{\partial_{K}} & T^*\otimes\lie{b}\ar[d]^{\partial_{B}}\ar[r]& T^*\otimes\frac{\lie{b}}{\lie{k}}\ar[r]\ar[d]&0\\
0\ar[r]& \Lambda^2T^*\otimes T\ar[r]&\Lambda^2T^*\otimes T\ar[r]& 0
 }\]
determines an exact sequence
\[0\to \ker\partial_K\to \ker\partial_B\to  T^*\otimes\frac{\lie{b}}{\lie{k}}\xrightarrow{f} \coker\partial_K\to\coker\partial_B\to 0.\]
By the exactness of the top row in the diagram, $S^2W^*\otimes V$ maps injectively into  $T^*\otimes\frac{\lie{b}}{\lie{k}}$; the image equals $W^*\otimes\frac{\lie{b}}{\lie{k}}$ by a dimension count. This gives exactness of \eqref{eqn:cokerKB} as a sequence of vector spaces. Moreover, we have a diagram
\[\xymatrix{V^*\otimes\frac{\lie{b}}{\lie{k}}\ar[r]^\iota\ar[dr]^{\id} & T^*\otimes\frac{\lie{b}}{\lie{k}}\ar[d]\ar[r]^f & \coker\partial_K \\ & V^*\otimes\frac{\lie{b}}{\lie{k}}\ar[ur]^{\tilde f}}\]
where $\tilde f$ is $K$-equivariant. Therefore, $f\circ\iota$ is also equivariant.

Since  $EH$ acts trivially on $V^*\otimes\frac{\lie{k}}{\lie{b}}$, all maps in \eqref{eqn:cokerKB} are equivariant.
\end{proof}

Working with   $\Sp(n)\Sp(1)$-modules, we can define a complement of $S^2ES^2H$ in $V^*\otimes ES^3H\subset V^*\otimes\lie{b}$, namely 
\[W_3=\begin{cases}S^4H(S^2E+\Lambda^2_0E+\R)+S^2H\Lambda^2_0E+S^2H, & n>1 \\  S^4H(S^2E+\R)+S^2H, & n=1\end{cases},\]
and by the above lemma we have a decomposition into $\Sp(n)\Sp(1)$-modules
\[\Lambda^2T^*\otimes T = \im(\partial_K) \oplus\Lambda^2V^*\otimes W\oplus \partial_1(W_1)\oplus\partial_2(W_2)\oplus \partial(W_3).\]
Accordingly, the torsion of a connection splits into components as
\[\Theta=\Theta_*+\Theta^Q+\Theta_1+\Theta_2+\Theta_3.\]
This decomposition is not a decomposition of $K$-modules, i.e. it depends on the pointwise choice of a complement of $\mathcal{D}$. But restricting to the qc case, we find the following:
\begin{lemma}
\label{lemma:Kintrinsictorsion}
The intrinsic torsion of a qc $K$-structure takes values in
\begin{equation}
\label{eqn:theta0k}
\begin{cases}
\Span{\Theta_0^K}\oplus \partial(W_3), & n>1 \\ 
 \Span{\Theta_0^K}\oplus \partial(W_3)\oplus ES^5H, & n=1 \\ 
\end{cases}
\end{equation}
which is a direct sum of $K$-submodules of $\coker\partial_K$.
\end{lemma}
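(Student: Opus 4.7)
The plan is to exploit the direct-sum decomposition of $\coker\partial_K$ into $\Sp(n)\Sp(1)$-submodules coming from Lemma~\ref{lemma:partialK} and the display immediately preceding this lemma. Comparing that decomposition with the analogous one for $\coker\partial_B$ yields
\[\coker\partial_K=\coker\partial_B\oplus\partial(W_3)\]
as $\Sp(n)\Sp(1)$-modules, with $\coker\partial_B=\Lambda^2V^*\otimes W\oplus\partial_1(W_1)\oplus\partial_2(W_2)$, and the natural projection $\coker\partial_K\to\coker\partial_B$ simply kills the last summand. Since $\Theta_0\in\Lambda^2V^*\otimes W$, the class $\Theta_0^K$ sits in this embedded copy of $\coker\partial_B$ and matches $\Theta_0^B$; for $n=1$, the submodule $ES^5H\subset\partial_1(W_1)\oplus\partial_2(W_2)$ embeds into $\coker\partial_K$ in the obvious way.

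Given this setup, the inclusion of the intrinsic torsion is quick. By functoriality the $K$-intrinsic torsion $\Theta^K$ projects under $\coker\partial_K\to\coker\partial_B$ to the $B$-intrinsic torsion $\Theta^B$, and by Corollary~\ref{cor:Kreduction} $\Theta^B$ takes values in $\Span{\Theta_0^B}$ (plus $ES^5H$ when $n=1$). Lifting back through the above projection then forces $\Theta^K$ into $\Span{\Theta_0^K}\oplus\partial(W_3)$ (plus $ES^5H$ when $n=1$).

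It remains to verify that the listed summands are $K$-invariant and that the sum is direct. Directness is immediate from the $\Sp(n)\Sp(1)$-decomposition above: $\Span{\Theta_0^K}$ sits in $\Lambda^2V^*\otimes W$, $ES^5H$ sits in $\partial_1(W_1)\oplus\partial_2(W_2)$, and $\partial(W_3)$ is a separate summand. The submodule $\partial(W_3)$ is $K$-invariant as the image of the $K$-equivariant snake-lemma map $V^*\otimes(\lie{b}/\lie{k})\to\coker\partial_K$ from the proof of Lemma~\ref{lemma:partialK} applied to the $\Sp(n)\Sp(1)$-invariant complement $W_3$, using that $EH$ acts trivially on $\lie{b}/\lie{k}\cong ES^3H$ because $\Hom(W,V)$ is abelian. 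The submodule $ES^5H$ is $K$-invariant because $\Hom(W,V)$ acts trivially on $W_1$ and $W_2$, as noted in the proof of Proposition~\ref{prop:kernelr}.

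The delicate step, which I expect to be the main obstacle, is verifying that $\Span{\Theta_0^K}$ is $K$-invariant. The $\Sp(n)\h^*$-factor of $K$ fixes $\Theta_0$ by construction of $B$, so the real content is that the infinitesimal action of the $EH$ factor of $K$ on $\Theta_0$ lies not merely in $\im\partial_B$ but actually in the smaller $\im\partial_K$. To carry this out, I would take the vector $\tfrac12\tilde\alpha_1-\alpha_4+i\alpha_5$ computed in the proof of Lemma~\ref{lemma:linearized} as the image of the $EH$-highest weight vector acting on $\Theta_0$, and combine the identities in Lemma~\ref{lemma:formulae} to rewrite it as $\partial$ applied to an element of $V^*\otimes(\lie{sp}(n)+\h)\subset V^*\otimes\lie{k}$, thereby exhibiting it inside $\im\partial_K$.
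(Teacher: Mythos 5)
Your proposal is correct and its skeleton matches the paper's: reduce to showing that \eqref{eqn:theta0k} is a $K$-submodule, dispose of $\partial(W_3)$ and $ES^5H$ via the triviality of the $EH$-action on $\lie{b}/\lie{k}$ and on $W_1+W_2$, and concentrate on the $EH$-action on $\Theta_0^K$. Where you genuinely diverge is in that last step. The paper computes the full \emph{group} action of $w^s\otimes v\hook\omega_s\in EH$ on $\Theta_0$; since $\Hom(W,V)$ acts on $\Lambda^2T^*\otimes T$ polynomially rather than linearly in the group element, this produces quadratic terms such as $-w^s\wedge((v\hook\omega_s)\hook\omega_r)\otimes(v\hook\omega_r)\in\Lambda^{1,1}\otimes V$, which lie only in $\im\partial_K+\partial(W_3)$ a priori and are killed by a separate Schur argument comparing $S^2(EH)$ with $W_3$; that explicit identity is then reused in Proposition~\ref{prop:metricdeterminescomplement}. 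You work infinitesimally instead. This is legitimate, but you must say why it suffices: $\coker\partial_K$ is a linear representation of the connected group $K$, and $\im\partial_K=\partial(T^*\otimes\lie{k})$ is stable under $\lie{k}$, so $X\cdot\Theta_0\in\im\partial_K$ for all $X$ in the Lie algebra of $EH$ forces $X^m\cdot\Theta_0\in\im\partial_K$ for all $m\geq 1$ and hence $\exp(X)\Theta_0\equiv\Theta_0$ modulo $\im\partial_K$. Without that remark the reduction to the infinitesimal action appears to ignore precisely the quadratic terms the paper labours over. Your concrete plan for the remaining verification does work: by equivariance it suffices to treat the highest weight vector, whose image is $\tfrac12\tilde\alpha_1-\alpha_4+i\alpha_5$, and indeed
\[\tfrac12\tilde\alpha_1-\alpha_4+i\alpha_5=-\tfrac12\partial(\alpha_2)-\tfrac18\bigl(\tilde\alpha_2+\tilde\alpha_3+8\alpha_4\bigr)+\tfrac18\bigl(8i\alpha_5-\tilde\alpha_3+3\tilde\alpha_2\bigr),\]
a combination of the three elements that Lemma~\ref{lemma:formulae} exhibits in $\partial(V^*\otimes(\lie{sp}(n)+\h))\subset\im\partial_K$. (The paper reaches the same conclusion more abstractly: the vectors $X\cdot\Theta_0$ span an $EH$-isotypic subspace of $\im\partial_B$, and $\im\partial_B/\im\partial_K$ is a quotient of $V^*\otimes ES^3H$, which contains no $EH$.) In sum, your route is shorter and avoids the quadratic bookkeeping at the price of the connectedness/exponentiation remark, while the paper's route is longer but yields an explicit formula it needs again later.
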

\begin{proof}
Having defined qc $K$-structures by the condition of Corollary~\ref{cor:Kreduction}, it suffices to show that \eqref{eqn:theta0k} is a $K$-module. In \eqref{eqn:cokerKB}, $EH$ acts trivially on $V^*\otimes\frac{\lie{k}}{\lie{b}}$, so $\partial(W_3)$ is a $K$-submodule. The same applies to $ES^5H\subset \Lambda^{1,1}\otimes W$. It remains to be seen how $EH$ acts on $\Theta_0^K$.
The group action of $\Hom(W,V)$ on $\Lambda V^*$ is given on simple elements by
\[(w^i\otimes e_j)\alpha = \alpha-w^i\wedge e_j\hook\alpha.\]
Recalling \eqref{eqn:explicitEH},
\begin{multline*}
(w^s\otimes v\hook\omega_s) \omega_r\otimes w_r
=\Theta_0 + ( w^s\otimes v\hook\omega_s) \cdot\Theta_0
-\norm{v}^2\eta\hook\sigma\\
-w^s\wedge ((v\hook\omega_s)\hook\omega_r)\otimes (v\hook\omega_r)
- \norm{v}^2w_r\hook w^{123}\otimes (v\hook\omega_r).
\end{multline*}
However $ ( w^s\otimes v\hook\omega_s) \cdot\Theta_0$ was seen in Lemma~\ref{lemma:linearized} to be in some $EH\subset\im\partial_B$, but it is really in $\im\partial_K$, since $V^*\otimes ES^3H$ contains no $EH$.
Similarly, $\eta\hook\sigma$ lies in a trivial submodule of $\Lambda^{0,2}\otimes W$, which is contained in $\im\partial_K$, and 
\[w_r\hook w^{123}\otimes (v\hook\omega_r)\in \Lambda^{0,2}\otimes V\subset\im\partial_K.\]
Thus,
\[
( w^s\otimes v\hook\omega_s) \omega_r\otimes w_r-\Theta_0=-w^s\wedge ((v\hook\omega_s)\hook\omega_r)\otimes (v\hook\omega_r) \mod \im\partial_K,\]
which is in the image of $S^2(EH)$ under the equivariant map $V\otimes V\to \Lambda^{1,1}\otimes V$
\[
u\otimes v\to -\bigl(w^1\wedge ((u\hook\omega_1)\hook\omega_r)+w^2\wedge ((u\hook\omega_2)\hook\omega_r)+w^3\wedge ((u\hook\omega_3)\hook\omega_r)	\bigr)\otimes (v\hook\omega_r).
 \]
The target space $\Lambda^{1,1}\otimes V$ is contained in $\partial(W_3)+\im\partial_K$; however, $\partial(W_3)$ has no irreducible component in common with  $S^2(EH)=S^2ES^2H+\Lambda^2_0E+\R$. We conclude that the action of $EH$ on $\Theta_0^K$ is trivial.
\end{proof}

A  computation with highest weight vectors yields:
\begin{lemma}
\label{lemma:q}
Consider the map
\[q\colon \Lambda^{2,0}\otimes \h \to \Lambda^{2,1}\otimes W, \quad q(\alpha\otimes p) =\alpha \wedge w^s\otimes \widetilde{\ad}(p)w_s;\]
then
\[\left\{v\in W_3\mid \partial(v)\hook\Theta_0\in\im q\right\}=0.\]
\end{lemma}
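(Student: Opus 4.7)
The plan is a highest-weight-vector calculation of the type used in Lemmas~\ref{lemma:formulae}, \ref{lemma:f0} and~\ref{lemma:twomaps}. Because both the map $v\mapsto\partial(v)\hook\Theta_0$ and the subspace $\im q$ are $\Sp(n)\Sp(1)$-invariant, the set appearing in the statement is an $\Sp(n)\Sp(1)$-submodule of $W_3$, so it suffices to test a single highest weight vector in each irreducible summand of
\[W_3=S^4H(S^2E+\Lambda^2_0E+\R)+\Lambda_0^2E\cdot S^2H+S^2H\]
(the $\Lambda_0^2E$ pieces disappearing when $n=1$).

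A preliminary observation simplifies the setup. For any $v\in V^*\otimes ES^3H\subset V^*\otimes\Hom(W,V)$, one has $\partial(v)\in\Lambda^{1,1}\otimes V$, and hence $\partial(v)\hook\Theta_0$ lies automatically in $\Lambda^{2,1}\otimes W$, via contraction of the $V$-valued part of $\partial(v)$ against $\omega_s\otimes w_s$. Moreover, $\widetilde{\ad}$ sends $1\in\R$ to twice the identity of $W$ and maps $\lie{sp}(1)$ isomorphically onto the traceless endomorphisms of $W$, so $q$ is injective and $\im q\cong\Lambda^{2,0}\otimes\h$ as an $\Sp(n)\Sp(1)$-module. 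Thus the condition $\partial(v)\hook\Theta_0\in\im q$ is a condition inside $\Lambda^{2,1}\otimes W$. For each irreducible summand of $W_3$ I choose an explicit highest weight vector using the weight conventions of Section~\ref{sec:representations}, apply $\partial(e^k\otimes(e^i\otimes e_j))=e^{ki}\otimes e_j$, contract with $\Theta_0=\omega_s\otimes w_s$, and compare the result to a basis of the corresponding isotypic component of $\im q$, obtained by evaluating $q$ on highest weight vectors of $\Lambda^{2,0}\otimes\h$.

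The main technical obstacle is that every irreducible appearing in $W_3$ also appears in $\Lambda^{2,0}\otimes\h$ and hence in $\im q$, and moreover with multiplicity at least two inside $\Lambda^{2,1}\otimes W$, so Schur's lemma alone does not preclude overlap. For example, the $S^2H$ isotypic component receives contributions from $W_3$, from $\Lambda^{2,0}\otimes\R$ via $q$, and from $\Lambda^{2,0}\otimes\lie{sp}(1)$ via $q$, and one must verify by a direct numerical check on the chosen highest weight vectors that the vector produced from $W_3$ lies outside the span of the latter two. The case $n=1$ is handled analogously, with $\Lambda_0^2E$-summands simply omitted.
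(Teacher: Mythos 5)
Your proposal follows the same route as the paper, which simply asserts that the lemma results from ``a computation with highest weight vectors'': since the set in question is an $\Sp(n)\Sp(1)$-submodule of $W_3$, one tests a highest weight vector of each irreducible summand and compares its image under $v\mapsto\partial(v)\hook\Theta_0$ with the corresponding isotypic component of $\im q\cong\Lambda^{2,0}\otimes\h$. One small correction: contrary to your claim that every irreducible of $W_3$ recurs in $\im q$, the summand $S^2ES^4H$ does not occur in $\Lambda^{2,0}\otimes\h=(S^2E+S^2H+\Lambda^2_0ES^2H)\otimes(\R+S^2H)$ at all, so for that component Schur's lemma reduces the task to checking that $\partial(\cdot)\hook\Theta_0$ is nonzero there, while the remaining components (all of multiplicity at least two in $\Lambda^{2,1}\otimes W$) do require the explicit comparison you describe.
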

We can now prove the main result of this section.
\begin{theorem}
\label{thm:Kintrinsictorsion}
The intrinsic torsion of an integrable qc $K$-structure is constant, i.e. 
\[\Theta^K\equiv\Theta^K_0.\]
\end{theorem}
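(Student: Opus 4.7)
By Lemma~\ref{lemma:Kintrinsictorsion} together with the integrability hypothesis (which eliminates the $ES^5H$ summand when $n=1$), the $K$-intrinsic torsion takes values in $\Span{\Theta_0^K}\oplus\partial(W_3)$. The component along $\Theta_0^K$ is constant by the qc condition, so it suffices to prove that the $\partial(W_3)$-component vanishes identically.

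My plan is to extract this component from the first Bianchi identity. Pass to any $\Sp(n)\h^*$-reduction of $P$, on which the constant tensor $\Theta_0=\omega_s\otimes e_{4n+s}$ is $\Sp(n)\h^*$-invariant and therefore parallel. After modifying the connection by a section of $T^*\otimes\lie{k}$, which changes its torsion only inside $\im\partial_K$, write $\Theta=\Theta_0+\partial(v)$ for some $v\colon P\to W_3$; this is possible because Lemma~\ref{lemma:partialK} embeds $W_3$ as a direct complement of $\im\partial_K$ inside $\im\partial_K+\partial(W_3)$. Now apply Bianchi $D\Theta=\Omega\hook\theta$ and project onto $\Lambda^{2,1}\otimes W$. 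On the right-hand side, observe that $\lie{k}=\lie{sp}(n)\oplus\lie{h}\oplus EH$ acts on the quotient $W=T/V$ only through the $\lie{h}$-summand: $\lie{sp}(n)$ preserves the splitting $T=V\oplus W$ and acts trivially on $W$, while $EH\subset\Hom(W,V)$ maps $W$ into $V$ and hence acts trivially on $W=T/V$. Consequently the $\Lambda^{2,1}\otimes W$-part of $\Omega\hook\theta$ equals $q(\Omega^{2,0}_\lie{h})$ in the notation of Lemma~\ref{lemma:q}, and in particular lies in $\im q$.

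On the left-hand side, use \eqref{eqn:Dalphanablaalpha}: $D\Theta=\langle\nabla\Theta_\theta,\tfrac12\theta\wedge\theta\rangle+\Theta\hook\Theta$. The covariant derivative term is $V$-valued, since $\nabla\Theta_0=0$ by parallelism and $\nabla\partial(v)$ is $V$-valued because $\partial(v)\in\Lambda^{1,1}\otimes V$, so it contributes nothing to the $W$-valued projection. In the expansion of $\Theta\hook\Theta$, the summands $\Theta_0\hook\Theta_0$, $\Theta_0\hook\partial(v)$ and $\partial(v)\hook\partial(v)$ are, respectively, zero (since $e_{4n+s}\hook\omega_r=0$), in $\Lambda^{3,0}\otimes V$, and in $\Lambda^{1,2}\otimes V$; hence the only survivor in bigrade $(2,1)\otimes W$ is $\partial(v)\hook\Theta_0$. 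Equating the two sides yields $\partial(v)\hook\Theta_0\in\im q$, and Lemma~\ref{lemma:q} forces $v=0$, whence $\Theta_3=\partial(v)=0$. The main obstacle is the careful bookkeeping of bigradings and sign conventions required to isolate the $\Lambda^{2,1}\otimes W$ part of $D\Theta$ and match it with the form appearing in Lemma~\ref{lemma:q}; once this routine but lengthy tracking is settled, that lemma delivers the conclusion at once.
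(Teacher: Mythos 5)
Your argument is correct and is essentially the proof in the paper: the paper likewise reduces to a connection with torsion $\Theta_0+\partial(v)$, $v\in W_3$, extracts the $\Lambda^{2,1}\otimes W$ component of a Bianchi identity to obtain $\partial(v)\hook\Theta_0\in\im q$, and concludes by Lemma~\ref{lemma:q}. Your use of the first Bianchi identity $D\Theta=\Omega\wedge\theta$ projected onto $T/V$ is the same identity the paper writes as $\Omega\wedge\eta=D^2\eta=D\Theta_0=\Theta\hook\Theta_0$, since $D\eta=\Theta\hook\eta$ is exactly the $T/V$-valued part of the torsion.
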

\begin{proof}
Choose an arbitrary reduction to $\Sp(n)\Sp(1)$ and a connection such that the torsion takes values in $\partial(W_3)+\Theta_0$. In particular, $\Theta_{-1}$ and $\Theta_2$ vanish, as does the component $\Theta^{0,2}$. Then the Bianchi identity and Proposition~\ref{prop:Bintrinsic} give
\[\Omega\wedge\eta=D^2\eta=D\Theta_0=\Theta\hook\Theta_0.\]
In particular,
\[\Omega^{2,0}\wedge\eta=\Theta^{1,1}\hook\Theta_0.\]
By Lemma~\ref{lemma:q}, this implies that $\Theta^{1,1}$ is zero.
\end{proof}

Since the intrinsic torsion is trivial, we are motivated to consider specific connections. As  $\Theta_0$ is not invariant under $K$, we cannot conclude that there exists a connection with torsion equal to $\Theta_0$: we must consider the stabilizer of $\Theta_0$ in $\Lambda^2T^*\otimes T$ and take a corresponding reduction.

Given a $K$-structure $P$ and a reduction $\tilde P$ to $\Sp(n)\h^*$, we define a \dfn{$K$-connection} on $\tilde P$ as a one-form $\omega\in\Omega^1(\tilde P,\lie{k})$ which is the restriction of a (uniquely determined) connection form on $P$.

\begin{corollary}
\label{cor:canonicalconnection}
Any reduction to $\Sp(n)\h^*$ of an integrable qc $K$-structure  admits a $K$-connection with torsion equal to $\Theta_0$.
\end{corollary}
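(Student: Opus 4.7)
The plan is to start with any $K$-connection on $\tilde P$ and correct its torsion to exactly $\Theta_0$ by subtracting an appropriate tensorial $\lie{k}$-valued $1$-form. By Theorem~\ref{thm:Kintrinsictorsion}, the $K$-intrinsic torsion is constantly equal to $\Theta_0^K$, so for any starting $K$-connection the torsion function $\Theta_\theta\colon\tilde P\to\Lambda^2T^*\otimes T$ satisfies $\Theta_\theta(u)-\Theta_0\in\im\partial_K$ for every $u\in\tilde P$. The task therefore reduces to producing an $\Sp(n)\h^*$-equivariant map $\alpha\colon\tilde P\to T^*\otimes\lie{k}$ such that $\partial_K\circ\alpha=\Theta_\theta-\Theta_0$ pointwise, because then $\omega$ minus the corresponding tensorial $1$-form (paired with the tautological form) will be a connection with torsion $\Theta_0$.

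First I would fix an arbitrary $K$-connection $\omega$ on $\tilde P$, obtained by restricting any connection on $P$, and compute the equivariant map $\Theta_\theta-\Theta_0$. As the introduction warns, $\partial_K$ admits no $K$-equivariant right inverse since $K$ is not reductive; however, the subgroup $\Sp(n)\h^*$ \emph{is} reductive, so I can choose an $\Sp(n)\h^*$-invariant complement $C$ of $\ker\partial_K$ inside $T^*\otimes\lie{k}$, for which $\partial_K|_C\colon C\to\im\partial_K$ is an $\Sp(n)\h^*$-equivariant isomorphism. Setting
\[\alpha:=(\partial_K|_C)^{-1}\circ(\Theta_\theta-\Theta_0)\]
gives the desired lift, and interpreting $\alpha$ via the tautological form produces a tensorial $\lie{k}$-valued $1$-form on $\tilde P$ whose subtraction from $\omega$ yields a candidate $\omega':=\omega-\alpha$ with torsion $\Theta_0$.

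The one thing remaining to check is that $\omega'$ actually arises by restriction from a connection on $P$, which is the definition of a $K$-connection. Since $\alpha$ is tensorial and $\Sp(n)\h^*$-equivariant, and since $K=\Sp(n)\h^*\ltimes EH$ has $EH$ as a complementary normal subgroup, $\alpha$ extends uniquely by $K$-equivariance from $\tilde P$ to a tensorial $\lie{k}$-valued $1$-form $\tilde\alpha$ on $P$; subtracting $\tilde\alpha$ from any connection on $P$ that restricts to $\omega$ yields a connection on $P$ whose restriction to $\tilde P$ is $\omega'$. The main obstacle is precisely the non-reductivity of $K$, which prevents any canonical choice of splitting; the construction therefore produces a $K$-connection with torsion $\Theta_0$ but does not single one out, consistently with the remark in the introduction that an additional curvature condition is needed to obtain the qc connection.
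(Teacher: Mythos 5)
Your proposal is correct and follows essentially the same route as the paper: fix an arbitrary connection, use Theorem~\ref{thm:Kintrinsictorsion} to see that $\Theta_\theta-\Theta_0$ is an $\Sp(n)\h^*$-equivariant map into $\im\partial_K$, and invert $\partial_K$ via an $\Sp(n)\h^*$-invariant complement of its kernel (the paper phrases this as the existence of an $\Sp(n)\h^*$-invariant right inverse). Your closing check that the corrected form extends to a connection on $P$ is a small point the paper leaves implicit, but it is handled correctly.
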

\begin{proof}
Fix an arbitrary connection $\omega$, and let $\Theta$ be its torsion. By Theorem~\ref{thm:Kintrinsictorsion}, $\Theta_\theta-\Theta_0$  defines an $\Sp(n)\h^*$-equivariant
map taking values  in $\im \partial_K$. Since $\partial_K\colon T^*\otimes\lie{k}\to\im\partial_K$ admits an $\Sp(n)\h^*$-invariant right inverse, we obtain  a tensorial $\lie{k}$-valued $1$-form  $A$ such that the connection $\omega+A$ has torsion  $\Theta_0$.
 \end{proof}
This result only applies to integrable qc structures. For this reason, in the rest of the paper we shall only consider integrable qc structures.

\begin{remark}
Given a reduction  to $\Sp(n)\h^*$, all other reductions are parametrized by forms of type $(1,0)$, since $K/\Sp(n)\h^*\cong V^*$ as an $\Sp(n)\h^*$-module. Such a reduction is determined canonically once one fixes a compatible metric on $\mathcal{D}$ (see \cite{Biquard}, or Proposition~\ref{prop:metricdeterminescomplement}); replacing the metric $g$ with a different metric $fg$ in the conformal class affects the reduction  to $\Sp(n)\h^*$ via  $df^{1,0}$.
\end{remark}

\section{Connections and curvature}
\label{sec:curvature}
We saw in Corollary~\ref{cor:canonicalconnection} that given an integrable qc $K$-structure,  any reduction to $\Sp(n)\h^*$ has a $K$-connection with torsion equal to $\Theta_0$, though this is not unique due to the fact that $\partial_K$ has a kernel. In this section we find a natural condition that can be imposed on the curvature of such a connection that makes it unique, given the choice of the reduction. As a byproduct, we
compute the $K$-module in which the curvature lies. Notice that this is not a metric connection, unlike the Biquard connection.

The curvature of a connection with torsion $\Theta_0$ takes values in
$\Lambda^2T^*\otimes \lie{k}$;
this is a space of dimension  $16 n^4+O(n^3)$
which, as an $\Sp(n)\Sp(1)$-module, decomposes as 
\[\begin{split}
S^4E
+ V_{31}(\R+S^2H)
+V_{22}
+V_{211}S^2H
+ S^3E(2H+S^3H) 
+V_{21}(3H+2S^3H)\\ 
+\Lambda^3_0E(H+S^3H)
+ S^2 E(3\R+6S^2H+S^4H)
+\Lambda^2_0E(3\R+5S^2H+2S^4H)\\
+E(8H+7S^3H+S^5H)
+ 4\R+6S^2H +3S^4H
\end{split}\]
However,  the Bianchi identity implies that the curvature lies in a module of dimension $\frac23n^4+O(n^3)$, identified by the following:
\begin{lemma}
\label{lemma:Kcurvature}
Let \[\delta\colon \Lambda^2T^*\otimes \lie{k}\to\Lambda^3\otimes T\]
be the restriction of the skew-symmetrization map $\Lambda^2T^*\otimes \gl(T)\to\Lambda^3\otimes T$, and set
\begin{equation*}
s\colon T^*\otimes (\Lambda^{2}T^*\otimes T)\to \Lambda^3T^*\otimes T, \quad x\otimes y\otimes z \to x\wedge y\otimes z.
\end{equation*}
If we denote by $\widetilde{EH}$ the submodule  of $\Lambda^{2}T^*\otimes T$ containing
\begin{equation}
\label{eqn:generatesEH}
 \frac12\tilde\alpha_1-\alpha_4+i\alpha_5\in\Lambda^{2,0}\otimes V+\Lambda^{1,1}\otimes W,
\end{equation}
the preimage of  $s(T^*\otimes \widetilde{EH})$ under $\delta$ is the direct sum of
\[\begin{alignedat}{2}
R_1&\cong S^4E+ (S^2E+\Lambda^2_0E+\R)(S^2H+\R)&\subset \Lambda^{2,0}\otimes(\lie{sp}(n)+\h)\\
R_2&\cong S^3EH+2ES^3H+2EH &\subset \Lambda^{1,1}\otimes(\lie{sp}(n)+\h)+\Lambda^{2,0}\otimes EH\\
R_3&\cong S^2ES^2H +S^4H+S^2H+\R&\subset \Lambda^{0,2}\otimes(\lie{sp}(n)+\h)+\Lambda^{1,1}\otimes EH\\
R_4&\cong ES^3H&\subset \Lambda^{0,2}\otimes EH.
\end{alignedat}\]
Moreover, denoting by $\tr\colon\Lambda^{2}T^*\otimes \lie{k}\to\Lambda^2T^*$ the trace,
\[\begin{alignedat}{1}
\tilde R_1=R_1\cap \ker \tr&\cong S^4E+ S^2ES^2H+\Lambda^2_0E+\R,\\
\tilde R_2=R_2\cap \ker \tr&\cong S^3EH+ES^3H+EH,\\
\tilde R_3=R_3\cap \ker \tr&\cong  S^2ES^2H +S^4H+\R.\\
\end{alignedat}\]
\end{lemma}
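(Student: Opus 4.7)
The plan is to exploit the bi-grading $T^*=V^*\oplus W^*$ together with the decomposition $\lie{k}=\lie{sp}(n)\oplus(\lie{sp}(1)\oplus\R)\oplus EH$. First I would chart how $\delta$ interacts with the bi-grading: because $\lie{sp}(n)$ acts only on $V$, $\lie{sp}(1)\oplus\R$ acts on both summands of $T$ (via $R_{-p}$ on $V$ and $\widetilde{\ad}$ on $W$), and $EH\subset W^*\otimes V$ takes $W$ into $V$, the skew-symmetrization sends $\Lambda^{p,q}\otimes\lie{sp}(n)$ into $\Lambda^{p+1,q}\otimes V$, sends $\Lambda^{p,q}\otimes(\lie{sp}(1)\oplus\R)$ into $\Lambda^{p+1,q}\otimes V+\Lambda^{p,q+1}\otimes W$, and sends $\Lambda^{p,q}\otimes EH$ into $\Lambda^{p,q+1}\otimes V$. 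A parallel check shows $s(T^*\otimes\widetilde{EH})\subset\Lambda^{3,0}\otimes V+\Lambda^{2,1}\otimes(V+W)+\Lambda^{1,2}\otimes W$, missing $\Lambda^{0,3}\otimes T$ entirely; this already forces $\delta$ restricted to $\Lambda^{0,2}\otimes EH$ to land in a proper submodule, cutting that summand down to the $ES^3H$-piece that becomes $R_4$.

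Next I would decompose $s(T^*\otimes\widetilde{EH})$ as an $\Sp(n)\Sp(1)$-module. Since $\widetilde{EH}\cong EH$, we have
\[T^*\otimes\widetilde{EH}=(EH+S^2H)\otimes EH=S^2ES^2H+\Lambda^2_0ES^2H+S^2E+\Lambda^2_0E+S^2H+\R+ES^3H+EH,\]
and $s$ annihilates the symmetric part in the first two tensor slots. I would then pin down highest weight vectors for each surviving irreducible in each bi-graded target, following the pattern of Section~\ref{sec:representations} and the $\alpha_i$, $\beta_i$ of Section~\ref{sec:integrability}.

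For each bi-graded component $\Lambda^{p,q}\otimes\lie{k}$ with $p+q=2$ I would decompose into $\Sp(n)\Sp(1)$-irreducibles, apply $\delta$ to a highest weight vector of each summand, and test whether the image lies in $s(T^*\otimes\widetilde{EH})$. The summands that do contribute to the appropriate $R_i$, where $i$ is controlled essentially by the number of $W^*$-factors; those that do not are forced to vanish. Dimension counts via the Weyl character formula would confirm the stated isomorphism types: $R_1$ from $\Lambda^{2,0}\otimes(\lie{sp}(n)+\lie{h})$, $R_2$ from $\Lambda^{1,1}\otimes(\lie{sp}(n)+\lie{h})+\Lambda^{2,0}\otimes EH$, $R_3$ from $\Lambda^{0,2}\otimes(\lie{sp}(n)+\lie{h})+\Lambda^{1,1}\otimes EH$, and $R_4\cong ES^3H$ from $\Lambda^{0,2}\otimes EH$.

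For the traceless version, $\tr\colon\lie{k}\to\R$ vanishes on $\lie{sp}(n)$, $\lie{sp}(1)$, and $EH\subset W^*\otimes V$, being nonzero only on the $\R$-factor of $\lie{k}$ (where scalars act as $-\id_V-2\id_W$ with trace $-(4n+6)$). So $\tr\colon\Lambda^2T^*\otimes\lie{k}\to\Lambda^2T^*$ factors through the $\R$-projection of $\lie{k}$, and $\tilde R_i$ is the intersection of $R_i$ with its kernel; $R_4$ has no $\R$-factor and is unchanged. The principal obstacle is the bookkeeping: several irreducibles (notably $EH$, $ES^3H$, $S^2H$) appear with multiplicity in $\Lambda^2T^*\otimes\lie{k}$, so matching them with their images under $\delta$ requires explicit highest-weight computations in the spirit of Proposition~\ref{prop:basis} and Lemma~\ref{lemma:formulae}; extra care is needed in dimension $7$ (i.e.\ $n=1$), where $\Lambda^3_0E$, $V_{21}$ and related modules degenerate.
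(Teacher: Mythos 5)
Your bigraded bookkeeping of where $\delta$ and $s$ land is correct, and the overall plan (split $\delta$ into the four restrictions to the components $\Lambda^{p,q}\otimes\lie{k}$, then compare images with $s(T^*\otimes \widetilde{EH})$) is essentially the paper's. However, there is a genuine error at the step where you identify $s(T^*\otimes\widetilde{EH})$: the claim that ``$s$ annihilates the symmetric part in the first two tensor slots'' is false. The map $s$ wedges a one-form $x\in T^*$ with the $\Lambda^2T^*$-component of an element of $\widetilde{EH}$, and those components are honest two-forms such as $\omega_s$ and $\xi\wedge w^s$ (see the explicit generator $\frac12\tilde\alpha_1-\alpha_4+i\alpha_5$); they are not decomposables in the abstract parameter $V^*\cong EH$, so the symmetric combinations do not degenerate under the wedge. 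In fact $s$ is injective on all of $T^*\otimes\widetilde{EH}$ --- the paper states and uses exactly this, and the proof of Corollary~\ref{cor:Omegabiq} exhibits $s(v_1h_2\otimes v_1h_2)\neq 0$, a vector in the image of the symmetric part. Dropping $S^2(EH)=S^2ES^2H+\Lambda^2_0E+\R$ from $s(V^*\otimes\widetilde{EH})$ would lead you to $R_1\cong S^4E+S^2E+\Lambda^2_0ES^2H+S^2H$ instead of $S^4E+(S^2E+\Lambda^2_0E+\R)(S^2H+\R)$, so your computation cannot reach the stated answer; it would also destroy the later identification of the Ricci tensor with $\chi_V\in S^2(EH)$.

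Two smaller points. First, ``apply $\delta$ to a highest weight vector of each summand and test whether the image lies in $s(T^*\otimes\widetilde{EH})$'' is not a valid way to compute a preimage when an isotype occurs with multiplicity $m>1$: one must analyse the full $m$-dimensional space of equivariant embeddings, and this is precisely where the substance of the paper's proof lies (the copies of $S^3EH$, $ES^3H$ and $EH$ in $\ker\delta_2$, isolated via \eqref{eqn:ES3HinR2} and \eqref{eqn:EHinR2}). You acknowledge the multiplicity issue at the end, but the phrase ``those that do not are forced to vanish'' is the wrong logic for a preimage. Second, to organise the preimage as $\ker\delta$ plus a complement mapping isomorphically onto $s(T^*\otimes\widetilde{EH})$ you also need $\im s\subset\im\delta$; the paper obtains this from $\widetilde{EH}\subset\im\partial_K$ (Lemma~\ref{lemma:formulae}), a step absent from your outline. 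Your treatment of the trace (it factors through the $\R$-summand of $\lie{k}$) is correct as far as it goes, but establishing that $\tr|_{R_i}$ is onto $\Lambda^{p,q}$ still requires the explicit surjectivity arguments of the paper.
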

\begin{proof}
To begin with, we determine the  kernel of $\delta$, which we decompose as the direct sum of the kernels of the following maps, obtained by restriction:
\[\begin{split}
 \delta_1\colon\Lambda^{2,0}\otimes(\lie{sp}(n)+\h)&\to \Lambda^{3,0}\otimes V+\Lambda^{2,1}\otimes W\\
\delta_2\colon\Lambda^{1,1}\otimes(\lie{sp}(n)+\h)+\Lambda^{2,0}\otimes EH&\to \Lambda^{2,1}\otimes V+\Lambda^{1,2}\otimes W\\
\delta_3\colon\Lambda^{0,2}\otimes(\lie{sp}(n)+\h)+\Lambda^{1,1}\otimes EH&\to \Lambda^{1,2}\otimes V+\Lambda^{0,3}\otimes W\\
\delta_4\colon\Lambda^{0,2}\otimes EH&\to \Lambda^{0,3}\otimes V
   \end{split}\]
The kernel of $\delta_1$ is contained in $\Lambda^{2,0}\otimes(\lie{sp}(n))$; by construction, it coincides with the space of curvature tensors of metrics with holonomy $\Sp(n)$, i.e. the kernel of
\[S^2(\lie{sp}(n))\to \Lambda^4T^*, \quad a\odot b\to a\wedge b,\]
which is known to equal $S^4E$ (see \cite{RozanskyWitten}).

The restriction of $\delta_3$ to $\Lambda^{1,1}\otimes EH+\Lambda^{0,2}\otimes\R$ is an isomorphism: in fact, $\delta_3(\Lambda^{11}\otimes EH)$ coincides with $\Lambda^{1,0}\otimes \partial(W^*\otimes EH)$, and we know from Lemma~\ref{lemma:partialK} that $\partial_K$ is injective. Thus, $\ker \delta_3\cong S^2ES^2H +\R+S^2H+S^4H$.

The kernel of $\delta_4$ is $ES^3H$, because it is clearly surjective.

Considering the composition of $\delta_2$ with the projection on $\Lambda^{1,2}\otimes W$, we see that 
$\delta_2$ has the same kernel as its restriction to 
\[\Lambda^{1,1}\otimes\lie{sp}(n)+ \Lambda^{1,0}\wedge S^2H + \Lambda^{2,0}\otimes EH,\]
where $S^2H$ contains 
\[w^1\otimes (\omega_2+i\omega_3)-(w^2+iw^3)\otimes \omega_1  + i(w^2+iw^3)\otimes(\id_V+2\id_W).\]

Moreover the restrictions of $\delta_2$ to $\Lambda^{1,1}\otimes\lie{sp}(n)+ \Lambda^{1,0}\wedge S^2H$ and $ \Lambda^{2,0}\otimes EH$ are injective. Thus, we must investigate the common components in 
\[\begin{cases}S^3E(S^3H+H)+V_{21}(S^3H+H)+2E(S^3H+H), &n>1 \\  S^3E(S^3H+H)+2E(S^3H+H), &n=1\end{cases}\]
and
 \[\begin{cases}S^3EH+V_{21}(2H+S^3H)+\Lambda^3_0E(H+S^3H)+E(3H+2S^3H), &n>1 \\S^3EH+E(2H+S^3H),&n=1\end{cases} \]
Decomposing the target space $\Lambda^{2,1}\otimes V$ as 
\[\begin{cases} \begin{aligned} S^3E(S^3H&+H)+  V_{21}(3H+3S^3H+S^5H)\\ &+\Lambda^3_0E(2H+2S^3H+S^5H)+E(5S^3H+5H+2S^5H )\end{aligned}& n>1\\ S^3E(S^3H+H)+E(3S^3H+3H+S^5H )& n=1 \end{cases}\] 
we see that the kernel contains exactly one copy of $S^3EH$.

Considering the components isomorphic to $ES^3H$ in $\Lambda^{1,1}\otimes\lie{sp}(n)$ and $\Lambda^{1,0}\wedge S^2H$, one verifies that only the second among
$(w^2+iw^3)\wedge\alpha_2$ and
\begin{equation}
\label{eqn:ES3HinR2}
v_1h_2\wedge w^1\otimes (\omega_2+i\omega_3)-v_1h_2\wedge(w^2+iw^3)\otimes \omega_1  + iv_1h_2\wedge (w^2+iw^3)\otimes(\id_V+2\id_W)
 \end{equation}
has image in $\delta_2(\Lambda^{2,0}\otimes EH)$. Thus, the kernel contains exactly one copy of $ES^3H$. Similarly, only one of the two components isomorphic to $EH$  in $\Lambda^{1,1}\otimes\lie{sp}(n)+\Lambda^{1,0}\wedge S^2H$ has image contained in $\delta_2(\Lambda^{2,0}\otimes EH)$; this component is contained in $\Lambda^{1,0}\wedge S^2H$ and is identified by the
 highest weight vector
\begin{equation}
\label{eqn:EHinR2}
\begin{alignedat}{2}
v_1h_1\wedge \bigl(w^1\otimes (&\omega_2+i\omega_3)+ (w^2+iw^3)\otimes (-\omega_1  + i(\id_V+2\id_W))\bigr)\\
+v_1h_2\wedge \bigl(&\frac12i (w_2+iw_3)\otimes (\omega_2-i\omega_3)-  w^1\otimes(\id_V+2\id_W)\\
-&\frac12i (w_2-iw_3)\otimes (\omega_2+i\omega_3)\bigr). 
\end{alignedat}
\end{equation}
A long yet straightforward computation shows that for the remaining modules, $\delta_2$ has the greatest rank that Schur's lemma allows, so 
\[\ker \delta_2=S^3EH+ES^3H+ EH.\]

It follows from Lemma~\ref{lemma:formulae} that $\widetilde {EH}$ is contained in $\im\partial_K$; this implies that the image of $s$ is contained in the image of $\delta$. Moreover, $s$ is injective, and the image of  $W^*\otimes \widetilde{EH}\cong EH+ES^3H$ is contained in $\Lambda^{2,1}\otimes V+\Lambda^{1,2}\otimes W$, giving 
\[R_2=\ker\delta_2 + EH+ES^3H.\]
Similarly, we see that
\[R_1=\ker\delta_1 + S^2E(S^2H+\R)+\Lambda^2_0E(S^2H+\R)+(S^2H+\R).\]
Finally, consider the commutative diagram
\[\xymatrix{ R_1\ar[rr]^{\tr}\ar[dr]^{\delta_1} && \Lambda^{2,0}\\ V\otimes\widetilde{EH}\ar[r]^s & \Lambda^{3,0}\otimes V+\Lambda^{2,1}\otimes W\ar[ur]^f &}\]
where $f$ is induced by the contraction $W\otimes W\to\R$. Since $f\circ s$ is surjective and $\im\delta_1$ contains $\im s$, it follows that $\tr=f\circ\delta_1$ is also surjective, proving that $R_1$ is isomorphic to $\tilde R_1+\Lambda^{2,0}$.

Similarly, the trace map
\[\tr\colon \Lambda^{1,1}\otimes(\lie{sp}(n)+\h)\to \Lambda^{1,1}\cong EH+ES^3H,\]
does not kill either  \eqref{eqn:ES3HinR2} or \eqref{eqn:EHinR2}, so the restriction to $R_2$ is surjective.

In the same way, the trace maps $R_3$ to $\Lambda^{0,2}\cong S^2H$, so $\tilde R_3$ differs from $R_3$ at most by an $S^2H$. A non-zero element
\[x=x_1+x_2, \quad  x_1\in \Lambda^{0,2}\otimes\lie{sp}(1), \quad x_2\in \Lambda^{1,1}\otimes EH,\]
can only be in the kernel of $\delta$ if $x_1$ is not zero; however, if $x_1$ is in the submodule isomorphic to $S^2H$, then
$\delta(x_1)^{0,3}$ is not zero. It follows that there is no $S^2H$ in $\tilde R_3$.
 \end{proof}

\begin{proposition}
\label{prop:Bianchi}
On an integrable qc manifold, the curvature of a connection with torsion $\Theta_0$ takes values in $R_1+R_2+R_3+R_4$.
\end{proposition}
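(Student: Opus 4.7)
The plan is to apply the first Bianchi identity to a connection with torsion $\Theta_\theta\equiv\Theta_0$ (which exists by Corollary~\ref{cor:canonicalconnection}) and translate it in terms of the associated equivariant maps. Bianchi reads $D\Theta=\Omega\wedge\theta$, which at the level of associated functions becomes
\[(D\Theta)_\theta=\delta(\Omega_\theta).\]
By Lemma~\ref{lemma:Kcurvature}, the preimage $\delta^{-1}(s(T^*\otimes\widetilde{EH}))$ is exactly $R_1\oplus R_2\oplus R_3\oplus R_4$, so it suffices to show that $(D\Theta)_\theta$ takes values in $s(T^*\otimes\widetilde{EH})$.

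Applying \eqref{eqn:Dalphanablaalpha} to $\alpha=\Theta$ gives
\[D\Theta=\bigl\langle\nabla\Theta,\tfrac12\theta\wedge\theta\bigr\rangle+\Theta\hook\Theta.\]
The second summand vanishes: in the decomposition $\Theta_0=\omega_s\otimes w_s$, the only nonzero scalar components are $T^{4n+s}=\omega_s$, and $w_s\hook\omega_r=0$ since $\omega_r\in\Lambda^2V^*$, so $(\Theta\hook\Theta)_\theta=\sum_s\omega_s\wedge(w_s\hook\Theta_0)=0$.

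For the first summand, even though $\Theta_\theta\equiv\Theta_0$ is a constant function, $\Theta_0$ is only $\Sp(n)\h^*$-invariant and not $K$-invariant, so $\nabla\Theta=D(\Theta_\theta)$ reduces to the infinitesimal $\lie{k}$-action of $\omega$ on $\Theta_0$. The subalgebra $\lie{sp}(n)\oplus\lie{sp}(1)\oplus\R\subset\lie{k}$ fixes $\Theta_0$, so only the $EH$-component of $\omega$ contributes. By the computation in the proof of Lemma~\ref{lemma:linearized}, the infinitesimal $EH$-action on $\Theta_0$ has image exactly the submodule $\widetilde{EH}$, generated by $\tfrac12\tilde\alpha_1-\alpha_4+i\alpha_5$. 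Hence $\nabla\Theta$ is $\widetilde{EH}$-valued, $\bigl\langle\nabla\Theta,\tfrac12\theta\wedge\theta\bigr\rangle$ lies in $s(T^*\otimes\widetilde{EH})$, and therefore $\delta(\Omega_\theta)\in s(T^*\otimes\widetilde{EH})$, completing the proof. The one delicate point is the identification of $EH\cdot\Theta_0$ with the specific submodule $\widetilde{EH}$, but this has already been carried out in Lemma~\ref{lemma:linearized}, so no new computation is required.
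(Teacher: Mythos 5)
Your proof is correct and follows essentially the same route as the paper: Bianchi identity plus formula \eqref{eqn:Dalphanablaalpha}, identification of $\nabla\Theta$ with the infinitesimal $EH$-action on $\Theta_0$ via Lemma~\ref{lemma:linearized}, and then Lemma~\ref{lemma:Kcurvature} to identify $\delta^{-1}(s(T^*\otimes\widetilde{EH}))$. The only difference is that you make explicit the vanishing of $\Theta_0\hook\Theta_0$, which the paper leaves implicit.
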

\begin{proof}
Writing $\Theta_0$ for $\langle \Theta_0, \frac12\theta\wedge\theta\rangle$, the Bianchi identity gives
\[D\Theta_0 = \Omega\wedge\theta.\]
By \eqref{eqn:Dalphanablaalpha},
\[D\Theta_0 = \langle \nabla\Theta_0,\frac16\theta\wedge\theta\wedge\theta\rangle + \Theta_0\hook\Theta_0.\]
The calculations of Lemma~\ref{lemma:linearized} show that the infinitesimal action of $\lie{k}$ on $\Theta_0$ gives an $EH$ containing 
\eqref{eqn:generatesEH}. 
 Thus $D\Theta_0$ lies in the image of $T^*\otimes  \widetilde{EH}$ under the map $s$ of Lemma~\ref{lemma:Kcurvature}.

It now follows from the Bianchi identity  that $\Omega_\theta$ is in the preimage of $s(T^*\otimes \widetilde{EH})$ under $\delta$;  Lemma~\ref{lemma:Kcurvature} concludes the proof.
\end{proof}

We can now ask  whether among the connections with  torsion $\Theta_0$ there is one with ``minimal'' curvature. Since the connection is well defined up to a section of a bundle with fibre $S^2H$, minimality should be taken to mean that the component in one of the two  $S^2H$ appearing in $R_1+R_2+R_3+R_4$ is zero. The key observation is that taking the interior product with $\Theta_0$ of a highest weight vector in $\ker\partial_K$ gives
\[-(\omega^2+i\omega^3)\otimes(\id_V+2\id_W) + i\omega^1\otimes (\omega_2+i\omega_3)-i(\omega^2+i\omega^3)\otimes \omega_1,\]
which is an element of $S^2H\subset R_1$. This is therefore the natural candidate as a component of curvature to kill.

In fact, it turns out that this component behaves like torsion, i.e. it depends on the choice of connection via a pointwise isomorphism $\ker\partial_K\cong S^2H$, although of course the full curvature tensor depends on the connection in a more complicated way. This enables us to prove:
\begin{theorem}
\label{thm:hegelianconnection}
Any $\Sp(n)\h^*$-reduction of an integrable qc $K$-structure has a unique $K$-connection such that
\begin{enumerate}
\item the  torsion is $\Theta_0$;
\item the curvature has no component in $S^2H\subset R_1$.
\end{enumerate}
Condition (2) can be replaced with
\begin{enumerate}
 \item[2'.] the curvature $\Omega$ satisfies $(\tr \Omega^{2,0})_\theta\in S^2E+\Lambda^2_0S^2E$.
\end{enumerate}
\end{theorem}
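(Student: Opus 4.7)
The plan is to exploit the explicit parametrization of connections with torsion $\Theta_0$ via $\ker\partial_K$ and track the $S^2H \subset R_1$ component of the curvature. By Corollary~\ref{cor:canonicalconnection} there exists such a connection $\omega_0$ on $\tilde P$; every other takes the form $\omega_0+a$, where $a$ is a tensorial $\lie{k}$-valued $1$-form with $a_\theta$ a section of the bundle associated to $\ker\partial_K \cong S^2H$ (Lemma~\ref{lemma:partialK}). The curvature changes as
\[
\Omega - \Omega_0 = \langle\nabla a,\theta\rangle + \Theta_0\hook a + \tfrac{1}{2}[a,a]
\]
via~\eqref{eqn:Dalphanablaalpha}, and the next step is to analyze the $\Lambda^{2,0}\otimes\lie{k}$ component to isolate the change in the $S^2H\subset R_1$ summand.

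The key observation, drawn from the explicit highest weight vector in Lemma~\ref{lemma:partialK}, is that the $V^*$-part of $a_\theta$ takes $\lie{k}$-values in $EH\subset\Hom(W,V)$, an abelian ideal of $\lie{k}$. This forces $[a,a]^{2,0}=0$ and confines $\langle\nabla a,\theta\rangle^{2,0}$ to $\Lambda^{2,0}\otimes EH$, which is disjoint from $R_1\subset\Lambda^{2,0}\otimes(\lie{sp}(n)+\h)$. The only surviving contribution is
\[
\phi(a_\theta) = (\Theta_0\hook a)_\theta = \sum_s\omega_s\otimes a_\theta(w_s),
\]
an $\Sp(n)\h^*$-equivariant map $\phi\colon\ker\partial_K\to S^2H\subset R_1$ which, evaluated on the highest weight vector of $\ker\partial_K$, reproduces the nonzero element of $S^2H\subset R_1$ displayed just before the theorem statement. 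Schur's lemma, applied between two irreducibles of type $S^2H$, upgrades $\phi$ to a pointwise isomorphism. The map $a\mapsto(\Omega)^{S^2H\subset R_1}$ is therefore pointwise affine with invertible linear part, so it has a unique zero, yielding both existence and uniqueness of the connection satisfying (1) and~(2).

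For the equivalence of (2) and (2'), the trace $\lie{k}\to\R$ vanishes on $\lie{sp}(n)$, $\lie{sp}(1)$ and $EH$ (their actions on $T$ being traceless), while the generator of $\R\subset\lie{k}$ acts as $-\id_V-2\id_W$ of trace $-(4n+6)$. Inspecting the $\R$-term of the explicit $S^2H\subset R_1$ representative shows that the induced map $\tr\colon\Lambda^{2,0}\otimes\lie{k}\to\Lambda^{2,0}$ sends $S^2H\subset R_1$ isomorphically onto $S^2H=\Span{\omega_1,\omega_2,\omega_3}\subset\Lambda^{2,0}$. The other summands of $R_1$ contain no $S^2H$-isotypic component, and the $\Lambda^{2,0}$-part of $R_2$ lies in $\Lambda^{2,0}\otimes EH$ and is killed by the trace; hence the $S^2H$-part of $(\tr\Omega^{2,0})_\theta$ vanishes exactly when (2) does.

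The most delicate step will be verifying that $\langle\nabla a,\theta\rangle^{2,0}$ and $[a,a]^{2,0}$ contribute nothing to $S^2H\subset R_1$; as sketched, this reduces to the confinement of the $V^*$-component of $\ker\partial_K$ to the abelian ideal $EH$, combined with module-theoretic bookkeeping of the $R_i$.
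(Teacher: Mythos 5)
Your proof is correct and follows essentially the same route as the paper: parametrize the connections with torsion $\Theta_0$ by sections valued in $\ker\partial_K\cong S^2H$, observe that the $\nabla A$ and $[A,A]$ terms cannot contribute to the relevant component because the $V^*$-part of $\ker\partial_K$ lies in the abelian ideal $EH$, and conclude via the isomorphism $t\mapsto\Theta_0\hook t$ onto $S^2H\subset R_1$. The only (cosmetic) difference is that the paper runs the whole argument through the trace, establishing the equivalence of (2) and (2') first and then killing $\tr(\Theta_0\hook A)$, whereas you track the $S^2H\subset R_1$ component directly and verify the equivalence with (2') at the end.
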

\begin{proof}
Let $\omega$ be any connection with torsion $\Theta_0$. By Proposition~\ref{prop:Bianchi}, the curvature lies in $R_1+R_2+R_3+R_4$. By Lemma~\ref{lemma:Kcurvature}, 
the component  of $R_1$ isomorphic to $S^2H$ is not contained in $\tilde R_1$, so the conditions (2) and (2') are equivalent.

The generic connection with torsion $\Theta_0$ has the form $\omega_A=\omega+A$, where $A_\theta$ takes values in $\ker\partial_K$.
The curvature of $\omega_A$ is 
\[\Omega_A=\Omega + DA+ \frac12[A,A],\]
where $\Omega$ denotes the curvature of $\omega$ and $D$ is the exterior covariant derivative with respect to $\omega$.

By Lemma~\ref{lemma:partialK},
 $A^{1,0}$ takes values in the abelian subalgebra $EH$, so
\[(\Omega_A)^{2,0}=\Omega^{2,0} + (DA)^{2,0},\]
and, by \eqref{eqn:Dalphanablaalpha}, \[DA=\frac12\langle \theta\wedge \theta, \nabla A\rangle +\Theta_0\hook A.\] 
However,  the infinitesimal action of $\lie{k}$ on $T^*\otimes\lie{k}$ takes $\ker\partial_K=S^2H$ into \[\ker\partial_K+ W^*\otimes \lie{k};\] 
it follows that $\langle \theta\wedge \theta, \nabla A\rangle^{2,0}$ is trace-free, and
\[\tr(\Omega_A)^{2,0}-\tr\Omega^{2,0}= \tr(\Theta_0\hook A).\]
By Proposition~\ref{prop:Bianchi}, the right-hand side lies in $S^2H\subset\Lambda^{2,0}$. Since the map
\[\ker\partial_K\ni t\to \tr(\Theta_0\hook t) \in S^2H\subset\Lambda^{2,0}\]
is easily seen to be an isomorphism, there is a unique $\omega_A$ such as in the statement.
\end{proof}
We shall refer to the connection of Theorem~\ref{thm:hegelianconnection} as the \emph{qc connection}.

\begin{example}
Let us consider the sphere, with the connection $\omega$ considered in Section \ref{sec:example}. Then the connection
\[\omega_{qc}=\omega+A, \quad A= -e^a \otimes w^s\otimes (e_a\hook \omega_s)\]
has torsion $\Theta_0$ and curvature
\[\Omega_{qc}=d\omega+\frac12[\omega,\omega] + DA+\frac12[A,A].\]
For $X,Y$ in $\lie{m}$, we find
\[\Omega_{qc}(X,Y)=\Omega(X,Y) + dA(X,Y)+\frac12[A,A](X,Y);\]
since $EH$ is abelian $[A,A]=0$, and 
we compute
\begin{multline*}
 \Omega_{qc}=-\sum_{a<b} e^{ab}\otimes e^a\wedge e^b - \sum_{a<b,s} e^{ab}\otimes e_a\hook\omega_s \wedge e_b\hook\omega_s -(\omega_s-2w_s\hook w^{123})\otimes \omega_s\\
-\sum_{a,s,r} e_a\hook\omega_r\wedge w^r \otimes w^s\otimes (e_a\hook \omega_s).
 \end{multline*}
In particular $\Omega_{qc}^{2,0}$ is traceless, and $\omega_{qc}$  is the qc connection. In fact, the curvature is contained in the trivial submodules of $R_1$ and $R_3$. This can also be seen as a consequence of the fact that the curvature  is both $G$-invariant and $H$-equivariant as a map
\[\Omega_\theta\colon G\to \Lambda^2T^*\otimes\lie{k},\]
and so must take values in an invariant space.
\end{example}

\begin{example}
Similarly, the qc connection on the homogeneous space \[\Sp(n,1)\Sp(1)/\Sp(n)\Sp(1)\]
is related to the connection $\omega$ considered in Section \ref{sec:example} via
\[\omega_{qc}=\omega+A, \quad A= e^a \otimes w^s\otimes (e_a\hook \omega_s);\]
in this case 
\begin{multline*}
 \Omega_{qc}= \sum_{a<b} e^{ab}\otimes e^a\wedge e^b + \sum_{a<b,s} e^{ab}\otimes e_a\hook\omega_s \wedge e_b\hook\omega_s +(\omega_s+ 2w_s\hook w^{123})\otimes \omega_s\\
-\sum_{a,s,r} e_a\hook\omega_r\wedge w^r \otimes w^s\otimes (e_a\hook \omega_s).
\end{multline*}
Thus, the scalar component in $R_3$ is the same as in the case of the sphere, whilst the component in $R_1$ has the opposite sign.
\end{example}

\begin{example}
Consider now the solvable Lie group $G$  of \cite{ContiFernandezSantisteban:qc} characterized by the existence of left-invariant one-forms $e^1,\dotsc, e^7$ such that
\begin{equation}  
\label{eqn:CFSExample}
\begin{split}
(de^1,\dotsc, de^7)=\bigl(0,e^{15}+ e^{34}- e^{46},  -& e^{24}+ e^{16}+ e^{45},-2   e^{14},\\
e^{12}-e^{34} + e^{46},&e^{13}-e^{42} -  e^{45},e^{14}-e^{23} + e^{56}\bigr)
\end{split}
\end{equation}
The coframe  $e^1,\dotsc, e^7$ defines a $K$-structure. Appling Proposition~\ref{prop:Bintrinsic} to the $(-)$ connection, we obtain:
\[\Theta_{-1}=e^{46}\otimes w_1 -e^{45}\otimes w_2 , \quad g(\Theta_{-1})= e^{134567}\otimes w_1 -e^{124567}\otimes w_2=(D\gamma)^{3,3}.\]
Thus $\Theta_2=\Theta_1=0$ and the structure is integrable.
The qc connection is given by
\[\begin{split}\omega_{qc}=\frac14 e^5\otimes \omega_1+\frac14 e^6\otimes \omega_2-(\frac12 e^4+\frac14e^7)\otimes \omega_3-(\frac32 e^4+\frac12e^7)\otimes (e^{14}+e^{23})&\\
 -\frac34 e^1 \otimes w^s\otimes(e_1\hook\omega_s)   +\frac14 e^2 \otimes w^s\otimes(e_2\hook\omega_s)+\frac14e^3 \otimes w^s\otimes(e_3\hook\omega_s)&\\
 -\frac34e^4 \otimes w^s\otimes(e_4\hook\omega_s).&
  \end{split}
\]
Its curvature is $\Omega_1+\Omega_3$, where
\[\begin{split}
\Omega_1=\frac14&\omega_s\otimes \omega_s+\frac12(e^{14}+e^{23})\otimes \omega_3+\frac12(5e^{14}+e^{23})\otimes (e^{14}+e^{23}),\\
 \Omega_3=\frac18 &e^{67}\otimes \omega_1-\frac18  e^{57}\otimes \omega_2-\frac38e^{56}\otimes \omega_3-\frac12e^{56}\otimes (e^{14}+e^{23})\\
& +\frac1{16}\sum_{a,r,s}( e_a\hook \omega_r)\wedge w^r \otimes w^s\otimes(e_a\hook\omega_s)
 +\frac1{2} e^{47} \otimes w^s\otimes(e_1\hook\omega_s)\\
&+ \frac1{2} (e^{15}- e^{46}) \otimes w^s\otimes(e_2\hook\omega_s)
+\frac1{2}( e^{16}+ e^{45})\otimes w^s\otimes(e_3\hook\omega_s)\\
 &-\frac1{2} e^{17} \otimes w^s\otimes(e_4\hook\omega_s).
\end{split}\]
In terms of the modules of Proposition~\ref{prop:Bianchi}, we see that the curvature is contained in $\tilde R_1+\tilde R_3$, and has a non-zero component in each irreducible submodule of $\tilde R_1+\tilde R_3$.
\end{example}

Recall that two qc manifolds $(M,\mathcal{D})$, $(M',\mathcal{D}')$ are said to be \emph{qc conformal} if there exists a diffeomorphism $\phi\colon M\to M'$ such that $d\phi$ maps $\mathcal{D}$ into $\mathcal{D}'$. It was proved in \cite{IvanovVassilev} that  a qc manifold is locally qc conformal to the Heisenberg group  if and only if a tensor called qc conformal curvature is zero (see also \cite{Kunkel,Alt:Weyl}).

The existence of such a local diffeomorphism that also preserves the $\Sp(n)\h^*$-structure can be characterized by the flatness of the qc connection:
\begin{corollary}
\label{cor:locallyequivalent}
A qc $\Sp(n)\h^*$-structure is locally equivalent to the standard structure on the Heisenberg group if and only if the qc connection is flat.
\end{corollary}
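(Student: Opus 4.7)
\emph{Plan.} The proof has two implications, both hinging on the uniqueness statement of Theorem~\ref{thm:hegelianconnection} together with the observation, from the Heisenberg example of Section~\ref{sec:example}, that on the Heisenberg group the qc connection coincides with the flat $(-)$-connection exhibited there.

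For the easy direction, given a local equivalence $\phi\colon U\subset M\to V$ of $\Sp(n)\h^*$-structures, where $V$ is open in the Heisenberg group, I would pull back the qc connection from $V$. Since that connection is flat with torsion $\Theta_0$, its pullback is a $K$-connection on $U$ with torsion $\Theta_0$ and vanishing curvature; zero curvature has in particular no $S^2H\subset R_1$ component, so condition (2) of Theorem~\ref{thm:hegelianconnection} is satisfied. By uniqueness, the pullback equals the qc connection of $M$ on $U$, which is therefore flat.

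For the converse, assume $\omega$ is flat on $\tilde P$. Extend it canonically to a $K$-connection on the full qc $K$-structure $P$, still flat and still with torsion $\Theta_0$. Flatness yields, locally on $M$, a section $s\colon U\to P$ with $s^*\omega=0$; the torsion equation $d\theta=-\omega\wedge\theta+\Theta_0$ then reduces to $d(s^*\theta)=\Theta_0(s^*\theta,s^*\theta)$, which are exactly the structure equations of the Heisenberg group. Integrating the coframe $s^*\theta$ via Cartan's theorem gives a local diffeomorphism $\phi$ from $U$ to the Heisenberg group identifying the two qc $K$-structures.

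The main obstacle is to promote $\phi$ to an equivalence of $\Sp(n)\h^*$-structures, that is, to arrange that the parallel section $s$ take values in $\tilde P$ and not merely in $P$. Writing $\omega=\omega_0+A$ according to the splitting $\lie{k}=\lie{sp}(n)\h^*\oplus EH$, flatness of $\omega$ decomposes into flatness of $\omega_0$ together with $\omega_0$-parallelism of $A$, so along any section $s$ of $\tilde P$ parallel for $\omega_0$ the $EH$-valued one-form $s^*A$ is closed. My plan is to use this closure, together with the freedom in the choice of flat trivialization of $P$, to realign $\phi_*\tilde P$ with the standard $\Sp(n)\h^*$-reduction of the Heisenberg group, after composition if necessary with a qc-conformal automorphism acting on the set of compatible reductions.
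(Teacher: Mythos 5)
Your first implication and the integration step of the converse follow the paper's own proof: the paper likewise observes that the flat $(-)$-connection on the Heisenberg group satisfies the two conditions of Theorem~\ref{thm:hegelianconnection}, and in the converse direction integrates a parallel coframe satisfying the Heisenberg structure equations via the standard Lie-theoretic result. You are also right to flag the question of whether the flat section can be taken inside $\tilde P$ rather than merely inside $P$ --- the paper's proof addresses this only through the word ``adapted''. The difficulty is that your resolution of this point is a plan, not an argument, and as described it cannot work. The discrepancy between an $\omega$-parallel section of $P$ and an $\omega_0$-parallel section of $\tilde P$ is a map $h\colon U\to EH$ with $dh=-s^*A$, a priori an arbitrary non-constant $EH$-valued function; the qc-conformal automorphisms of the Heisenberg group form a finite-dimensional group, and the residual freedom in the flat trivialization consists only of right translation by constant elements of $K$, so neither can absorb a non-constant $h$. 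Thus ``realigning'' $\phi_*\tilde P$ with the standard reduction is impossible unless $h$ is already trivial.

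The correct way to close the gap is to show that flatness forces $A=0$, so that the qc connection reduces to $\tilde P$ and its parallel coframes through points of $\tilde P$ are automatically adapted. This is exactly the Bianchi computation of Proposition~\ref{prop:Bianchi} specialized to $\Omega=0$: since the torsion is the constant $\Theta_0$ and $\Theta_0\hook\Theta_0=0$, one gets $0=\Omega\wedge\theta=D\Theta_0=s(A_\theta\cdot\Theta_0)$, where $A_\theta\cdot\Theta_0\in T^*\otimes\widetilde{EH}$ denotes the infinitesimal action of the $EH$-component of the connection on $\Theta_0$; by Lemma~\ref{lemma:linearized} that action is injective on $EH$ with image $\widetilde{EH}$, and by Lemma~\ref{lemma:Kcurvature} the map $s$ is injective on $T^*\otimes\widetilde{EH}$, whence $A_\theta=0$. (A minor point: the $EH$-component of the flatness equation gives $D^{\omega_0}A=0$, i.e.\ vanishing of the covariant exterior derivative, not $\omega_0$-parallelism of $A$; your conclusion that $s^*A$ is closed is nevertheless correct, it is just not needed once $A=0$ is known.) With this supplement your argument goes through and coincides with the paper's.
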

\begin{proof}
On the Heisenberg group,  the $(-)$ connection has torsion $\Theta_0$ and zero curvature, so it satisfies the conditions of Theorem~\ref{thm:hegelianconnection}; consequently, the qc connection is flat, and  one implication is proved.

Conversely, assume that $\omega_{qc}$ is flat. Then every point has a neighbourhood $U$ on which an adapted parallel coframe $e^1,\dotsc, e^{4n+3}$ is defined; since the torsion is $\Theta_0$, it follows that
\[de^1=0 =\dots = de^{4n}, \quad de^{4n+s}=\omega_s, s=1,2,3.\]
In other words, the dual basis of vector fields defines a subalgebra of $\mathfrak{X}(U)$ isomorphic to the Lie algebra of the Heisenberg group; by a standard result (see \cite[Theorem 1.8.3]{Duistermaat}), this shows that $U$ is locally equivalent to the Heisenberg group.
\end{proof}
Since  the qc connection on the sphere is not flat, it follows that the standard  $\Sp(n)\h^*$-structures on the sphere and the Heisenberg group are not equivalent. However, it was shown in \cite{IvanovMinchevVassilev} that these qc manifolds are locally conformally equivalent via the Cayley transform; this is not an equivalence of $\Sp(n)\h^*$-structures, because it does not preserve the complement.

\section{Qcm structures}
\label{sec:posit}
There is no natural  choice of metric on a qc $K$-structure, although there is a conformal class of metrics on the horizontal distribution. In this section we begin by fixing a metric in this class, partly to compare  our results with those of \cite{Biquard,Duchemin}. Indeed, we recover the known fact that the choice of metric determines the complement in full. In addition we construct  a canonical metric connection whose torsion lies in an affine space parallel to $S^2(EH)+W\otimes EH$. Its curvature lies in a ``small'' submodule which we identify, and is almost entirely determined by the torsion and its derivative. This connection differs from the Biquard connection; it should be regarded as the metric version of the qc connection of Section~\ref{sec:curvature}.

In our language, the choice of a metric amounts to considering an arbitrary reduction of an integrable qc $K$-structure to $G=\Sp(n)\Sp(1)\ltimes EH$. Since $\partial_G$ is injective, the argument of the remark on p.~\pageref{remark:snake} shows that the sequence
\[0\to \ker\partial_K\to T^*\otimes \R\xrightarrow{\partial} \coker \partial_{G}\to \coker\partial_K\to 0.\]
is exact.  Thus, the $G$-intrinsic torsion takes values in $\Theta_0^G+\partial(T^*\otimes \R)$. In analogy with Theorem~\ref{thm:canonicalKreduction}, this suggests that some reduction to $\Sp(n)\Sp(1)$  kills a component of the intrinsic torsion isomorphic to $EH$.

It turns out that such a reduction exists and is unique; in other words, a qc distribution and a compatible metric on the distribution determine a canonical  $\Sp(n)\Sp(1)$-structure:
\begin{proposition}
\label{prop:metricdeterminescomplement}
Given an integrable  qc structure and a choice of metric on the horizontal distribution compatible with the structure, there is a unique reduction $P$ to $\Sp(n)\Sp(1)$ such that $\Theta^G|_P\equiv \Theta_0^G$ and the metric is compatible with $P$.
\end{proposition}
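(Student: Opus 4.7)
The plan is to adapt the technique of Theorem~\ref{thm:canonicalKreduction}. The metric, combined with the canonical qc $K$-structure from Corollary~\ref{cor:Kreduction}, determines a $G$-structure $P_G$ sitting inside the $K$-structure, where $G=\Sp(n)\Sp(1)\ltimes EH$ arises as the stabilizer of the metric inside $K$. Since $G/\Sp(n)\Sp(1)\cong EH$, reductions of $P_G$ to $\Sp(n)\Sp(1)$ are parametrized by sections of an associated bundle with fibre $EH$; the task is to single out the unique section on which $\Theta^G$ is constantly $\Theta_0^G$.

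By Theorem~\ref{thm:Kintrinsictorsion} we have $\Theta^K\equiv\Theta_0^K$, so the exact sequence
\[0\to \ker\partial_K\to T^*\otimes\R\xrightarrow{\partial}\coker\partial_G\to\coker\partial_K\to 0\]
displayed at the start of the section forces $\Theta^G(u)-\Theta_0^G\in\partial(T^*\otimes\R)\subset\coker\partial_G$ for every $u\in P_G$. A dimension count via Lemma~\ref{lemma:partialK} (where $\ker\partial_K\cong S^2H$ is $3$\dash dimensional) gives $\dim\partial(T^*\otimes\R)=(4n+3)-3=4n=\dim EH$, so one expects the orbit map $EH\to\coker\partial_G$, $g\mapsto g\Theta_0^G$, to be a bijection onto $\Theta_0^G+\partial(T^*\otimes\R)$.

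The central claim to verify is that the $EH$-action on $\Theta_0^G$ in $\coker\partial_G$ is free. Since $EH$ is abelian and acts by unipotent affine transformations on $\Lambda^2T^*\otimes T$, its orbit through $\Theta_0^G$ is an affine subspace, generated as a torsor by the image of the infinitesimal action $EH\to\coker\partial_G$, $\xi\mapsto \xi\cdot\Theta_0$. The computation of Lemma~\ref{lemma:Kintrinsictorsion} shows that the analogous linearised action vanishes modulo $\im\partial_K$, but a careful look at the proof reveals that the connection adjustment which trivialises $\xi\cdot\Theta_0$ modulo $\im\partial_K$ is supported in the $\lie{k}/\lie{g}\cong\R$ summand (generated by the scalar $-\id_V-2\id_W$) rather than in $\lie{g}$. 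Consequently $\xi\cdot\Theta_0$ projects to a nonzero element of $\partial(T^*\otimes\R)\subset\coker\partial_G$ whenever $\xi\neq 0$; together with the dimension count this identifies the orbit with the whole affine space $\Theta_0^G+\partial(T^*\otimes\R)$.

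Granted this, for each $u\in P_G$ there is a unique $g(u)\in EH$ with $g(u)\Theta_0^G=\Theta^G(u)$, and I set $\tilde P=\{u\in P_G\mid\Theta^G(u)=\Theta_0^G\}$. This set is closed under $\Sp(n)\Sp(1)$ (which fixes $\Theta_0^G$), and each fibre of $P_G\to M$ meets $\tilde P$ in the single $\Sp(n)\Sp(1)$\dash orbit $u\cdot g(u)^{-1}\cdot\Sp(n)\Sp(1)$, so $\tilde P$ is the required $\Sp(n)\Sp(1)$\dash reduction; it is automatically compatible with the metric since $\tilde P\subset P_G$, and uniqueness is immediate from uniqueness of $g(u)$. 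The main obstacle is the third paragraph: explicitly locating the preimage of $\xi\cdot\Theta_0$ under $\partial_K$ in $T^*\otimes\lie{k}$ and checking that its $\lie{k}/\lie{g}$-component does not vanish, so that $\xi\cdot\Theta_0$ survives modulo $\im\partial_G$.
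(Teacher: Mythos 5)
Your overall strategy is the paper's: pass to the canonical $K$-structure, use the metric to cut down to $G=\Sp(n)\Sp(1)\ltimes EH$, and show that the $EH$-orbit of $\Theta_0^G$ in $\coker\partial_G$ fills the affine space $\Theta_0^G+\partial(T^*\otimes\R)$ in which the $G$-intrinsic torsion is constrained to lie, so that the mechanism of Theorem~\ref{thm:canonicalKreduction} produces the reduction. However, the step you yourself flag as ``the main obstacle'' is precisely the substance of the paper's proof, and your sketch of it has two genuine problems. First, the orbit of $\Theta_0$ under the unipotent group $EH$ is \emph{not} an affine subspace of $\Lambda^2T^*\otimes T$: the action of $1+N$ on a triple tensor product is cubic in $N$, and the computation in Lemma~\ref{lemma:Kintrinsictorsion} exhibits explicit quadratic and cubic terms such as $-\norm{v}^2\eta\hook\sigma$ and $-w^s\wedge((v\hook\omega_s)\hook\omega_r)\otimes(v\hook\omega_r)$. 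The induced orbit in $\coker\partial_G$ is affine only because these higher-order terms lie in $\im\partial_G$, which is an additional check (the paper invokes the calculations of Lemma~\ref{lemma:Kintrinsictorsion} for exactly this). Second, your heuristic for why $\xi\cdot\Theta_0$ survives in $\coker\partial_G$ is inaccurate: by Lemma~\ref{lemma:linearized} the infinitesimal action spans the $EH$ containing $\frac12\tilde\alpha_1-\alpha_4+i\alpha_5$, and by Proposition~\ref{prop:basis} its preimage under $\partial_B$ has components in $V^*\otimes\lie{sp}(n)$ and $V^*\otimes\lie{sp}(1)$ as well as in $V^*\otimes\R$ --- it is not ``supported in the $\R$ summand.'' What one actually needs is that the $V^*\otimes\R$-component is nonzero and cannot be cancelled by adding an element of $\ker\partial_B$ (whose $V^*$-part lies in $V^*\otimes\Hom(W,V)$ by Corollary~\ref{cor:kerpartialB}); this is what makes $\frac12\tilde\alpha_1-\alpha_4+i\alpha_5$ lie in $\im\partial_B$ but not in $\im\partial_G$.

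There is also a gap in the uniqueness claim: your argument only shows that $\tilde P$ is the unique reduction \emph{contained in $P_G$} with the stated property, whereas the proposition asserts uniqueness among all metric-compatible $\Sp(n)\Sp(1)$-reductions of the qc structure. The paper closes this by observing that $\Theta^G|_P\equiv\Theta_0^G$ forces $\Theta^B|_P\equiv\Theta_0^B$, so any such $P$ generates the canonical $K$-reduction of Corollary~\ref{cor:Kreduction} and therefore sits inside $P_G$.
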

\begin{proof}
Corollary~\ref{cor:Kreduction} gives a canonical $K$-reduction; by Theorem~\ref{thm:hegelianconnection}, its $K$-intrinsic torsion is $\Theta_0^K$. The choice of metric gives a reduction to $G$.

In terms of the structure group $\Sp(n)\Sp(1)$, the calculations of Lemma~\ref{lemma:Kintrinsictorsion} give
\[
( w^s\otimes v\hook\omega_s) \omega_r\otimes w_r-\Theta_0 = ( w^s\otimes v\hook\omega_s) \cdot\Theta_0\mod\im\partial_{G}.\]
By Lemma~\ref{lemma:linearized}, the right hand side lies in the $EH$ containing 
$\frac12\tilde\alpha_1-\alpha_4+i\alpha_5$,
which is not in $\im\partial_{G}$. Thus,  the argument of Theorem~\ref{thm:canonicalKreduction} applies, and we find a unique reduction to $\Sp(n)\Sp(1)$ that satisfies the required condition.

The uniqueness follows from the fact that the torsion condition in the definition implies that the $B$-intrinsic torsion is $\Theta_0^B$, which makes the reduction to $K$ unique.
\end{proof}

We can now prove that the two definitions of integrability for seven-dimensional qc structures agree:
\begin{corollary}
\label{cor:integrableisintegrable}
A qc structure is integrable in the sense of Duchemin if and only if it is compatible with an integrable qc $K$-structure.
\end{corollary}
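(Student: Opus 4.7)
If $n > 1$, Corollary~\ref{cor:Kreduction} forces $\Theta^B = \Theta_0^B$ on every qc $K$-structure, and Duchemin's condition is automatic by \cite{Duchemin}, so the equivalence is trivial; I therefore assume $n = 1$. Choose a compatible metric and let $P$ be the canonical $\Sp(1)\Sp(1)$-reduction furnished by Proposition~\ref{prop:metricdeterminescomplement}, with distinguished transversal vectors $R_s := e_{4n+s}$. The Duchemin tensor $A^s_r := (R_r \hook d\eta^s)|_\mathcal{D}$ assembles into an element $A \in V^* \otimes W^* \otimes W$; identifying $W \cong W^*$ via the metric, the existence of transversal vectors satisfying Duchemin's symmetry is equivalent to the vanishing of the symmetric part $\pi_{\mathrm{sym}}(A) \in V^* \otimes S^2 W^*$ modulo the image of the equivariant gauge map
\[
 g \colon V \otimes W \to V^* \otimes S^2 W^*, \qquad \sum\nolimits_t v_t \otimes w_t \longmapsto \sum\nolimits_{r,s} (v_r \hook \omega_s + v_s \hook \omega_r) \otimes w^r \odot w^s,
\]
which records the freedom $R_s \mapsto R_s + v_s$ with $v_s \in \mathcal{D}$.

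Decomposing into $\Sp(1)\Sp(1)$-irreducibles, $V^* \otimes S^2 W^* = ES^5H + ES^3H + EH$ while $V \otimes W = ES^3H + EH$. By Schur's lemma the image of $g$ lies in $ES^3H + EH$, and a direct check on highest weight vectors confirms that $g$ restricts to an isomorphism onto each summand. Consequently Duchemin's condition is equivalent to the vanishing of the $ES^5H$-component of $\pi_{\mathrm{sym}}(A)$.

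To identify this invariant with $\Theta^B$, pick any connection on $P$ with $\Theta_2 \in ES^5H$ (permitted by the remark after Corollary~\ref{cor:Kreduction}). A direct computation from the structure equation, using Proposition~\ref{prop:Bintrinsic}, produces the decomposition $A = \Theta_2 + \Theta_{-1} - \omega^r_t|_V \wedge \eta^t$, where $\omega^r_t$ is the $W \to W$ block of $\omega$, taking values in the image of $\lie{sp}(1) + \R$ in $W^* \otimes W$. Since $\lie{sp}(1)$ embeds into $\Lambda^2 W^*$ and the $\R$-factor acts as a multiple of $\id_W$, the symmetric parts of both $\omega^r_t|_V \wedge \eta^t$ and $\Theta_{-1} \in \pi_{-1}(\im\partial_B)$ lie in $V^* \otimes \R \cong EH$ and contribute nothing to the $ES^5H$-component of $\pi_{\mathrm{sym}}(A)$. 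Thus the $ES^5H$-component of $\pi_{\mathrm{sym}}(A)$ equals $\Theta_2$ itself, whose vanishing is precisely the integrability of the qc $K$-structure. The delicate step is this final identification, where one must track how the connection-dependent terms enter $\pi_{\mathrm{sym}}(A)$ and verify by Schur that only $\Theta_2$ survives in the $ES^5H$-component.
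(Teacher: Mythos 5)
Your core computation is the right one, and it is essentially the paper's: both arguments rest on Proposition~\ref{prop:Bintrinsic}'s identity $(D\eta)^{1,1}=\Theta_2+\Theta_{-1}$ together with the observation that the connection term $\omega^r_t|_V\wedge\eta^t$ and $\Theta_{-1}$ take values in $V^*\otimes(\Lambda^2W^*+\R\,\id_W)$ and hence contribute nothing to the traceless symmetric part, so that the $V^*\otimes S^2_0W$-component of $(s^*d\eta)^{1,1}$ is exactly $\Theta_2$. Your explicit treatment of the gauge freedom $R_s\mapsto R_s+v_s$ via the map $g\colon V\otimes W\to V^*\otimes S^2W^*$ is a nice way to package both directions at once (the paper instead runs the two implications with different frames and connections), and your identification $\im g=ES^3H+EH$ checks out.

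There is, however, a genuine logical gap at the very first step: you build your frame by invoking Proposition~\ref{prop:metricdeterminescomplement}, but that proposition is stated (and proved, via Theorem~\ref{thm:hegelianconnection}) only for \emph{integrable} qc structures, and indeed for $n=1$ a non-integrable structure admits no reduction with $\Theta^G\equiv\Theta_0^G$, since the $ES^5H$-component of the intrinsic torsion cannot be gauged away. So in the direction ``Duchemin-integrable $\Rightarrow$ integrable qc $K$-structure'' your argument assumes what it is trying to prove. The fix is cheap: the canonical $K$-reduction of Corollary~\ref{cor:Kreduction} exists for \emph{every} qc $B$-structure, and any $\Sp(1)\h^*$-reduction of it (together with a compatible metric and a connection with $\Theta_1=0$, $\Theta_2\in ES^5H$, as in the remark following that corollary) suffices for your computation --- or, as in the paper's own proof of the forward implication, one can work with an arbitrary section and an arbitrary $B$-connection, since $\Theta_2$ is the only term in $(s^*d\eta)^{1,1}$ with a component in $V\otimes S^2_0W$ regardless of the frame. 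A second, minor imprecision: on a genuine $\Sp(1)\Sp(1)$-connection the $W\to W$ block lies in $\widetilde{\Ad}(\lie{sp}(1))$ alone, with no $\R$-factor; your statement is correct for $K$- or $B$-connections, and harmless either way, but you should say which connection you mean.
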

\begin{proof}
An integrable qc structure in the sense of Duchemin is a qc $B$-structure that about each point admits a section $s$ such that (with obvious notation)
\[(s^*d\eta)^{1,1}\in \Lambda^{1,1}\otimes W\cong V\otimes W\otimes W\]
is skew-symmetric in the last two indices.
Fix a $B$-connection $\omega$ on such a structure; Proposition~\ref{prop:Bintrinsic} gives
\[(d\eta+\omega\wedge\eta)^{1,1}=(D\eta)^{1,1} = \Theta_2+\Theta_{-1}.\]
By construction, both $s^*(\omega\wedge\eta)$ and $\Theta_{-1}$ have no component in $V\otimes S^2_0W$. Thus, if $s^*(d\eta)^{1,1}$ is skew-symmetric then $\Theta_2$ is zero. 

Conversely, given an integrable qc $K$-structure, take  a reduction $P$ such as in Proposition~\ref{prop:metricdeterminescomplement}, and choose a $G$-connection on $P$ with torsion
$\Theta=\Theta_0$. Then 
\[(d\eta+\omega\wedge\eta)^{1,1}=(D\eta)^{1,1} = 0;\]
because of how $G$ is defined,  in this case $s^*(\omega\wedge\eta)$ has no component in $V\otimes S^2W$, so $s^*(d\eta)^{1,1}$ is skew-symmetric for any section of  $P$.
\end{proof}

Proposition~\ref{prop:metricdeterminescomplement}  motivates the following:
\begin{definition}
\label{def:qcm}
A quaternionic-contact metric (qcm) structure  on a $4n+3$-dimensional manifold is 
an  $\Sp(n)\Sp(1)$-structure with $G$-intrinsic torsion equal to $\Theta_0^G$.
\end{definition}
In this language, Proposition~\ref{prop:metricdeterminescomplement} asserts that an integrable qc structure determines a family of qcm structures, one for each choice of compatible metric on the horizontal distribution. Notice that the qc structure underlying a qcm structure is always integrable.

\smallskip
Having reduced the structure group, we refine the usual decomposition as
\[\begin{split}
\Lambda^2T^*\otimes T = \im(\partial_{\Sp(n)\Sp(1)})\oplus\Lambda^2V^*\otimes W  &\oplus \partial_1(W_1)\oplus\partial_2(W_2)\oplus \partial(W_3)\\ &\oplus\partial(V^*\otimes\R)\oplus \partial(T^*\otimes EH )
  \end{split},
\]
and decompose the torsion of any $\Sp(n)\Sp(1)$-connection as
\begin{equation}
 \label{eqn:SpnSp1decomp}
\Theta=\Theta_*+\Theta^Q+\Theta_1+\dots + \Theta_5.
\end{equation}
By construction, any connection on a qcm structure  satisfies
\begin{equation*}
\Theta^Q=\Theta_0, \quad\Theta_1=\dots = \Theta_4=0.
\end{equation*}

We shall need the following lemma in order to characterize the curvature of the qcm connection. The final part of the lemma will also be used in the proof of Corollary~\ref{cor:fourformclosed}.
\begin{lemma}
\label{lemma:projections}
Consider the projections
\[p_1^{\lie{sp}(1)}\colon \tilde R_1\to \Lambda^{2,0}\otimes\lie{sp}(1), \quad p_2^{\lie{sp}(1)}\colon \tilde R_2\to \Lambda^{1,1}\otimes\lie{sp}(1),\]
\[p_2^{EH}\colon \tilde R_2\to \Lambda^{2,0}\otimes EH,\quad  p_3^{EH}\colon \tilde R_3\to \Lambda^{1,1}\otimes EH.\]
Then 
\[\im p_1^{\lie{sp}(1)}\cong S^2(EH), \quad \im p_2^{\lie{sp}(1)}\cong EH+ES^3H, \quad \ker p_2^{EH}=0= \ker  p_3^{EH}.\]
Moreover, if $n>1$, and $\R$ denotes the trivial module in $\Lambda^{1,0}\otimes EH$,
\[\im p_2^{EH}\cap (\lie{sp}(1)\otimes EH + \Lambda^{1,0}\wedge\R) = 0.\]
If $n=1$, $\im p_2^{EH}$ is the direct sum of $ES^3H\subset \lie{sp}(1)\otimes EH$ and a diagonal $EH$  in $\lie{sp}(1)\otimes EH + \Lambda^{1,0}\wedge\R$.
\end{lemma}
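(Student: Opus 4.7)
The plan is uniform: each projection $p_i^\bullet$ is $\Sp(n)\Sp(1)$-equivariant, so its image and kernel are sums of the irreducible summands of $\tilde R_i$ catalogued in Lemma~\ref{lemma:Kcurvature}. The question reduces to deciding, summand by summand, whether the projection vanishes, which by Schur's lemma reduces in turn to evaluating the projection on the explicit highest weight vectors produced in that lemma, unless a multiplicity in the target forces us to track a specific linear combination.

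For $p_1^{\lie{sp}(1)}$, the $S^4E$ summand of $\tilde R_1$ was identified in Lemma~\ref{lemma:Kcurvature} with the space of hyperk\"ahler curvature tensors, hence lies in $\Lambda^{2,0}\otimes\lie{sp}(n)$ and is killed. Decomposing $\Lambda^{2,0}\otimes\lie{sp}(1)\cong\Lambda^2(EH)\otimes S^2H$ into irreducibles, each of $S^2ES^2H$, $\Lambda^2_0E$, $\R$ appears with multiplicity one; Lemma~\ref{lemma:Kcurvature} realises the corresponding summands of $\tilde R_1$ explicitly inside the sub-factor $(S^2E+\Lambda^2_0E+\R)\otimes S^2H$ of $R_1$, which already sits in $\Lambda^{2,0}\otimes\lie{sp}(1)$, so all three projections are non-zero and $\im p_1^{\lie{sp}(1)}\cong S^2(EH)$. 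For $p_2^{\lie{sp}(1)}$, I would decompose $\Lambda^{1,1}\otimes\lie{sp}(1)$ and observe that no $S^3EH$ appears there, forcing the $S^3EH\subset\tilde R_2$ to project trivially, while the other two summands $ES^3H$ and $EH$ are represented by the vectors \eqref{eqn:ES3HinR2} and \eqref{eqn:EHinR2}, which visibly have non-zero $\lie{sp}(1)$-part.

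Injectivity of $p_2^{EH}$ and $p_3^{EH}$ follows from facts already established in the proof of Lemma~\ref{lemma:Kcurvature}: $\delta_2$ is injective on $\Lambda^{1,1}\otimes\lie{sp}(n)+\Lambda^{1,0}\wedge S^2H$ and $\delta_3$ is injective on $\Lambda^{1,1}\otimes EH$. An element of $\tilde R_2$ whose $\Lambda^{2,0}\otimes EH$-component vanishes lies entirely in $\Lambda^{1,1}\otimes(\lie{sp}(n)+\h)$; but an element of $\ker\delta_2$ in this complementary subspace is zero by the stated injectivity, contradicting non-triviality in $\tilde R_2$. The argument for $p_3^{EH}$ is parallel, using the injectivity of $\delta_3|_{\Lambda^{1,1}\otimes EH}$ and the fact that the summands of $\tilde R_3$ do intersect $\Lambda^{1,1}\otimes EH$ non-trivially by construction.

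The final statement about the intersection is the main obstacle. Decomposing $\lie{sp}(1)\otimes EH+\Lambda^{1,0}\wedge\R$, where $\R\subset\Lambda^{1,0}\otimes EH$ is the metric, gives $ES^3H+2\cdot EH$ as an $\Sp(n)\Sp(1)$-module (one $EH$ from each summand). Since $\im p_2^{EH}$ contains one copy each of $S^3EH$, $ES^3H$, $EH$ (which for $n>1$ is its full decomposition), only the $ES^3H$ and $EH$ types can contribute to the intersection. I would substitute the explicit highest weight vectors of Lemma~\ref{lemma:Kcurvature}, apply $p_2^{EH}$, and compare with representatives of the $ES^3H$ and the two $EH$-copies in $\lie{sp}(1)\otimes EH+\Lambda^{1,0}\wedge\R$: the target is that, for $n>1$, no non-zero linear combination matches, while for $n=1$ the collapse $V_{21}=0$ and related coincidences produce the diagonal $EH$ and the embedded $ES^3H$ stated in the lemma. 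Tracking the coefficients in this comparison, in particular pinning down precisely which diagonal $EH$ occurs for $n=1$, is where the bulk of the computational work lies.
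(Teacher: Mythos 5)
Your overall strategy (equivariance plus Schur's lemma, then explicit highest weight vectors where multiplicities demand it) is the same as the paper's, but two of your individual steps rest on misreadings of Lemma~\ref{lemma:Kcurvature}, and the step that actually distinguishes $n=1$ from $n>1$ is left undone. For $p_1^{\lie{sp}(1)}$ you assert that the summands $S^2ES^2H$, $\Lambda^2_0E$, $\R$ of $\tilde R_1$ ``already sit in $\Lambda^{2,0}\otimes\lie{sp}(1)$''. The expression $(S^2E+\Lambda^2_0E+\R)(S^2H+\R)$ in Lemma~\ref{lemma:Kcurvature} is only an abstract isomorphism type for $R_1/\ker\delta_1$: these summands are realised as $\delta_1$-preimages of $s(V^*\otimes\widetilde{EH})$ and a priori have components in $\Lambda^{2,0}\otimes\lie{sp}(n)$ as well. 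Nor does your multiplicity count rescue this: $S^2ES^2H$ and $\Lambda^2_0E$ also occur inside $\Lambda^{2,0}\otimes\lie{sp}(n)$ (e.g.\ $\Lambda^2_0E\subset S^2E\otimes S^2E$), so Schur's lemma does not force the projection to be nonzero on them. What must be verified --- and what the paper verifies --- is that an element of $\tilde R_1$ lying entirely in $\Lambda^{2,0}\otimes\lie{sp}(n)$ is forced into $\ker\delta_1=S^4E$, because the $S^2(EH)$-part of $s(V^*\otimes\widetilde{EH})$ meets $\Lambda^{3,0}\otimes V\supset\delta(\Lambda^{2,0}\otimes\lie{sp}(n))$ trivially.

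Second, your injectivity argument for $p_2^{EH}$ only treats elements of $\ker\delta_2$, but $\tilde R_2$ is not contained in $\ker\delta_2$: by Lemma~\ref{lemma:Kcurvature}, $R_2=\ker\delta_2+EH+ES^3H$, where the extra summands are $\delta$-preimages of $s(W^*\otimes\widetilde{EH})$ on which $\delta$ is nonzero, and the traceless condition mixes them with $\ker\delta_2$. So an element of $\tilde R_2$ with vanishing $\Lambda^{2,0}\otimes EH$-component need not lie in $\ker\delta_2$; one must additionally show that $\delta(\Lambda^{1,1}\otimes(\lie{sp}(n)+\lie{sp}(1)))$ meets $s(W^*\otimes\widetilde{EH})$ trivially, which the paper reduces (stripping off the $W^*$ factor) to the fact from Lemma~\ref{lemma:formulae} that $\tilde\alpha_1$ is not a linear combination of $\partial(\alpha_1)$ and $\partial(\alpha_2)$. (Your argument for $p_3^{EH}$ does close, because $R_3=\ker\delta_3$ --- though the injectivity you need is that of $\delta$ on $\Lambda^{0,2}\otimes(\lie{sp}(n)+\lie{sp}(1))$, not on $\Lambda^{1,1}\otimes EH$.) Finally, the last assertion is precisely where an explicit identity is unavoidable: the paper exhibits, for $n=1$, an element of $\Lambda^{1,1}\otimes(\lie{sp}(n)+\lie{sp}(1))+\lie{sp}(1)\otimes EH$ whose image under $\delta$ equals $s\bigl((w^2+iw^3)\otimes(\tfrac12\tilde\alpha_1-\alpha_4+i\alpha_5)\bigr)$, and checks that no such identity holds for $n>1$. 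You correctly reduce the question to the isotypic components $ES^3H$ and $EH$, but the $n=1$ versus $n>1$ dichotomy lives entirely in the computation you defer, so as it stands the proposal does not establish the statement.
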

 \begin{proof}
By construction $\ker p_1^{\lie{sp}(1)}$ is the preimage  of $s(V^*\otimes\widetilde{EH})$ in $\Lambda^{2,0}\otimes \lie{sp}(n)$ under $\delta$.  It is easy to check that the submodule of $s(V^*\otimes \widetilde{EH})$ isomorphic to $S^2(EH)$ is transverse to $\Lambda^{3,0}\otimes V$, which contains $\delta(\Lambda^{2,0}\otimes \lie{sp}(n))$. Thus, $\ker p_1^{\lie{sp}(1)}$ is the kernel of  $\delta$ in $\Lambda^{2,0}\otimes \lie{sp}(n)$.

The fact that the image of $p_2^{\lie{sp}(1)}$ contains  $EH+ES^3H$ follows immediately from \eqref{eqn:ES3HinR2} and \eqref{eqn:EHinR2}. By Schur's lemma, equality holds.

The projection $p_3^{EH}$ is injective because otherwise $\delta$ would not be injective on $\Lambda^{0,2}\otimes ( \lie{sp}(n)+\lie{sp}(1))$, which is absurd.

Similarly, if $p_2^{EH}$ were not injective then $\delta(\Lambda^{1,1}\otimes(\lie{sp}(n)+\lie{sp}(1)))$ would intersect  $s(W^*\otimes\widetilde{EH})$ non-trivially; then $\partial(\Lambda^{1,0}\otimes 
(\lie{sp}(n)+\lie{sp}(1)))$ would intersect $\widetilde EH$, which is absurd because by Lemma~\ref{lemma:formulae} $\tilde\alpha_1$ is not a linear combination of $\partial(\alpha_1)$ and $\partial(\alpha_2)$.

The last part of the statement amounts to proving that the intersection
\[\delta(\Lambda^{2,0}\otimes (\lie{sp}(n)+\lie{sp}(1)) + \lie{sp}(1)\otimes EH+ \Lambda^{1,0}\wedge\R)\cap s(W^*\otimes \widetilde{EH})\]
is  $EH+ES^3H$ when $n=1$ and zero otherwise.
Computing with highest weight vectors, we see that for $n=1$
\begin{multline*}
 s((w^2+iw^3)\otimes ( \frac12\tilde\alpha_1-\alpha_4+i\alpha_5)\\
=-\frac12\delta\biggl(((w^2+iw^3)\wedge v_1h_1+ i w^1\wedge v_1h_2)\otimes (\omega_2+i\omega_3)
+(w^2+iw^3)\wedge\alpha_2\\
+v_jh_2\wedge v_{n+j}h_2 \wedge \bigl(iw^1\otimes v_1h_2 +  (w^2+iw^3)\otimes v_1h_1\bigr)\biggr),
\end{multline*}
showing that the intersection contains $ES^3H$; no such equality holds  for $n>1$. Similarly, one verifies that the intersection only contains $EH$ when $n=1$, but the relevant $EH$ projects non-trivially to both  $\lie{sp}(1)\otimes EH$ and $\Lambda^{1,0}\wedge\R$.
\end{proof}

\begin{theorem}
\label{thm:qcmintrinsic}
The intrinsic torsion of a qcm structure  lies in 
\[\Span{\Theta_0} + S^2ES^2H+\Lambda^2_0E+\R+EH+ES^3H.\]
In particular any qcm structure has a unique connection with torsion
\[\Theta_0+\partial(\chi_V)+\partial(\chi_W), \quad \chi_V\in S^2(EH)\subset V^*\otimes EH, \quad \chi_W\in W^*\otimes EH;\]
its curvature satisfies
\begin{gather*}
\Omega^{2,0}\in S^2(EH) + S^4E, \quad 
\Omega^{1,1}\in S^3EH+ES^3H+EH,\\
\Omega^{0,2}\in S^2ES^2H+S^4H+\R.
\end{gather*}
Moreover there are linear equivariant maps
\begin{gather*}
f_1\colon S^2(EH)\to \Lambda^{2,0}\otimes (\lie{sp}(n)+\lie{sp}(1)), \quad
f_2\colon W^*\otimes EH\to \Lambda^{1,1}\otimes (\lie{sp}(n)+\lie{sp}(1))\\
f_3\colon \Lambda^{2,0}\otimes EH\to  \Lambda^{1,1}\otimes (\lie{sp}(n)+\lie{sp}(1)), \quad
f_4\colon \Lambda^{1,1}\otimes EH\to  \Lambda^{0,2}\otimes (\lie{sp}(n)+\lie{sp}(1))
\end{gather*}
such that $f_1$ and $f_2$ are injective,
\begin{gather*}
\Omega^{2,0}-f_1(\chi_V) \in S^4E, \quad \Omega^{1,1}-f_2(\chi_W)\in S^3EH, 
\quad \Omega^{0,2}=f_4(D(\chi_V+\chi_W)^{1,1}),
\end{gather*}
and $\Omega^{1,1}-f_3((D\chi_V)^{2,0})$ is in  $ES^3H$ (and zero when $n>1$).
\end{theorem}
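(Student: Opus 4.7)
The argument proceeds in four stages, with the bidegree-by-bidegree analysis of the Bianchi identity forming the bulk of the technical work.

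First, I identify the intrinsic torsion space. By Definition~\ref{def:qcm} and Proposition~\ref{prop:metricdeterminescomplement}, a qcm structure is an $\Sp(n)\Sp(1)$-structure whose extension to $G$ has $G$-intrinsic torsion equal to $\Theta_0^G$. Applying the snake lemma to the inclusion $T^*\otimes\lie{sp}(n)\lie{sp}(1)\subset T^*\otimes\lie{g}$ and using the injectivity of $\partial_G$ noted at the start of Section~\ref{sec:posit}, one obtains the short exact sequence
\[0\to T^*\otimes EH\to \coker\partial_{\Sp(n)\Sp(1)}\to \coker\partial_G\to 0,\]
so that the intrinsic torsion lies in $\Theta_0+T^*\otimes EH$. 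The $\Sp(n)\Sp(1)$-module decompositions $V^*\otimes EH=S^2(EH)\oplus \Lambda^2(EH)$ and $W^*\otimes EH=EH\oplus ES^3H$ reduce the claim to showing that the $\Lambda^2(EH)$ component is absent. This follows from the canonical nature of the qcm reduction: the change of $\Sp(n)\Sp(1)$-reduction inside a fixed $G$-structure is parametrized by $EH$-valued sections, and by a computation analogous to that in Proposition~\ref{prop:metricdeterminescomplement} (applied to $V^*\otimes EH$ in place of $V^*\otimes\frac{\lie{b}}{\lie{k}}$), the resulting motion of the $\Sp(n)\Sp(1)$-intrinsic torsion spans exactly $\Lambda^2(EH)$ in $\coker\partial_{\Sp(n)\Sp(1)}$, and the unique canonical reduction eliminates this image. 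The uniqueness of the qcm connection is then immediate: $\partial_G$ injective implies $\partial_{\Sp(n)\Sp(1)}$ injective, so each class in $\coker\partial_{\Sp(n)\Sp(1)}$ is represented uniquely by a torsion tensor $\Theta_0+\partial\chi_V+\partial\chi_W$ and realized by a unique $\Sp(n)\Sp(1)$-connection.

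Next, the curvature constraints come from the Bianchi identity $D\Theta=\Omega\wedge\theta$. Since $\Theta_0$ is $\Sp(n)\Sp(1)$-invariant we have $\nabla\Theta_0=0$, so $D\Theta_0=\Theta\hook\Theta_0$ lies in $s(T^*\otimes\widetilde{EH})$ by the argument in the proof of Proposition~\ref{prop:Bianchi}. Using~\eqref{eqn:Dalphanablaalpha}, the pieces $D(\partial\chi_V)$ and $D(\partial\chi_W)$ expand into explicit polynomial expressions in $\chi_V$, $\chi_W$, and their covariant derivatives. Decomposing the Bianchi identity by bidegree and combining with Lemma~\ref{lemma:Kcurvature} (which catalogues the preimage of $s(T^*\otimes\widetilde{EH})$ under $\delta$) and Lemma~\ref{lemma:projections} (which identifies the $\lie{sp}(1)$- and $EH$-summands of the $R_i$) yields the stated module constraints $\Omega^{2,0}\in S^2(EH)+S^4E$, $\Omega^{1,1}\in S^3EH+ES^3H+EH$, and $\Omega^{0,2}\in S^2ES^2H+S^4H+\R$.

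Finally, the maps $f_i$ are the explicit linear functions coming from the solved Bianchi identities: $f_1$ sends $\chi_V$ to the $\lie{sp}(1)$-type contribution to $\Omega^{2,0}$ extracted from the $(3,0)$-bidegree of $D\Theta$; $f_2$ extracts the $\lie{sp}(1)$-contribution to $\Omega^{1,1}$ from the $(2,1)$-bidegree; $f_3$ and $f_4$ respectively extract the contributions of $(D\chi_V)^{2,0}$ and $D(\chi_V+\chi_W)^{1,1}$ to the remaining parts of $\Omega^{1,1}$ and to all of $\Omega^{0,2}$. Injectivity of $f_1$ and $f_2$ is forced by $\im p_1^{\lie{sp}(1)}\cong S^2(EH)$ and the injectivity of $p_2^{EH}$ in Lemma~\ref{lemma:projections}; the fact that the residuals lie in $S^4E$, $S^3EH$, and $\R+S^2ES^2H+S^4H$ corresponds to the kernels $\tilde R_i\setminus\im p_i^{\bullet}$ in that lemma; the $n$-dependence of $\Omega^{1,1}-f_3((D\chi_V)^{2,0})\in ES^3H$ mirrors the final part of Lemma~\ref{lemma:projections}, where $\im p_2^{EH}$ acquires an extra summand for $n=1$. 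The main obstacle is the bookkeeping in the third and fourth stages: one must determine, for each irreducible $\Sp(n)\Sp(1)$-summand of $\Lambda^2T^*\otimes(\lie{sp}(n)+\lie{sp}(1))$ appearing in the curvature, whether it is populated by $\Theta\hook\Theta_0$, by $D(\partial\chi_V)$, or by $D(\partial\chi_W)$, which requires careful explicit computation with highest weight vectors, the $n=1$ case demanding separate treatment due to collapses in the representation theory.
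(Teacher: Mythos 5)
Your first-stage argument for the absence of the $\Lambda^2(EH)$ component of $\chi_V$ does not work. You attribute it to the normalization of the $\Sp(n)\Sp(1)$-reduction, claiming the motion of the intrinsic torsion under a change of reduction ``spans exactly $\Lambda^2(EH)$''. This is impossible on dimensional grounds: the reductions inside a fixed $G$-structure are parametrized by sections valued in $G/\Sp(n)\Sp(1)\cong EH$, which has dimension $4n$, while $\Lambda^2(EH)=S^2E+\Lambda^2_0ES^2H+S^2H$ has dimension $8n^2-2n$. Moreover, the computation of Lemma~\ref{lemma:Kintrinsictorsion} shows that the first-order motion of $\Theta_0$ under $EH$ lies in the copy of $EH$ generated by $\frac12\tilde\alpha_1-\alpha_4+i\alpha_5$ (and the quadratic correction lies in the image of $S^2(EH)$, not $\Lambda^2(EH)$); that single $EH$ is precisely what Proposition~\ref{prop:metricdeterminescomplement} already uses up to fix the qcm reduction, so no further freedom remains. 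In the paper the vanishing of the $\Lambda^2(EH)$ part is instead a consequence of the Bianchi identity: one shows $[\Omega_{qc}^{2,0}]_{\lie{sp}(1)}\wedge\eta=\partial(\chi_V)\hook\Theta_0$, the left-hand side is constrained to the $S^2(EH)$-isotypic part by the curvature restrictions on the qc connection, and the map $v\mapsto\partial(v)\hook\Theta_0$ of \eqref{eqn:TotimesEHskew} is injective.

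The curvature analysis also has a gap: the paper's essential device is to observe that $\omega_{qc}=\omega-\chi_V-\chi_W$ is a $K$-connection with torsion exactly $\Theta_0$ and vanishing trace condition, hence \emph{is} the qc connection, so that Theorem~\ref{thm:hegelianconnection} and Lemma~\ref{lemma:Kcurvature} give $\Omega_{qc}\in\tilde R_1+\tilde R_2+\tilde R_3+R_4$; the constraints on $\Omega=[\Omega_{qc}]_{\lie{sp}(n)+\lie{sp}(1)}$ and the maps $f_i$ are then read off from $\Omega_{qc}=\Omega-D(\chi_V+\chi_W)$ together with Lemma~\ref{lemma:projections}. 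You instead propose to apply Lemma~\ref{lemma:Kcurvature} directly to the Bianchi identity of the qcm connection, asserting that $D\Theta_0=\Theta\hook\Theta_0$ lies in $s(T^*\otimes\widetilde{EH})$ ``by the argument of Proposition~\ref{prop:Bianchi}''. That argument applies to a connection with torsion equal to $\Theta_0$, where $D\Theta_0=\langle\nabla\Theta_0,\cdot\rangle$ with $\nabla\Theta_0$ valued in $T^*\otimes\widetilde{EH}$; for the qcm connection $\nabla\Theta_0=0$ and $D\Theta_0=\Theta_5\hook\Theta_0$, which lands in $\Lambda^{2,1}\otimes W+\Lambda^{1,2}\otimes W$ and is not contained in $s(T^*\otimes\widetilde{EH})$ (an element $s(v\otimes\tilde g)$ with $\tilde g\in\widetilde{EH}$ carries a linked $\Lambda^{3,0}\otimes V$ or $\Lambda^{2,1}\otimes V$ component). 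On top of this, $\delta(\Omega)=D\Theta$ also contains $D\Theta_5$, which involves $\nabla\chi_V$, $\nabla\chi_W$ and quadratic terms, so Lemma~\ref{lemma:Kcurvature} cannot be invoked for $\Omega$ without first isolating these contributions --- which is exactly what subtracting $\chi_V+\chi_W$ from the connection accomplishes. As written, the proposal does not establish the stated curvature constraints or the properties of $f_1,\dotsc,f_4$.
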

\begin{proof}
Consider an $\Sp(n)\Sp(1)$ connection $\omega$ with torsion $\Theta_0+\Theta_5$, where 
\[\Theta_5=\partial( \chi_V)+\partial(\chi_W), \quad \chi_V\in V^*\otimes EH, \quad \chi_W\in W^*\otimes EH.\]
Then 
\[\omega_{qc}=\omega-\chi_V-\chi_W\]
is a $K$-connection with curvature
\[\Omega_{qc}=\Omega - D(\chi_V+\chi_W);\]
in particular,  $\tr\Omega_C$  is zero, so $\omega_{qc}$ is the qc connection and, in the notation of Lemma~\ref{lemma:Kcurvature},
\[\Omega_{qc}\in \tilde R_1+\tilde R_2+\tilde R_3+R_4.\]
In particular
\begin{equation*}
[\Omega_{qc}^{2,0}]_{\lie{sp}(1)} \in S^2ES^2H+\Lambda^2_0E+\R, \quad [\Omega_{qc}^{1,1}]_{\lie{sp}(1)}\in ES^3H+EH.
\end{equation*}
Since $\eta$ and $\Theta_0$ are parallel under $\omega$, we compute
\[D\eta = \Theta_0\hook\eta=\Theta_0, \quad \Omega\wedge\eta = D\Theta_0= \Theta_5\hook\Theta_0.\]
Now
\[(\Theta_5\hook \Theta_0)^{2,1}= \partial (\chi_V)\hook\Theta_0, \quad (\Theta_5\hook \Theta_0)^{1,2}= \partial (\chi_W)\hook\Theta_0.\]
Notice that applying this argument to the $(0,3)$ part we obtain no information, as both $ [\Omega_{qc}^{0,2}]_{\lie{sp}(1)}\wedge\eta$ and  $(\Theta_5\hook\Theta_0)^{0,3}$ are zero.

Using the fact that  $\Omega = [\Omega_{qc}]_{\lie{sp}(n)+\lie{sp}(1)}$, we deduce
\begin{equation}
\label{eqn:chiDetermineOmega}
[\Omega_{qc}^{2,0}]_{\lie{sp}(1)} \wedge \eta = \partial( \chi_V)\hook\Theta_0, \quad [\Omega_{qc}^{1,1}]_{\lie{sp}(1)} \wedge \eta  = \partial (\chi_W)\hook\Theta_0.
\end{equation}
It is easy to verify that the map
\begin{equation}
\label{eqn:TotimesEHskew}
T^*\otimes EH\to \Lambda^{2,1}\otimes W, \quad v\to \partial (v)\hook\Theta_0
\end{equation}
is injective; thus, \eqref{eqn:chiDetermineOmega} implies that $\chi_V$ is in $S^2(EH)$. 
By Lemma~\ref{lemma:projections}, $\chi_V$ and $\chi_W$ determine part of the curvature, and the dependence can be expressed by  linear maps $f_1$, $f_2$ as in the statement. Notice that $f_1$ and $f_2$ are necessarily injective because so is the map \eqref{eqn:TotimesEHskew}.

We can now write
\[D\chi_V \in T^*\wedge S^2(EH) +\Theta\hook \chi_V, \quad D\chi_W\in (\Lambda^{1,1}+\Lambda^{0,2})\otimes EH+\Theta\hook\chi_W,\]
where
\[\Theta\hook\chi_V=\partial(\chi_V+\chi_W)\hook\chi_V, \quad \Theta\hook\chi_W=\Theta_0\hook\chi_W;\]
therefore,
\begin{gather*}
\Omega^{2,0}_{qc}=\Omega^{2,0}-	\Theta_0\hook\chi_W-(D\chi_V)^{2,0}, \quad \Omega^{1,1}_{qc}=\Omega^{1,1}-(D\chi_V)^{1,1}-(D\chi_W)^{1,1},\\
\Omega^{0,2}_{qc}=\Omega^{0,2}-\partial(\chi_W)\hook \chi_V-(D\chi_W)^{0,2}. 
\end{gather*}
In consequence,
\begin{gather*}
\Omega^{2,0}\in \tilde R_1, \quad \Omega^{1,1}-\Theta_0\hook\chi_W-(D\chi_V)^{2,0}\in\tilde R_2,\\
\Omega^{0,2}-(D\chi_V)^{1,1}-(D\chi_W)^{1,1}\in \tilde R_3.
%  -\partial(\chi_W)\hook \chi_V-(D\chi_W)^{0,2}\in R_4=ES^3H.
\end{gather*}
Taking $p_2^{EH}$ we see that $\Omega^{1,1}$ is determined by $\Theta_0\hook\chi_W+(D\chi_V)^{2,0}$, where the first summand lies in $S^2H\otimes EH$. It also follows from  Lemma~\ref{lemma:projections} that 
 $(D\chi_V)^{2,0}$ alone determines all of $\Omega^{1,1}$ when $n>1$, and its component $S^3EH+EH$ when $n=1$.

Similarly, the fact that $p_3^{EH}$ is injective shows that $(D\chi_V+D\chi_W)^{1,1}$ determines $\Omega^{0,2}$.
\end{proof}
We shall refer to the connection of Theorem~\ref{thm:qcmintrinsic} as the qcm connection; it can be regarded as a canonical object of qcm geometry in the same way that the qc connection is canonical in qc geometry,  since the two are related by  the obvious projection from  $\lie{k}$ to $\lie{sp}(n)+\lie{sp}(1)$. The qcm connection is also natural in that it coincides with the natural connection that appears in the three fundamental examples of Section~\ref{sec:example}. 

\begin{remark}
The curvature of the qcm connection has the remarkable property that the torsion (together with its covariant derivative) determines all of its curvature except for $S^4E\subset\tilde R_1$. This component of the curvature is the same whether one considers the qc, qcm or Biquard connection (see Corollary~\ref{cor:Omegabiq}), and can be identified with the qc conformal curvature tensor constructed in \cite{IvanovVassilev}.
\end{remark}

\begin{remark}
We could have considered the curvature of the qc connection instead, but this would have changed little, because its curvature, as shown in the course of the proof,  can be identified  with the curvature of the qcm connection by means of a  projection to $\lie{sp}(n)\lie{sp}(1)$. The fact that the modules appearing in Theorem~\ref{thm:qcmintrinsic}  are smaller than those of Proposition~\ref{prop:Bianchi} is a consequence of the fact that we are now working with a qcm structure rather than an arbitrary $\Sp(n)\Sp(1)$ reduction of a qc structure.
\end{remark}

Recall that the (horizontal) Ricci tensor is defined as
\[\ric(X,Y)=\sum_{a=1}^{4n} \langle e^a, \Omega(e_a,X)Y\rangle, \quad X,Y\in V;\]
notice that the contraction is only taken on indices along $V$. Its trace is called the qc scalar curvature. The  Ricci tensor arises from an equivariant map $\Lambda^{2,0}\otimes(\lie{sp}(n)+\lie{sp}(1))\to V^*\otimes V^*$. Restricting to $\tilde R_1$, it follows by Schur's lemma that the image is contained in  $S^2(V^*)$: so, the Ricci tensor is symmetric, as proved in  \cite{Biquard} for the Biquard connection. 
\begin{remark}
With a bit of work one can show that the image is exactly $S^2(V^*)$, so the Ricci of the qcm connection can be identified with $\chi_V$. In particular, the qc scalar curvature is the scalar part of $\chi_V$. The remaining part of the intrinsic torsion, $\chi_W$, is easily seen to be the obstruction to the complement distribution being integrable.
\end{remark}

A qc manifold is called  \dfn{qc-Einstein} if the Biquard connection has zero traceless Ricci. This is a very strong condition, due to results of \cite{IvanovVassilev:ClosedFundamentalForm}.  We can now prove an analogous result which  uses the qcm connection instead.
\begin{corollary}
\label{cor:fourformclosed}
On a qcm manifold of dimension greater than seven, the following conditions are equivalent:
\begin{enumerate}
\item the four-form $\sum_s\omega_s^2$ is closed;
\item $\chi_V$ is a constant scalar and $\chi_W=0$;
\item the qcm connection has curvature in $S^4E+2\R\subset R_1+ R_3$; 
\item the horizontal traceless Ricci of the qcm connection is zero;
\item the traceless part of $\chi_V$ is zero.
\end{enumerate}
\end{corollary}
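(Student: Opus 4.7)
The plan is to cover $(4) \Leftrightarrow (5)$ trivially, then $(2) \Leftrightarrow (3)$ via Theorem~\ref{thm:qcmintrinsic}, and finally $(1) \Leftrightarrow (2)$ via the closure formula for the $\Sp(n)\Sp(1)$-invariant four-form $\sum_s \omega_s^2$. For $(4) \Leftrightarrow (5)$, the remark preceding the statement identifies the horizontal Ricci tensor of the qcm connection with $\chi_V\in S^2(EH)=S^2ES^2H+\Lambda_0^2E+\R$; the traceless Ricci then corresponds exactly to the $S^2ES^2H+\Lambda_0^2E$ summand, so the two conditions coincide.

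For $(2)\Rightarrow(3)$, I would substitute $\chi_V=c\Psi$ with $c\in\R$ constant and $\chi_W=0$ (where $\Psi$ is the canonical generator of $\R\subset S^2(EH)$) into Theorem~\ref{thm:qcmintrinsic}: parallelness of $\Psi$ gives $D\chi_V=0$, whence $\Omega^{2,0}\in f_1(\R)+S^4E\subset S^4E+\R$, $\Omega^{1,1}=0$, and $\Omega^{0,2}=0$, so the curvature lies in $S^4E+2\R$. Conversely, given $(3)$, bidegree forces $\Omega^{1,1}=0$; the injectivity of $f_1$ together with $\Omega^{2,0}\in S^4E+\R$ yields $\chi_V\in\R$, and the injectivity of $f_2$ together with $\Omega^{1,1}-f_2(\chi_W)\in S^3EH$ yields $\chi_W=0$. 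Writing $\chi_V=c\Psi$, the relations $0=\Omega^{1,1}=f_3((D\chi_V)^{2,0})$ and $\Omega^{0,2}=f_4((D\chi_V)^{1,1})\in\R$, combined with the injectivity of the wedge maps $V^*\to\Lambda^{2,0}\otimes EH$ and $W^*\to\Lambda^{1,1}\otimes EH$ given by $\xi\mapsto\xi\wedge\Psi$, then give $(dc)^{1,0}=0=(dc)^{0,1}$, so $c$ is constant.

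For $(1)\Leftrightarrow(2)$, the key point is that $\sum_s \omega_s^2$ is $\Sp(n)\Sp(1)$-invariant and hence parallel under the qcm connection, so $d\bigl(\sum_s\omega_s^2\bigr) = T\hook\bigl(\sum_s\omega_s^2\bigr)$ with $T=\Theta_0+\partial\chi_V+\partial\chi_W$. The $\Theta_0$-contribution vanishes because $\Theta_0$ has vector-valued output in $W$ while $\sum_s\omega_s^2\in\Lambda^{4,0}$; the remaining contributions lie in $\Lambda^{4,1}$ (from $\chi_V$, since $\partial\chi_V\in\Lambda^{1,1}\otimes V$) and $\Lambda^{3,2}$ (from $\chi_W$, since $\partial\chi_W\in\Lambda^{0,2}\otimes V$), so closedness is equivalent to both contributions vanishing separately. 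A highest-weight calculation in the style of Lemmas~\ref{lemma:formulae} and~\ref{lemma:q} identifies the kernel of the equivariant map $\chi_V\mapsto\partial\chi_V\hook\sum_s\omega_s^2$ from $S^2(EH)$ to $\Lambda^{4,1}$ as the trivial summand $\R$, and shows that $\chi_W\mapsto\partial\chi_W\hook\sum_s\omega_s^2$ is injective. Hence $(1)$ forces $\chi_V\in\R$ and $\chi_W=0$, after which the final step of the preceding paragraph (the wedge-with-$\Psi$ injectivity applied to $D\chi_V$) yields $(2)$. Conversely, $(2)\Rightarrow(1)$ reduces to verifying $\partial\Psi\hook\sum_s\omega_s^2=0$, which can be checked directly or inferred from the example of the 3-Sasakian sphere in Section~\ref{sec:example}, where $c\neq 0$ and the four-form is closed.

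The principal obstacle is the Schur-type identification of $\ker\bigl(\chi_V\mapsto\partial\chi_V\hook\sum_s\omega_s^2\bigr) = \R$: a priori this kernel could contain other irreducible summands of $S^2(EH)$, and excluding them requires decomposing $\Lambda^{4,1}=\Lambda^4V^*\otimes W^*$ into $\Sp(n)\Sp(1)$-isotypes and verifying via explicit highest-weight computation that every non-trivial component of $S^2ES^2H+\Lambda_0^2E$ maps non-trivially. A secondary subtlety, used in both $(3)\Rightarrow(2)$ and the completion of $(1)\Rightarrow(2)$, is the injectivity of the wedge-with-$\Psi$ maps, which follows from the non-degeneracy of the canonical pairing $V^*\otimes EH\to\R$.
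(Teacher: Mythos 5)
Your skeleton --- (4)$\Leftrightarrow$(5) via the Ricci identification, closedness of $\sum_s\omega_s^2$ via parallelism and the two kernel computations (trivial summand for $\chi_V$, injectivity for $\chi_W$), plus a separate constancy argument --- matches the paper's, and those parts are essentially the paper's proof. The first genuine problem is the claim that constancy of the scalar $\chi_V$ gives $D\chi_V=0$. Here $D$ is the exterior covariant derivative, so by \eqref{eqn:Dalphanablaalpha} $D\chi_V$ contains the torsion term $\Theta\hook\chi_V=\partial(\chi_V+\chi_W)\hook\chi_V$, which is quadratic in the torsion and does not vanish; consequently $\Omega^{0,2}=f_4\bigl((D\chi_V)^{1,1}\bigr)$ is in general a nonzero multiple of the invariant element of $\tilde R_3$. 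On the sphere one has $\chi_V$ a nonzero constant scalar and $\chi_W=0$, yet $\Omega^{0,2}_{qcm}=2w_s\hook w^{123}\otimes\omega_s\neq0$. So your intermediate conclusion $\Omega^{0,2}=0$ is false. The implication (2)$\Rightarrow$(3) survives only because $(\Theta\hook\chi_V)^{1,1}=c^2(\partial\Psi\hook\Psi)^{1,1}$ lies in the trivial isotype when $\chi_V$ is scalar, so $\Omega^{0,2}\in\R\subset R_3$ --- but that is not the argument you gave.

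The second, more serious gap is in the constancy step. You deduce $(dc)^{1,0}=0$ from $0=\Omega^{1,1}=f_3\bigl((D\chi_V)^{2,0}\bigr)$; this needs $f_3$ to be injective on $\Lambda^{1,0}\wedge\R$, which Theorem~\ref{thm:qcmintrinsic} does not assert (only $f_1,f_2$ are stated to be injective), and in the completion of (1)$\Rightarrow$(2) you invoke the same step even though under hypothesis (1) you do not yet know $\Omega^{1,1}=0$ or $\Omega^{0,2}\in\R$: obtaining those would require the very constancy you are trying to prove, so that passage is circular. The paper's mechanism is different and needs no curvature hypothesis: the $\Lambda^{2,0}\otimes EH$-component of $\Omega_{qc}$ is $-\Theta_0\hook\chi_W-(D\chi_V)^{2,0}$ and must lie in $\im p_2^{EH}$, while Lemma~\ref{lemma:projections} gives $\im p_2^{EH}\cap\bigl(\lie{sp}(1)\otimes EH+\Lambda^{1,0}\wedge\R\bigr)=0$ for $n>1$; since $(D\chi_V)^{2,0}=(dc)^{1,0}\wedge\Psi\in\Lambda^{1,0}\wedge\R$, both $\chi_W$ and $(dc)^{1,0}$ vanish. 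Likewise $(dc)^{0,1}\wedge\Psi$ is killed because $\tilde R_3$ contains no $S^2H$-isotype (Lemma~\ref{lemma:Kcurvature}), not because of injectivity of $f_4$. If you reroute the constancy step through these two structural facts --- which is precisely what the paper flags the last part of Lemma~\ref{lemma:projections} for --- the rest of your proposal closes up.
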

\begin{proof}
The fundamental form $\sum_s\omega_s^2$ is parallel under $\omega_{qcm}$; therefore, it is closed if $\Theta\hook \sum \omega_s^2=0$, or 
\[\partial(\chi_V)\hook \sum_s\omega_s^2=0, \quad\partial(\chi_W)\hook \sum_s\omega_s^2=0.\]
The second equation is equivalent to  $\chi_W=0$, because interior product with the fundamental form is an injection of $V$ into $\Lambda^{3,0}$. The first equation means that $\chi_V$ is in the kernel of a map $S^2(EH)\to \Lambda^{4,1}$ which is easily checked to have kernel $\R$ (since we assume $n>1$).
Thus, (1) is equivalent to (2). 

The fact that (2) implies (3) follows from Theorem~\ref{thm:qcmintrinsic}; on the other hand, (3) obviously implies (4).

The fact that (4) is equivalent to (5) follows from 
\[\Omega^{2,0}-f_1(\chi_V) \in S^4E,\]
and the fact that the Ricci contraction from $\tilde R_1$ to $S^2(EH)$ has kernel $S^4E$, as noted in a remark above.

Now assume (5) holds; then $(D\chi_V)^{2,0}\in V^*\wedge\R$. By Lemma~\ref{lemma:projections},
\[ p_2^{EH}(\Omega^{1,1}-\Theta_0\hook\chi_W-(D\chi_V)^{2,0})=0.\]
This implies that both $\chi_W$ and $(D\chi_V)^{2,0}$ are zero.

Similarly, $(D\chi_V)^{1,1}$ lies in as module isomorphic to $S^2H$, so 
\[\Omega^{0,2}-(D\chi_V)^{1,1}-(D\chi_W)^{1,1}\in \tilde R_3\]
implies that $(D\chi_V)^{1,1}$ is zero. Summing up, $\nabla\chi_V$ is zero, i.e. $\chi_V$ is constant. This establishes the equivalence of (2) and (5), completing the proof.
\end{proof}
\begin{remark}
One can rephrase Corollary~\ref{cor:fourformclosed} in terms of the Biquard connection, for condition (5) is equivalent to $T_\xi=0$ (Corollary~\ref{cor:TxiisChiV}), and condition (4) is equivalent to qc-Einstein (Corollary \ref{cor:einsteiniseinstein}). This version of the  statement  was proved in \cite{IvanovVassilev:ClosedFundamentalForm}.
\end{remark}

\begin{remark}
There are two points in the proof where the assumption on the dimension is used. First, the  map $S^2(EH)\to \Lambda^{4,1}$ whose kernel contains $\chi_V$ is zero when $n=1$, so (1) does not imply (2) for $n=1$. In fact, this implication has a known counterexample \cite{ContiFernandezSantisteban:qc}.

Secondly, the fact that $\chi_V$ is a scalar does not apparently force $\chi_W$ to vanish for $n=1$, because of the form that Lemma~\ref{lemma:projections} takes in this instance. A very recent result \cite{IvanovMinchevVassilev:qcEinstein} shows however that even when the dimension is seven, $\chi_V$ is a scalar only when the vertical distribution is integrable, hence $\chi_W=0$.
\end{remark}

We conclude this section by noting that the natural qcm structure in each example of Section~\ref{sec:example} satisfies the conditions of Corollary~\ref{cor:fourformclosed}, with $\chi_V$ a positive, negative or zero constant. In these examples, the $S^4E$ component of the qcm curvature is also zero. More generally, 3-Sasakian manifolds have a natural qc-Einstein structure; the converse also holds up to local homothety (see \cite{IvanovVassilev:ClosedFundamentalForm}).

\section{The Biquard connection}
\label{sec:biquard}
In this section we compare our results with those of \cite{Biquard,Duchemin}. In fact, we recover the existence of the Biquard and Duchemin connection, and express them in terms of the qcm connection, showing  that all three connection  exist in all dimensions. We show that the component the torsion of the Biquard connection usually denoted by $T_\xi$  can be identified with a traceless symmetric endomorphism of $\mathcal{D}$, which can be identified with the traceless Ricci of either the Biquard or the qcm connection.

We shall  decompose any $\eta\in\Lambda^2T^*\otimes T$ as
\[[\eta]_V+[\eta]_W, \quad [\eta]_V\in \Lambda^2T^*\otimes V, \quad [\eta]_W=\Lambda^2T^*\otimes W.\]

Recall the following:

\begin{theorem}[Biquard  \cite{Biquard}]
If $n>1$, given a  qc structure with a fixed compatible metric on the distribution $\mathcal{D}$, there is a unique complement $W_{B}$ and a unique connection which:
\begin{enumerate}
\item preserves both $\mathcal{D}$ and $W_{B}$, as well as the $\Sp(n)\Sp(1)$-structure on $\mathcal{D}$, and acts on $W_B$ as on the subbundle of $\End(V)$ determined by the almost complex structures;
\item satisfies the torsion conditions 
\begin{equation}
\label{eqn:Biquard}
[\Theta^{2,0}]_V=0,\quad [\Theta^{1,1}]_V\in \partial (W^*\otimes (\lie{sp}(n)\lie{sp}(1))^\perp),
 \end{equation}
\end{enumerate}
where the  orthogonal complement is taken in $\lie{gl}(V)$.
\end{theorem}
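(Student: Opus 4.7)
The plan is to reduce the statement to the canonical qcm structure. By Proposition~\ref{prop:metricdeterminescomplement}, the qc structure and the metric already determine a canonical complement $W_{qcm}$ and $\Sp(n)\Sp(1)$-reduction $P$, on which the qcm connection of Theorem~\ref{thm:qcmintrinsic} has torsion $\Theta_0+\partial(\chi_V)+\partial(\chi_W)$ with $\chi_V\in S^2(EH)\subset V^*\otimes EH$ and $\chi_W\in W^*\otimes EH$. I would claim that $W_B=W_{qcm}$ and that the Biquard connection is the unique modification $\omega_{qcm}+A$ for some tensorial $A\in T^*\otimes(\lie{sp}(n)+\lie{sp}(1))$ satisfying the two torsion conditions in (2).

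For existence (with $W_B$ fixed to be $W_{qcm}$), I would look for the correction $A$ making $\omega_{qcm}+A$ fit (2). Splitting $A=A_V+A_W$ along $T^*=V^*\oplus W^*$, only $A_V$ contributes to the $(2,0)_V$-component of $\partial A$, and the induced map $V^*\otimes(\lie{sp}(n)+\lie{sp}(1))\to\Lambda^{2,0}\otimes V$ is the Koszul map of the Riemannian reduction $\Sp(n)\Sp(1)\subset\SO(V)$, hence injective. Since the qcm torsion already has vanishing $(2,0)_V$-component, condition (i) forces $A_V=0$. Under the standard identification $\Lambda^{1,1}\otimes V\cong W^*\otimes\gl(V)$, $\partial(\chi_V)$ corresponds to the image of $\chi_V$ in $W^*\otimes\gl(V)$ (via $EH\subset\Hom(W,V)$), while $A_W\mapsto [\partial A_W^{1,1}]_V$ identifies with the inclusion $W^*\otimes(\lie{sp}(n)+\lie{sp}(1))\hookrightarrow W^*\otimes\gl(V)$. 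Condition (ii), demanding that the sum lie in $W^*\otimes(\lie{sp}(n)\lie{sp}(1))^\perp$, then uniquely determines $A_W$ as the negative of the $W^*\otimes\lie{sp}(n)\lie{sp}(1)$-projection of this element.

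For uniqueness of the complement, I would show that any pair $(W',\nabla)$ satisfying (1) and (2) gives rise to an $\Sp(n)\Sp(1)$-reduction of the $B$-structure whose $G$-intrinsic torsion equals $\Theta_0^G$; by Proposition~\ref{prop:metricdeterminescomplement} the complement must then equal $W_{qcm}$, and $\nabla$ is forced to be the connection constructed in the existence step. To produce this identification one rewrites $\Theta_\nabla-\Theta_0$ and checks that all its components lie in $\im\partial_G$: condition (i) kills $[\Theta^{2,0}]_V$, condition (ii) places $[\Theta^{1,1}]_V$ in $\partial_G(W^*\otimes(\lie{sp}(n)\lie{sp}(1))^\perp)$, and the remaining $\Lambda^{0,2}\otimes T$, $\Lambda^{1,1}\otimes W$ and the part of $\Lambda^{2,0}\otimes W$ outside the line spanned by $\Theta_0$ are absorbed by $\partial_G$ via the analysis leading to Lemma~\ref{lemma:projectiontorsion}, Proposition~\ref{prop:kernelr} and Corollary~\ref{cor:kerpartialB}.

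I expect the last identification to be the main obstacle: confirming that Biquard's torsion conditions, which a priori only constrain part of the torsion and refer to a potentially different complement, are already strong enough to pin down the underlying $\Sp(n)\Sp(1)$-reduction as the canonical qcm one. Once that is in place, everything reduces to the linear-algebraic argument with Koszul and $\partial$-type maps outlined above, and the restriction $n>1$ enters only through the corresponding restriction in Proposition~\ref{prop:metricdeterminescomplement} and in Theorem~\ref{thm:qcmintrinsic}.
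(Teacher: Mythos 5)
Your overall architecture is the same as the paper's: fix the canonical qcm reduction of Proposition~\ref{prop:metricdeterminescomplement}, build the Biquard connection as $\omega_{qcm}$ plus a correction in $T^*\otimes(\lie{sp}(n)+\lie{sp}(1))$, and prove uniqueness of the complement by showing that any pair $(W',\nabla)$ satisfying \eqref{eqn:Biquard} forces the underlying $\Sp(n)\Sp(1)$-reduction to be the canonical one. The existence half is essentially the paper's argument: since $[\Theta_{qcm}^{2,0}]_V=0$ and the alternation $V^*\otimes\so(V)\to\Lambda^2V^*\otimes V$ is injective, the first condition kills the $V^*$-part of the correction, and the second condition determines the $W^*$-part as minus the $\lie{sp}(n)+\lie{sp}(1)$-projection of $\partial(\chi_V)$ (this is the map $T_B$ of the paper).

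The gap is in the uniqueness-of-complement step, which you correctly flag as the main obstacle but then dispatch with an absorption argument that does not work. First, the components $\Theta_1\in\partial_1(W_1)$, $\Theta_2\in\partial_2(W_2)\subset\Lambda^{1,1}\otimes W$, $\Theta_3\in\partial(W_3)$ and $\Theta_4\in\partial(V^*\otimes\R)$ of \eqref{eqn:SpnSp1decomp} are precisely the parts of the torsion \emph{not} absorbed by $\im\partial_G$; in particular your claim that $\Lambda^{1,1}\otimes W$ is absorbed is false, and neither Biquard condition constrains $[\Theta^{1,1}]_W$ at all --- the vanishing of $\Theta_2$ can only be deduced from $\Theta_1=0$ via integrability and Theorem~\ref{thm:reductiontheorem}, and the vanishing of $\Theta_3$ comes from Theorem~\ref{thm:Kintrinsictorsion} once the frames are known to lie in the canonical $K$-reduction, not from condition \eqref{eqn:Biquard}. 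Second, $[\Theta^{2,0}]_V=0$ does not separately kill $\Theta_1$, the $\R$-part $\delta_V$ of $\Theta_4$, and the contribution $\partial(\eta_V)^{2,0}$ of $\im\partial_{\Sp(n)\Sp(1)}$: their images in $\Lambda^{2,0}\otimes V$ overlap, and the paper must compute that the kernel of
\[V^*\otimes(\lie{sp}(n)+\lie{sp}(1))\oplus W_1\oplus V^*\otimes\R\to\Lambda^{2,0}\otimes V\]
is $2EH+ES^3H$, projecting injectively onto $W_1$, and then invoke Theorem~\ref{thm:reductiontheorem} to place $\eta_1$ in the $ES^3H$ containing $2\beta_2+3\beta_1$, which meets that kernel trivially; only then do $\Theta_1=0$ and $\delta_V=0$ follow. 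Also note that $(\lie{sp}(n)\lie{sp}(1))^\perp\not\subset\lie{g}$, so ``$\partial_G(W^*\otimes(\lie{sp}(n)\lie{sp}(1))^\perp)$'' is not a subspace of $\im\partial_G$ and condition (ii) cannot be read as placing $[\Theta^{1,1}]_V$ inside $\im\partial_G$; in the paper's proof condition (ii) plays no role in identifying the complement --- it only serves to pin down the connection afterwards.
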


In seven dimensions, we have the following similar result:
\begin{theorem}[Duchemin \cite{Duchemin}]
If $n=1$,  given a qc structure with a fixed compatible metric on the distribution $\mathcal{D}$, then there is a unique complement $W_{D}$ and a unique connection which:
\begin{enumerate}
\item preserves both $\mathcal{D}$ and $W_{D}$, and the metrics on them;
\item satisfies the torsion conditions
\begin{equation}
 \label{eqn:Duchemin}
[\Theta^{2,0}]_V=0, \quad [\Theta^{0,2}]_W=0, \quad \Theta^{1,1}\in ES^5H+W^*\otimes S^2V.
\end{equation}
The $ES^5H$ component is zero if and only if the qc structure is integrable.
\end{enumerate}
\end{theorem}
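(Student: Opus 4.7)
My plan is to derive Duchemin's theorem for $n=1$ as a parallel to Theorem~\ref{thm:qcmintrinsic}, reusing the canonical reductions already established. The first step is to pin down $W_D$ via an analogue of Proposition~\ref{prop:metricdeterminescomplement} that does not rely on integrability. Corollary~\ref{cor:Kreduction} produces a canonical $K$\dash reduction whose $B$-intrinsic torsion lies in $\Theta_0^B + ES^5H$; the computation of Lemma~\ref{lemma:linearized} is purely $\Sp(1)\Sp(1)$-equivariant and shows that the $EH\subset\Hom(W,V)$-orbit through $\Theta_0^G$ modulo $\im\partial_G$ still projects injectively onto a complement of the $\Sp(1)\Sp(1)$-reductions arising from metric choices. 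This singles out a unique $\Sp(1)\Sp(1)$-reduction $P$, hence a unique complement $W_D$ compatible with the chosen metric, independently of whether the qc structure is integrable.

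Second, on $P$ I would search for a connection whose form takes values in $\lie{so}(V)+\lie{so}(W)\subset \lie{sp}(1)\lie{sp}(1)+\lie{sp}(1)$, i.e.\ preserves both metrics rather than merely the full $\Sp(1)\Sp(1)$-structure. Starting from any such reference connection (the qcm connection of Theorem~\ref{thm:qcmintrinsic} works in the integrable case, and the construction extends to general $P$), the unknown Duchemin connection differs by a tensorial 1-form $A$ valued in $\lie{so}(V)+\lie{so}(W)$, and the torsion changes by $\partial A$. Thus the theorem reduces to showing that the affine-linear system
\[
[\Theta^{2,0}]_V=0,\quad [\Theta^{0,2}]_W=0,\quad \Theta^{1,1}\in ES^5H+W^*\otimes S^2V
\]
has a unique solution $A\in T^*\otimes(\lie{so}(V)+\lie{so}(W))$; equivalently, $\partial$ must send $T^*\otimes(\lie{so}(V)+\lie{so}(W))$ bijectively onto a complement of the Duchemin target inside $\Lambda^2T^*\otimes T$.

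The main obstacle is this linear algebra computation. In the seven-dimensional case accidental isomorphisms between $\Sp(1)\Sp(1)$-modules create the risk of unwanted kernel or cokernel contributions, which is precisely the phenomenon responsible for the exceptional $ES^5H$ in Lemma~\ref{lemma:twomaps}. I would carry out the check irreducible component by irreducible component, decomposing $T^*\otimes(\lie{so}(V)+\lie{so}(W))$ and $\Lambda^2T^*\otimes T$ under $\Sp(1)\Sp(1)$, matching multiplicities by Schur's lemma and dimension counts, and using the explicit highest-weight-vector formulae of Lemma~\ref{lemma:formulae} to certify non-degeneracy of each diagonal block; the residual $ES^5H$ that survives in the image is exactly the component permitted by the torsion condition.

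Finally, the claim that the $ES^5H$-part vanishes iff the qc structure is integrable in the sense of Duchemin follows from Corollary~\ref{cor:integrableisintegrable}: the $ES^5H$ component of the Duchemin torsion projects isomorphically onto the $ES^5H\subset\Theta_2$-part of the $B$-intrinsic torsion, and by construction the latter is the obstruction to Duchemin-integrability.
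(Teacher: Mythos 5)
Your overall architecture (normalize the complement via the intrinsic\dash torsion machinery, then correct a reference connection by a tensorial $1$-form valued in $\so(V)+\so(W)$) matches the paper's, which realizes the Duchemin connection explicitly as $\omega_D=\omega_{qcm}+T_B(\chi_V)+T_D(\chi_V)$ in Theorem~\ref{thm:biquard}. But your reduction of the second step to linear algebra contains a genuine error. You assert that the problem is ``equivalently'' that $\partial$ sends $T^*\otimes(\so(V)+\so(W))$ bijectively onto a complement of the Duchemin target in $\Lambda^2T^*\otimes T$. For $n=1$ this is dimensionally impossible: $\dim T^*\otimes(\so(V)+\so(W))=7\cdot 9=63$, while the linear part of the target, $\Lambda^{2,0}\otimes W+ES^5H+W^*\otimes S^2V+\Lambda^{0,2}\otimes V$, has dimension $18+12+30+12=72$, and $63+72=135<147=\dim\Lambda^2T^*\otimes T$. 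The sum misses a $12$-dimensional $ES^3H+EH$ inside $\Lambda^{1,1}\otimes W\cong ES^5H+2ES^3H+2EH$, since $\partial(V^*\otimes\so(W))\cong V^*\otimes\Lambda^2W\cong ES^3H+EH$ accounts for only one copy of each. (The intersection of $\im\partial|_{T^*\otimes(\so(V)+\so(W))}$ with the target \emph{is} trivial, so your uniqueness half is fine.) Consequently existence is not a pure linear\dash algebra fact: it holds only if the reference torsion has vanishing component in that residual $ES^3H+EH$, and this is exactly the condition that singles out the canonical complement --- the $ES^3H$ is killed by the $K$-reduction of Corollary~\ref{cor:Kreduction} and the $EH$ by the normalization of Proposition~\ref{prop:metricdeterminescomplement}. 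As written, your second paragraph would ``prove'' existence of the Duchemin connection over an arbitrary complement, contradicting the uniqueness of $W_D$. The repair is to split the claim into trivial intersection (uniqueness) plus membership of the reference torsion in the $135$-dimensional sum (existence), the latter being supplied by the form $\Theta_0+\partial(\chi_V)+\partial(\chi_W)+(ES^5H\text{-part})$ of the torsion on the canonical reduction; this is how the paper proceeds, and it also lets it bypass the full irreducible\dash component computation by writing $T_B$ and $T_D$ only on the few modules where they are non-zero.

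A second, smaller gap: you invoke ``the construction extends to general $P$'' for the reference connection in the non-integrable case, but Proposition~\ref{prop:metricdeterminescomplement} and Theorem~\ref{thm:qcmintrinsic} are proved only for integrable structures (a qcm structure is integrable by definition), so for $n=1$ you must rerun the Bianchi\dash identity argument of Theorem~\ref{thm:qcmintrinsic} carrying the extra $ES^5H$ term and check that it does not disturb the conclusion $\chi_V\in S^2(EH)$. The paper itself only recovers the integrable case and defers the rest to \cite{Duchemin}, so this step is genuinely missing rather than implicit. Your final paragraph, identifying the $ES^5H$ component of $\Theta_D$ with the obstruction of Corollary~\ref{cor:integrableisintegrable}, is correct.
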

We shall refer to these connections as the Biquard connection and the Duchemin connection. Notice that the Duchemin connection has holonomy contained in $\SO(4)\times \SO(3)$, or $\Sp(n)\Sp(1)\times\SO(W)$. 

We introduce two equivariant maps
 \[T_B\colon S^2(EH)\to W^*\otimes (\lie{sp}(n)+\lie{sp}(1)), \quad T_D\colon S^2(EH)\to W^*\otimes \so(W) ;\]
equivariance implies that $T_B$ is zero on $\Lambda^2_0E$ and $T_D$ is zero on  $\Lambda^2_0E+S^2ES^2H$: on the remaining components, we set
\begin{gather*}
T_D\bigl( e_a\otimes w^s\otimes e_a\hook\omega_s\bigr)=4w^s\otimes (w_s\hook w^{123}), \quad T_B\bigl( e_a\otimes w^s\otimes e_a\hook\omega_s\bigr)=2w^s\otimes\omega_s,\\
T_B(v_1h_2\otimes((w^2+iw^3)\otimes v_1h_1 + i w^1\otimes v_1h_2))= \frac12 (w^2+iw^3)\otimes v_1h_2\wedge v_1h_1.
\end{gather*}

We can recover the existence of the Biquard and Duchemin connection as a consequence of what we have proved so far; in particular, we show that our choice of complement coincides with $W_B$ and $W_D$. \begin{theorem}
\label{thm:biquard}
On a qcm structure:
\begin{itemize}
 \item  there is a unique connection $\omega_B$ whose torsion $\Theta_B$ satisfies \eqref{eqn:Biquard}; it is related to the qcm connection via
\[\omega_B=\omega_{qcm}+T_B(\chi_V),\]
and satisfies
\[\Theta_B^{2,0}=\Theta_0, \quad \Theta_B^{1,1} \in  S^2ES^2H+\Lambda^2_0E, \quad \Theta_B^{0,2}\in EH+ES^3H+\R;\]
\item there is a unique  $\Sp(n)\Sp(1)\times\SO(W)$-connection $\omega_D$  whose torsion $\Theta_D$ satisfies \eqref{eqn:Duchemin}, given by
\[\omega_D=\omega_{qcm}+T_B(\chi_V)+T_D(\chi_V);\]
moreover $\Theta_D=\Theta_B-[\Theta_B^{0,2}]_W$.
\end{itemize}

Given a qc structure and a metric on the horizontal distribution:
\begin{itemize}
 \item  there is a unique $\Sp(n)\Sp(1)$-structure compatible with structure and metric that admits a connection satisfying \eqref{eqn:Biquard};
 \item  there is a unique $\Sp(n)\Sp(1)$-structure compatible with structure and metric that admits an $\Sp(n)\Sp(1)\times\SO(W)$-connection satisfying \eqref{eqn:Duchemin}.
\end{itemize}
These two $\Sp(n)\Sp(1)$-structures coincide, they are a qcm structure $P$ in the sense of Definition~\ref{def:qcm}, and the natural complement 
\[P\times_{\Sp(n)\Sp(1)} W\subset P\times_{\Sp(n)\Sp(1)} T\]
  coincides with the complement $W_B$ (resp. $W_D$).
\end{theorem}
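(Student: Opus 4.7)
The plan is to construct $\omega_B$ and $\omega_D$ by correcting the qcm connection $\omega_{qcm}$ of Theorem~\ref{thm:qcmintrinsic}, then obtain uniqueness by analysing how the torsion responds both to a change of connection (with the reduction fixed) and to a change of $\Sp(n)\Sp(1)$-reduction.

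On a qcm structure, $\omega_{qcm}$ has torsion $\Theta_0+\partial(\chi_V)+\partial(\chi_W)$, with $\Theta_0\in\Lambda^{2,0}\otimes W$ and $\partial(\chi_V)+\partial(\chi_W)\in(\Lambda^{1,1}+\Lambda^{0,2})\otimes V$, so the condition $[\Theta^{2,0}]_V=0$ of \eqref{eqn:Biquard} is already satisfied. To secure the condition on $[\Theta^{1,1}]_V$, the natural move is to modify $\omega_{qcm}$ by a tensorial $1$-form in $W^*\otimes(\lie{sp}(n)+\lie{sp}(1))$, which preserves the $\Sp(n)\Sp(1)$-structure and affects only the $\Lambda^{1,1}\otimes V$ part of the torsion. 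I would verify that $T_B(\chi_V)$ is chosen exactly so that $\partial(\chi_V)+\partial(T_B(\chi_V))\in\partial(W^*\otimes(\lie{sp}(n)\lie{sp}(1))^\perp)$, a component-by-component check in the decomposition $S^2(EH)=S^2ES^2H+\Lambda^2_0E+\R$ using Lemma~\ref{lemma:formulae} and the conventions of Section~\ref{sec:representations}. Setting $\omega_B=\omega_{qcm}+T_B(\chi_V)$ then produces a connection satisfying \eqref{eqn:Biquard} whose torsion has the stated components $\Theta_B^{2,0}=\Theta_0$, $\Theta_B^{1,1}\in S^2ES^2H+\Lambda^2_0E$ and $\Theta_B^{0,2}\in EH+ES^3H+\R$.

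For uniqueness on a fixed qcm structure, two connections satisfying \eqref{eqn:Biquard} differ by a tensorial $1$-form $A\in T^*\otimes(\lie{sp}(n)+\lie{sp}(1))$, and the difference of their torsions is $\partial(A)\in\Lambda^2T^*\otimes V$. The equality $[\partial(A)^{2,0}]_V=0$ forces $A^{1,0}=0$ by injectivity of $\partial$ on $V^*\otimes(\lie{sp}(n)+\lie{sp}(1))$, and the constraint $[\partial(A)^{1,1}]_V\in\partial(W^*\otimes(\lie{sp}(n)\lie{sp}(1))^\perp)$ combined with the orthogonal projection forces $A^{0,1}=0$ as well. The Duchemin case follows the same pattern: starting from $\omega_B$, we enlarge to $\Sp(n)\Sp(1)\times\SO(W)$-connections, and $T_D(\chi_V)\in W^*\otimes\so(W)$ is defined so that adding it cancels exactly those $W$-valued components of $\Theta_B$ not permitted by \eqref{eqn:Duchemin}; this yields $\Theta_D=\Theta_B-[\Theta_B^{0,2}]_W$, with uniqueness again following from injectivity of $\partial$ on the relevant summands of $T^*\otimes\so(W)$.

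Finally, for the second half of the theorem, suppose $P'$ is an $\Sp(n)\Sp(1)$-reduction of the given qc structure, compatible with the fixed metric on $\mathcal{D}$, that admits a connection satisfying \eqref{eqn:Biquard} (resp.\ \eqref{eqn:Duchemin}). Projecting the torsion to $\coker\partial_G$ and combining the constraints of \eqref{eqn:Biquard}/\eqref{eqn:Duchemin} with the description of $\coker\partial_G$ derived from \eqref{eqn:cokerKB}, I would deduce that the $G$-intrinsic torsion of $P'$ is $\Theta_0^G$, so $P'$ is a qcm structure by Definition~\ref{def:qcm}. Proposition~\ref{prop:metricdeterminescomplement} then gives uniqueness of $P'$, and the natural complement $P'\times_{\Sp(n)\Sp(1)}W$ must coincide with $W_B$ (resp.\ $W_D$) since condition (1) in each of the two cited theorems determines the complement from the connection. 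The main obstacle is the representation-theoretic verification that $T_B$ and $T_D$, as explicitly defined by their values on the irreducible summands of $S^2(EH)$, produce corrections whose $\partial$ matches the missing parts of $\partial(\chi_V)$; this amounts to computing $\partial$ on $W^*\otimes(\lie{sp}(n)+\lie{sp}(1))$ and $W^*\otimes\so(W)$ on the relevant highest weight vectors and reconciling them with Lemma~\ref{lemma:formulae}.
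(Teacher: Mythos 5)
Your treatment of the first half — constructing $\omega_B=\omega_{qcm}+T_B(\chi_V)$ and $\omega_D=\omega_B+T_D(\chi_V)$ and proving uniqueness on a \emph{fixed} qcm structure by analysing $\partial(A)$ for a tensorial correction $A$ — follows the paper's argument and is sound.

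The gap is in the second half. You propose to show that an arbitrary compatible $\Sp(n)\Sp(1)$-reduction $P'$ admitting a connection satisfying \eqref{eqn:Biquard} must be the canonical qcm one by ``projecting the torsion to $\coker\partial_G$ and combining the constraints with the description of $\coker\partial_G$''. But \eqref{eqn:Biquard} constrains components of the torsion of \emph{one particular connection}, not of the intrinsic torsion, and on an arbitrary reduction the component $[\Theta^{2,0}]_V$ mixes the gauge part $\partial(\eta_V)$, $\eta_V\in V^*\otimes(\lie{sp}(n)+\lie{sp}(1))$, with the intrinsic components $\Theta_1=\partial_1(\eta_1)$ and the $V^*\otimes\R$ piece $\delta_V$ of \eqref{eqn:SpnSp1decomp} — the latter being exactly what changes when the reduction changes. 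The map
\[V^*\otimes(\lie{sp}(n)+\lie{sp}(1))\oplus W_1\oplus V^*\otimes\R\to\Lambda^2V^*\otimes V,\qquad v\to[\partial(v)^{2,0}]_V\]
has a nontrivial kernel, isomorphic to $2EH+ES^3H$, so the condition $[\Theta^{2,0}]_V=0$ does \emph{not} by itself force $\eta_1=0$ and $\delta_V=0$: a nonzero intrinsic contribution could a priori be absorbed into the gauge freedom $\eta_V$. The paper closes this loophole by computing the kernel explicitly (it is spanned by $(\alpha_1,-\alpha_1,0)$, $(\beta_1,-\beta_1,0)$ and $(\alpha_2,\alpha_3,\,\cdot\,)$, and projects injectively onto the $W_1$-factor) and then invoking Theorem~\ref{thm:reductiontheorem}: for a qc structure, $\eta_1$ is confined to the $ES^3H$ generated by $2\beta_2+3\beta_1$, which meets the kernel's $W_1$-projection (whose $ES^3H$ part lies in $V^*\otimes\lie{sp}(1)$, i.e.\ is generated by $\beta_1$) only in zero. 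Only then does one obtain $\eta_1=0$, hence $(\eta_V,\eta_1,\delta_V)=0$, and — using Theorem~\ref{thm:reductiontheorem} once more together with the integrability hypothesis — $\Theta_2=0$, so that $P'$ is the canonical qcm reduction of Proposition~\ref{prop:metricdeterminescomplement}. Your outline mentions neither the kernel computation nor the role of Theorem~\ref{thm:reductiontheorem} and integrability, and without these the step does not go through.
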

\begin{proof}
Let $\omega_{qcm}$ be the qcm connection; its torsion is determined by $\chi_V$, $\chi_W$ as defined in Theorem~\ref{thm:qcmintrinsic}. Consider the connection
\[\omega_B=\omega_{qcm}+\eta_V+\eta_W, \quad \eta_V\in V^*\otimes(\lie{sp}(n)+\lie{sp}(1)), \quad \eta_W\in W^*\otimes(\lie{sp}(n)+\lie{sp}(1));\]
its torsion $\Theta_B$ satisfies 
\begin{gather*}
(\Theta_B)^{2,0}=\Theta_0 + \partial(\eta_V)^{2,0} , \quad (\Theta_B)^{0,2}=\partial(\chi_W)+\partial(\eta_W)^{0,2},\\
(\Theta_B)^{1,1}=\partial(\chi_V)+\partial(\eta_V)^{1,1}+\partial(\eta_W)^{1,1}.
\end{gather*}

The first equation in \eqref{eqn:Biquard} is equivalent to $\eta_V=0$, whereas the second equation means that
\[\partial(\chi_V)+\partial(\eta_W)^{1,1} \in  \partial(W^*\otimes(\lie{sp}(n)+\lie{sp}(1))^\perp).\]
There is a unique solution in $\eta_W$, proving existence and uniqueness of $\nabla_B$. Since $T_B$ satisfies
\[\partial( T_B(v)) + \partial (v) \in \partial(W^*\otimes(\lie{sp}(n)+\lie{sp}(1))^\perp), \quad v\in S^2(EH),\]
the solution is given by $\eta_W=T_B(\chi_V)$. Notice that $\partial \circ T_B+\partial$ is zero on the component isomorphic to $\R$, so $(\Theta_B)^{1,1}$ lies in $S^2ES^2H+\Lambda^2_0E$.

To determine  $\omega_D$, we think of the Lie algebra of  $\Sp(n)\Sp(1)\times\SO(W)$ as  $(\lie{sp}(n)+\lie{sp}(1)) + \so(W)$, with $\lie{sp}(1)$ contained diagonally in $\so(V)+\so(W)$ as usual. Accordingly, 
we can write an arbitrary   $\Sp(n)\Sp(1)\times\SO(W)$-connection as $\omega_D=\omega_{qcm}+\eta_V+\eta_W+ A$, where $\eta_V$ and $\eta_W$ are as in the first part of the proof and  $A$ is an $\so(W)$-valued tensorial $1$-form. The torsion is then
\[\begin{split}
(\Theta_D)^{2,0}&=\Theta_0 + \partial(\eta_V)^{2,0} ,\quad (\Theta_D)^{0,2}=\partial(\chi_W)+\partial(\eta_W)^{0,2}+\partial (A)^{0,2},\\
(\Theta_D)^{1,1}&= \partial(\chi_V)+\partial(\eta_V)^{1,1}+\partial(\eta_W)^{1,1} +\partial (A)^{1,1}.
   \end{split}
\]
The condition on $(\Theta_D)^{2,0}$ immediately implies that $\eta_V=0$; so, the condition on $(\Theta_D)^{1,1}$ reads
\[\partial (A)^{1,1}=0, \quad  \partial(\chi_V)+\partial(\eta_W)^{1,1} \in W^*\otimes (\lie{sp}(n)+\lie{sp}(1))^\perp.\]
Therefore $\eta_W=T_B(\chi_V)$. Imposing
\[0=[(\Theta_D)^{0,2}]_W=\partial(T_B(\chi_V))^{0,2}+(\partial A)^{0,2}\]
gives $A=T_D(\chi_V)$. It is now clear that the torsion $\Theta_D$ differs from $\Theta_B$ only in that the $\R$ component is zero.

In order to prove the uniqueness of the qcm structure, observe that a qc structure and a metric determine a qc $\Sp(n)\Sp(1)\ltimes \Hom(W,V)$-structure. Take an arbitrary reduction to $\Sp(n)\Sp(1)$, and assume it has a connection satisfying \eqref{eqn:Biquard}. Decomposing its torsion according to \eqref{eqn:SpnSp1decomp}, we find
\[[\Theta^{2,0}]_V=[(\Theta_*)^{2,0}]_V+[(\Theta_1)^{2,0}]_V+[(\Theta_4)^{2,0}]_V.\]
 Write 
\[
 \Theta_*=\partial(\eta_V+\eta_W), \quad 
\Theta_1=\partial_1(\eta_1), \quad \Theta_4=\partial(\delta_V+\delta_W),\]
with obvious notation; then
$(\eta_V,\eta_1,\delta_V)$ is in the kernel of 
\[ V^*\otimes(\lie{sp}(n)+\lie{sp}(1))\oplus W_1 \oplus V^*\otimes\R\to \Lambda^2V^*\otimes V, \quad v\to [\partial (v)^{2,0}]|_V.\]
This map is surjective with kernel  $2EH+ES^3H$. To identify these subspaces more precisely,  observe that $V^*\otimes(\lie{sp}(n)+\lie{sp}(1))$ intersects $W_1$ in $V^*\otimes\lie{sp}(1)$; thus, the kernel contains $(\alpha_1,-\alpha_1,0)$ and $(\beta_1,-\beta_1,0)$. The calculations  of Lemma~\ref{lemma:formulae} show that the kernel also contains
\[(\alpha_2,\alpha_3,v_1h_2\otimes(4\id_V+8\id_W));\]
in particular, the kernel projects injectively on $W_1$.

By  Theorem~\ref{thm:reductiontheorem}, $\eta_1$ lies in the  $ES^3H$ containing $2\beta_2+3\beta_1$, which intersects  $V^*\otimes\lie{sp}(1)$ trivially; hence, $\eta_1=0$. Thus, $\Theta_1=0$; by Theorem~\ref{thm:reductiontheorem} and integrability, this implies that $\Theta_2=0$, so $\Theta^B=\Theta_0^B$. In addition, $\delta_V$ is also forced to be zero, so $P$ is a qcm structure.
\end{proof}
This result shows that both the Duchemin and Biquard connections exist in all dimensions; moreover, they only differ by a component in $\R\subset W^*\otimes\so(W)$, which has the effect of killing $[\Theta^{0,2}]_W$.

The tensor $T_\xi$ that appears in the literature can be identified with  $\Theta^{1,1}$. It is customary to decompose $T_\xi$ as 
\[T_\xi=T^0_\xi + b_\xi, \quad b_\xi \in W^*\otimes \so(V), \quad T^0_\xi \in W^*\otimes S^2V.\]
Decomposing into irreducible modules,
\[b_\xi\in \Lambda^2_0E(S^4H+S^2H+\R), \quad T^0_\xi\in S^2E(S^4H+S^2H+\R) + \Lambda^2_0ES^2H + S^2H;\]
however, it follows from Theorem~\ref{thm:biquard} that $b_\xi$ is really contained in $\Lambda^2_0E$ and $T^0_\xi$ in $S^2ES^2H$. This is consistent with the results of \cite{Biquard} (see also  \cite[Proposition 2.4]{IvanovMinchevVassilev} and \cite[Proposition 3.1]{Duchemin:hypersurfaces}).

As an element of $S^2ES^2H+\Lambda^2_0E$, $T_\xi$ can be viewed as a traceless symmetric tensor; in fact, it can  be identified with the traceless part of $\chi_V$:
\begin{corollary}
\label{cor:TxiisChiV}
There is a linear map $S^2(EH)\to W^*\otimes (\lie{sp}(n)+\lie{sp}(1))^\perp$ with kernel $\R$ that maps $\chi_V$ to $T_\xi$.
\end{corollary}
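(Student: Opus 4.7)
The plan is to construct an $\Sp(n)\Sp(1)$-equivariant linear map $\phi\colon S^2(EH)\to W^*\otimes(\lie{sp}(n)+\lie{sp}(1))^\perp$ sending $\chi_V$ to $T_\xi$, and then determine its kernel by Schur's lemma applied to the three irreducible summands of $S^2(EH)$.

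I would extract the map $\phi$ directly from Theorem~\ref{thm:biquard}. Because $\omega_B=\omega_{qcm}+T_B(\chi_V)$, the change-of-connection formula gives
\[
T_\xi=\Theta_B^{1,1}=\partial(\chi_V)+\partial(T_B(\chi_V))^{1,1},
\]
and by Biquard's torsion condition \eqref{eqn:Biquard} this element lies in $\partial(W^*\otimes(\lie{sp}(n)+\lie{sp}(1))^\perp)$. The restriction of $\partial$ to $W^*\otimes\lie{gl}(V)$ is an isomorphism onto $\Lambda^{1,1}\otimes V$ (an instance of the isomorphisms appearing in the lemma of Section~\ref{sec:gstructures}), so there is a unique $\phi(\chi_V)\in W^*\otimes(\lie{sp}(n)+\lie{sp}(1))^\perp$ with the right image under $\partial$. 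Running the same construction with an arbitrary $v\in S^2(EH)$ in place of $\chi_V$ produces the linear equivariant map $\phi$, and by construction $\phi(\chi_V)=T_\xi$.

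Next I would apply Schur's lemma to the decomposition
\[
S^2(EH)=S^2ES^2H+\Lambda^2_0E+\R
\]
into three pairwise non-isomorphic irreducible summands. Theorem~\ref{thm:biquard} asserts that $\Theta_B^{1,1}\in S^2ES^2H+\Lambda^2_0E$, so the image of $\phi$ contains no trivial summand; by Schur $\R\subseteq\ker\phi$.

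The final step is to verify that $\phi$ is non-zero on the remaining two summands. On $\Lambda^2_0E$, equivariance forces $T_B$ to vanish (as noted in the definition of $T_B$), so the defining relation reduces to $\partial(\phi(v))=\partial(v)$, which is non-zero because $\partial$ is injective on $V^*\otimes\Hom(W,V)$. On $S^2ES^2H$, I would test $\phi$ on the highest weight vector $v_1h_2\otimes((w^2+iw^3)\otimes v_1h_1+iw^1\otimes v_1h_2)$ using the explicit value of $T_B$ recorded in the definition, and check that the resulting element of the perp component is non-trivial; this short highest weight calculation is the technical heart of the argument. Combining these facts gives $\ker\phi=\R$.
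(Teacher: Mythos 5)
Your proposal is correct and follows essentially the same route as the paper, whose entire proof is the reduction to checking that $v\mapsto\partial(v)+\partial(T_B(v))$ has kernel $\R$ on $S^2(EH)$; you make the same reduction (via the isomorphism $\partial\colon W^*\otimes\gl(V)\to V^*\wedge W^*\otimes V$) and then verify the kernel summand by summand, deferring the $S^2ES^2H$ highest-weight check exactly as the paper does. The one loose point is your Schur argument for $\R\subseteq\ker\phi$: it should rest on the fact that the target $W^*\otimes(\lie{sp}(n)+\lie{sp}(1))^\perp$ contains no trivial summand (equivalently, on the observation already made in the proof of Theorem~\ref{thm:biquard} that $\partial\circ T_B+\partial$ vanishes on the component isomorphic to $\R$), not on the location of the single value $\Theta_B^{1,1}$.
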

\begin{proof}
It suffices to check that the map 
\[S^2(EH)\to \Lambda^2T^*\otimes T , \quad \chi_V \to \partial(\chi_V)+T_B(\chi_V)\]
has kernel $\R$.
\end{proof}

\begin{example}
Theorem~\ref{thm:biquard} also tells us how to compute the torsion of the Biquard connection from that of the qcm connection. For instance, the qcm connection  on the Lie group \eqref{eqn:CFSExample} satisfies
\[\chi_V=\frac12 (e^{1}\otimes e^1 +e^4\otimes e^4-e^2\otimes e^2-e^3\otimes e^3)+\frac14e^a\otimes e_a, \quad \chi_W=0;\]
therefore the $S^2ES^2H$ component of torsion of the Biquard connection is the projection to $W\otimes S^2V\subset \Lambda^{1,1}\otimes V$ of 
\[\partial(\frac12 (e^{1}\otimes e^1 +e^4\otimes e^4-e^2\otimes e^2-e^3\otimes e^3))\]
i.e.
\[(\Theta_B)^{1,1}=-\frac12 e^5\wedge (e^1\odot e^2-e^3\odot e^4) -\frac12 e^6\wedge(e^1\odot e^3-e^4\odot e^2).\]
The $\R$ component is 
\[\partial(\frac12w^s\otimes\omega_s)^{0,2}=w_s\hook w^{123}\otimes w_s;\]
finally, the $EH+ES^3H$ component is zero because $\chi_W$ is zero, so
\[(\Theta_B)^{0,2}=\sum_s w_s\hook w^{123}\otimes w_s.\]
The torsion of the Duchemin connection only differs in that $\Theta_D^{0,2}=0$.  
\end{example}

It is not surprising that the curvatures of the Biquard and qcm connections are related by a formula involving the torsion. The remarkable fact is that the $(2,0)$ part of this curvature only differs by a term that depends linearly on $\chi_V$:

\begin{corollary}
\label{cor:Omegabiq}
On a qcm structure, the Biquard connection has curvature
\[\Omega_B=\Omega_{qcm} + D_{qcm} T_B(\chi_V)+\frac12[T_B(\chi_V),T_B(\chi_V)].\]
In particular, 
\[(\Omega_B)^{2,0}= (\Omega_{qcm})^{2,0}+\Theta_0\hook T_B(\chi_V)\in \tilde R_1 + (S^2ES^2H +\R),\]
where the two summands intersect trivially.
\end{corollary}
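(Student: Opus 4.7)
The plan is to deduce both identities from the general rule for how curvature transforms when a connection is shifted by a tensorial $1$-form, combined with the information about the torsion of the qcm connection from Theorem~\ref{thm:qcmintrinsic} and the compatibility from Theorem~\ref{thm:biquard}. By Theorem~\ref{thm:biquard}, on a qcm structure the Biquard connection is $\omega_B=\omega_{qcm}+A$ with $A=T_B(\chi_V)$ a tensorial $1$-form valued in $\lie{sp}(n)+\lie{sp}(1)$. The general formula $\Omega_{\omega+A}=\Omega_\omega+D_\omega A+\frac12[A,A]$ yields the first displayed identity, with $D_{qcm}$ denoting exterior covariant derivative with respect to $\omega_{qcm}$; no qc-specific input is needed.

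To extract the $(2,0)$ part, I would sort the three contributions by bidegree. The map $T_B$ lands in $W^*\otimes(\lie{sp}(n)+\lie{sp}(1))$, so $A$ is pure of bidegree $(0,1)$; consequently $[A,A]$ is pure of bidegree $(0,2)$ and contributes nothing. For $D_{qcm}A$, apply equation \eqref{eqn:Dalphanablaalpha} in degree $k=1$:
\[D_{qcm}A=\langle\nabla A,\theta\rangle+\Theta_{qcm}\hook A.\]
Since $\nabla A$ retains the $W^*$ bidegree of $A$, the pairing $\langle\nabla A,\theta\rangle$ lies in $T^*\wedge W^*\otimes(\lie{sp}(n)+\lie{sp}(1))$ and has no $(2,0)$ component. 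By Theorem~\ref{thm:qcmintrinsic}, $\Theta_{qcm}=\Theta_0+\partial(\chi_V)+\partial(\chi_W)$; the two correction terms have $T$-values in $V$ (because $\chi_V$ and $\chi_W$ are valued in $EH\subset\Hom(W,V)=W^*\otimes V$), and their interior product with a $W^*$-valued $1$-form vanishes identically. Hence $\Theta_{qcm}\hook A=\Theta_0\hook A$, which is of pure bidegree $(2,0)$, giving the second displayed formula.

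For the membership statement, $(\Omega_{qcm})^{2,0}\in S^4E+S^2(EH)=\tilde R_1$ follows directly from Theorem~\ref{thm:qcmintrinsic}. Because $T_B$ kills $\Lambda^2_0E\subset S^2(EH)$, the equivariant operator $v\mapsto\Theta_0\hook T_B(v)$ factors through $S^2ES^2H+\R$, identifying the isomorphism type of its image. The step that requires real work, and which I view as the main obstacle, is verifying that this image is transverse to the $S^2ES^2H+\R$ sitting inside $\tilde R_1$. I would test this on highest-weight vectors: on the scalar generator $\sum_a e_a\otimes w^s\otimes e_a\hook\omega_s$ of $\R\subset S^2(EH)$ one finds $T_B=2w^s\otimes\omega_s$ and hence $\Theta_0\hook T_B(\chi_V)=2\sum_s\omega_s\otimes\omega_s$, a concrete element of $\Lambda^{2,0}\otimes\lie{sp}(1)$; the trivial summand of $\tilde R_1$, characterized via Lemma~\ref{lemma:Kcurvature} by landing in $s(T^*\otimes\widetilde{EH})$ under $\delta$, does not contain this element. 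A parallel computation on a highest-weight vector of $S^2ES^2H\subset S^2(EH)$ handles the remaining transversality, completing the proof.
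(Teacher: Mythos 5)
Your argument follows the paper's proof essentially step for step: the standard transformation formula for the curvature under a shift of connection, the bidegree bookkeeping showing that $[A,A]$ is of type $(0,2)$ and that $(D_{qcm}A)^{2,0}=\Theta_0\hook T_B(\chi_V)$ lands in $S^2ES^2H+\R$, and the reduction of the transversality claim to checking that $\delta$ applied to $\omega_s\otimes\omega_s$ and to a highest weight vector of the image copy of $S^2ES^2H$ does not land in $s(V^*\otimes\widetilde{EH})$. The only difference is that the paper actually carries out that last highest-weight-vector computation (explicitly for the generator $(\omega_2+i\omega_3)\otimes v_1h_2\wedge v_1h_1$), whereas you correctly identify it as the substantive step but leave it as a sketch.
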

\begin{proof}
The first formula is obvious. Now $[T_B(\chi_V),T_B(\chi_V)]$ has type $(0,2)$, and
 \[(D_{qcm}T_B)^{2,0} = \Theta^{2,0}\hook  T_B(\chi_V) =\Theta_0\hook T_B(\chi_V)\]
takes values in $S^2ES^2H+\R$; the relevant modules contain
\begin{gather*}
(\omega_2+i\omega_3)\otimes v_1h_2\wedge v_1h_1, \quad 
\omega_s\otimes \omega_s.
\end{gather*}
In order to prove that $S^2ES^2H$ is not contained in $\tilde R_1$,  it is sufficient to show that
\[ \delta((\omega_2+i\omega_3)\otimes v_1h_2\wedge v_1h_1) = v_jh_2\wedge v_{n+j}h_2\wedge (v_1h_2\otimes v_{1}h_1-v_{1}h_1\otimes v_1h_2)\]
does not lie in $s(V^*\otimes\widetilde{EH})$, which follows from
\begin{multline*}
s(v_1h_2\otimes v_1h_2)=\frac12 v_1h_2\wedge  (v_{n+j}h_1\wedge v_jh_2+v_{n+j}h_2\wedge v_jh_1) \otimes v_1h_2\\
 +v_1h_2\wedge v_jh_2\wedge v_{n+j}h_2  \otimes  v_1h_1
+iv_1h_2\wedge v_1h_1\wedge (w^1\otimes (w_2+iw_3)-(w^2+iw^3)\otimes w_1).
\end{multline*}
A similar computation shows that $\delta(\omega_s\otimes\omega_s)$ does not belong to the image of $s$.
\end{proof}

We can now use results from \cite{IvanovMinchevVassilev} to give a geometric characterization of the Ricci tensors.
\begin{corollary}
\label{cor:einsteiniseinstein}
On a qcm manifold, the component $\chi_V$ of the intrinsic torsion, the  Ricci tensor of the Biquard connection and the  Ricci tensor of the qcm connection coincide up to linear equivariant automorphisms of $S^2(V)$.
\end{corollary}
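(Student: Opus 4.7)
The plan is to exhibit both Ricci tensors as images of $\chi_V$ under equivariant endomorphisms of $S^2(V^*)$, and then to invoke Schur's lemma. Identifying $V$ with $V^*$ through the metric, $S^2(V^*)\cong S^2(EH)=S^2ES^2H\oplus\Lambda^2_0E\oplus\R$ is multiplicity-free under $\Sp(n)\Sp(1)$; hence any equivariant endomorphism acts as a scalar on each irreducible summand, and is an automorphism exactly when each of the three scalars is nonzero.

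For the qcm Ricci, the remark following Theorem~\ref{thm:qcmintrinsic} already identifies $\ric_{qcm}$ with $\chi_V$; more formally, the Ricci contraction $r\colon\tilde R_1\to S^2(V^*)$ has $S^4E$ as kernel and is surjective, and since Theorem~\ref{thm:qcmintrinsic} gives $\Omega_{qcm}^{2,0}-f_1(\chi_V)\in S^4E$ with $f_1$ equivariant and injective, the composition $\phi_{qcm}:=r\circ f_1$ is an equivariant automorphism of $S^2(V^*)$ with $\ric_{qcm}=\phi_{qcm}(\chi_V)$.

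For the Biquard Ricci, applying Corollary~\ref{cor:Omegabiq} gives
\[(\Omega_B)^{2,0}-(\Omega_{qcm})^{2,0}=\Theta_0\hook T_B(\chi_V)\in S^2ES^2H+\R,\]
so taking Ricci contractions yields $\ric_B=\phi_B(\chi_V)$ for an equivariant endomorphism $\phi_B$ of $S^2(V^*)$ which differs from $\phi_{qcm}$ only on the $S^2ES^2H$ and $\R$ summands, since $T_B$ vanishes on $\Lambda^2_0E$. In particular $\phi_B$ automatically acts as an automorphism on $\Lambda^2_0E$.

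The main obstacle is to verify that the Schur scalars of $\phi_B$ on $S^2ES^2H$ and $\R$ are nonzero. For the $\R$ component this amounts to the qc scalar curvature of the Biquard connection being a nontrivial multiple of the scalar part of $\chi_V$, which may be read off the sphere example of Section~\ref{sec:example}, where $\chi_V$ is a nonzero constant scalar and $\ric_B$ does not vanish. For the $S^2ES^2H$ component, Corollary~\ref{cor:TxiisChiV} identifies the $S^2ES^2H$-component of $T_\xi$ with that of $\chi_V$ up to a nonzero scalar; combined with the classical identification of $T_\xi^0$ with the traceless symmetric part of $\ric_B$ (see \cite{Biquard,IvanovMinchevVassilev}), this forces the last Schur scalar of $\phi_B$ to be nonzero and completes the proof.
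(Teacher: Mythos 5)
Your proposal is correct and takes essentially the same route as the paper's proof: the qcm Ricci is identified with $\chi_V$ via the remark following Theorem~\ref{thm:qcmintrinsic}, the Biquard Ricci is expressed as an equivariant image of $\chi_V$ via Corollary~\ref{cor:Omegabiq}, the traceless symmetric part is controlled through the identification of $T_\xi$ with the traceless Biquard Ricci together with Corollary~\ref{cor:TxiisChiV}, and the scalar part is settled by the sphere example. The only (harmless) variation is that you dispose of the $\Lambda^2_0E$ summand by observing that $T_B$ vanishes there, so $\phi_B$ agrees with $\phi_{qcm}$ on that component, whereas the paper folds $\Lambda^2_0E$ into the $T_\xi$ argument alongside $S^2ES^2H$.
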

\begin{proof}
By Theorem~\ref{thm:qcmintrinsic} and subsequent remarks, the Ricci of the qcm connection can be identified with $\chi_V$. By Corollary~\ref{cor:Omegabiq}, there is a linear equivariant endomorphism $f$ of $S^2(EH)$ that maps $\chi_V$ to the Ricci of the Biquard connection. 

To prove that $f$ is an isomorphism, observe that by \cite{IvanovMinchevVassilev} the traceless Ricci of the Biquard connection can be identified with the component of $T_\xi$ in  $S^2ES^2H+\Lambda^2_0E$, which  can in turn be identified with  $\chi_V$ by Corollary~\ref{cor:TxiisChiV}. This shows that  $f$ is injective on $S^2ES^2H+\Lambda^2_0E$. Suppose that, for some $n$, $f$ is zero on the component $\R$ of $S^2(EH)$. This implies that on any integrable qc manifold of dimension $4n+3$, the Biquard connection has qc scalar curvature equal to zero.  The example of the sphere (see Section~\ref{sec:example} or \cite{Biquard}), which exists in all dimensions, shows that is not true.
\end{proof}

% \bibliographystyle{plain}
% \bibliography{qc}

\small\noindent Dipartimento di Matematica e Applicazioni, Universit\`a di Milano Bicocca, via Cozzi 55, 20125 Milano, Italy.\\
\texttt{diego.conti@unimib.it}

\end{document}